\definecolor{plurp}{RGB}{255,0,255}
\definecolor{job}{RGB}{200,65,0}
\definecolor{kiyoshi}{RGB}{0,0,200}
\newtheorem{introthm}{Theorem}
\newtheorem{theorem}{Theorem}[section] 
\newtheorem{proposition}[theorem]{Proposition} 
\newtheorem{lemma}[theorem]{Lemma} 
\newtheorem{corollary}[theorem]{Corollary} 
\theoremstyle{definition}
\newtheorem{definition}[theorem]{Definition} 
\newtheorem{remark}[theorem]{Remark} 
\newtheorem{notation}[theorem]{Notation} 
\DeclareMathOperator{\Hom}{Hom}
\DeclareMathOperator{\modd}{mod}
\DeclareMathOperator{\Ext}{Ext}
\DeclareMathOperator{\dd}{d\!}
\newcommand{\var}{\mathsf{var}}
\newcommand{\textdef}[1]{\textbf{#1}}
\newcommand{\ZZ}{\mathbb{Z}}
\newcommand{\NN}{\mathbb{N}}
\newcommand{\RR}{\mathbb{R}}
\newcommand{\Rbar}{\overline{\RR}}
\newcommand{\veck}{\text{\normalfont vec}(\Bbbk)}
\newcommand{\Ind}{\text{\normalfont Ind}}
\newcommand{\Npi}{\mathbf{N}_\pi}
\newcommand{\hyper}{\mathfrak{h}^2}
\newcommand{\Meas}{\mathcal{M}}
\newcommand{\geo}{\text{-}}
\newcommand{\minn}[1]{{#1}_{\min}}
\newcommand{\maxx}[1]{{#1}_{\max}}
\newcommand{\modr}{\modd^{\text{r}}}
\newcommand{\Indr}{\Ind^{\text{r}}}
\newcommand{\fpc}{\text{fpc}}
\title[Continuous Stability of Type $\mathbb{A}$ and Measured Laminations]{Continuous Stability Conditions of Type $\mathbb{A}$ and Measured Laminations of the Hyperbolic Plane}
\author{Kiyoshi Igusa}
\address{Department of Mathematics, Brandeis University, Waltham, Massachusetts, USA}
\email{igusa@brandeis.edu}
\author{Job Daisie Rock}
\address{Department of Mathematics W16, Ghent University, Ghent, East Flanders, Belgium}
\email{job.rock@ugent.be}
\date{28 February 2023}
\begin{document}

\maketitle
\begin{center}
\emph{Dedicated to Idun Reiten for her kind support and encouragement}
\end{center}

\tableofcontents

\begin{abstract} We introduce stability conditions (in the sense of King) for representable modules of continuous quivers of type $\mathbb{A}$ 
along with a special criteria called the four point condition.
The stability conditions are defined using a generalization of $\delta$ functions, called half-$\delta$ functions.
We show that for a continuous quiver of type $\mathbb{A}$ with finitely many sinks and sources, the stability conditions satisfying the four point condition are in bijection with measured laminations of the hyperbolic plane.
Along the way, we extend an earlier result by the first author and Todorov regarding continuous cluster categories for linear continuous quivers of type $\mathbb{A}$ and laminations of the hyperbolic plane to all continuous quivers of type $\mathbb{A}$ with finitely many sinks and sources. We also give a formula for the continuous cluster character.
 \end{abstract}

\newcommand{\undim}{\underline\dim}

\section*{Introduction}

\subsection*{History}
The type of stability conditions in the present paper were introduced by King in order to study the moduli space of finitely generated representations of a finite-dimensional algebra \cite{K94}.

There is recent work connecting stability conditions to wall and chamber structures for finite-dimensional algebras \cite{BST19} and real Grothendieck groups \cite{A21}.
There is also work studying the linearity of stability conditions for finite-dimensional algebras \cite{I20}.

In 2015, the first author and Todorov introduced the continuous cluster category for type $\mathbb{A}$ \cite{IT15}.
More recently, both authors and Todorov introduced continuous quivers of type $\mathbb{A}$ and a corresponding weak cluster category \cite{IRT22,IRT22b}.
The second author also generalized the Auslander--Reiten quiver of type $\mathbb{A}_n$ to the Auslander--Reiten space for continuous type $\mathbb{A}$ and a geometric model to study these weak cluster categories \cite{R19+,R22}.

\subsection*{Contributions and Organization}
In the present paper, we generalize stability conditions, in the sense of King, to continuous quivers of type $\mathbb{A}$.
In Section~\ref{sec:finite case} we recall facts about stability conditions and reformulate them for our purposes.
In Section~\ref{sec:continuous stability conditions} we recall continuous quivers of type $\mathbb{A}$, representable modules, and then introduce our continuous stability conditions.

At the beginning of Section~\ref{sec:half delta and red-blue pairs} we define a half-$\delta$ function, which can be thought of as a Dirac $\delta$ function that only exists on the ``minus side'' or ``plus side'' of a point.
We use the half-$\delta$ functions to define useful functions (Definition~\ref{def:useful}), which are equivalent to functions with bounded variation but better suited to our purposes.
Then we define a stability condition as an equivalence class of pairs of useful functions with particular properties, modulo shifting the pair of functions up and down by a constant (Definitions~\ref{def:red-blue function pair}~and~\ref{def:stability condition}).

We use some auxiliary constructions to define a semistable module (Definition~\ref{def:stable module}).
Then we recall $\Npi$-compatibility (Definition~\ref{def:Npi compatible}), which can be thought of as the continuous version of rigidity in the present paper.
We link stability conditions to maximally $\Npi$-compatible sets using a criteria called the four point condition (Definition~\ref{def:four point condition}).
By $\mathcal{S}_\fpc(Q)$ we denote the set of stability conditions of a continuous quiver $Q$ of type $\mathbb{A}$ that satisfy the four point condition.

\begin{introthm}[Theorem~\ref{thm:four point equivalent to cluster}]\label{thm:A}
    Let $Q$ be a continuous quiver of type $\mathbb{A}$ with finitely many sinks and sources and let $\sigma\in\mathcal{S}(Q)$.
    Then the following are equivalent.
    \begin{itemize}
        \item $\sigma\in\mathcal{S}_\fpc(Q)$.
        \item The set of $\sigma$-semistable indecomposables is maximally $\Npi$-compatible.
    \end{itemize}
\end{introthm}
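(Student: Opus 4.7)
The plan is to prove the two implications separately, treating the four point condition as a combinatorial/geometric constraint on the endpoint data of the red-blue pair defining $\sigma$, and translating it into a pairwise compatibility statement about indecomposables. Throughout I would use the correspondence between representable indecomposables and ``intervals'' in $Q$, so that semistability is decided by the values of the red and blue useful functions at (or near) the endpoints, and $\mathbf{N}_\pi$-compatibility is a combinatorial relation on pairs of such intervals.

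For the forward direction ($\sigma\in\mathcal{S}_\fpc(Q)$ implies semistables are maximally $\Npi$-compatible), I would first show pairwise compatibility. Pick two $\sigma$-semistable indecomposables $M_a$ and $M_b$; their semistability is controlled by the red and blue useful functions at the four endpoints of the underlying intervals. Assuming for contradiction that $M_a$ and $M_b$ are not $\Npi$-compatible, I would show that the configuration of their endpoints is exactly one of the forbidden crossings that the four point condition of Definition~\ref{def:four point condition} rules out: i.e., the values of the red and blue functions at these four points would force a strict inequality that contradicts both $M_a$ and $M_b$ being semistable. Finiteness of the sinks and sources is used here to guarantee that I can locally reduce to the standard linear case in a neighborhood of the four relevant points, so that Section~\ref{sec:finite case} applies.

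For maximality, I would take an indecomposable $N$ that is $\Npi$-compatible with every $\sigma$-semistable indecomposable, and use this to force $N$ itself to be $\sigma$-semistable. The idea is to exhibit, for each candidate way $N$ could fail to be semistable, an explicit $\sigma$-semistable indecomposable whose interval crosses $N$'s in a $\Npi$-incompatible way; the existence of such test objects comes from choosing intervals tailored to the sign changes of the red-blue pair near $N$'s endpoints, which is where the pair-of-useful-functions formalism of Definitions~\ref{def:red-blue function pair}--\ref{def:stability condition} does the real work.

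For the reverse direction, I would argue the contrapositive: if the four point condition fails at some quadruple of points, I will build two $\sigma$-semistable indecomposables $M_a$, $M_b$ whose underlying intervals have precisely the crossing pattern that the failed four point condition makes possible, and I will show directly that this pattern violates $\Npi$-compatibility, contradicting maximality. The main obstacle I anticipate is not the logical skeleton above but the careful endpoint bookkeeping: the useful-function/half-$\delta$ formalism allows red and blue values to jump independently at a point, and I must track which of the four ``sides'' at each endpoint controls semistability for each of $M_a, M_b$. Once that bookkeeping is aligned with the statement of the four point condition, the equivalence should fall out from comparing inequalities at the four endpoints.
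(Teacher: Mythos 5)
Your overall skeleton matches the paper's proof exactly: forward direction by showing pairwise compatibility and then maximality by exhibiting a crossing semistable for any non-semistable module, reverse direction by contrapositive, extracting incompatible semistables from a failed four-point configuration, with Lemma~\ref{lem:Npi compatiblity as colors} doing the translation between $\Npi$-compatibility and crossing patterns of intervals.

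The step that is genuinely under-supplied is the \emph{existence} of the auxiliary semistable $M_I$ in the maximality argument. You propose to get this by locally reducing to the finite linear case of Section~\ref{sec:finite case}, using finiteness of sinks and sources. That is not what the paper does, and it would not by itself justify the construction. The difficulty is that producing a $\sigma$-semistable $M_I$ whose interval crosses $J$ incompatibly requires, concretely, that $R$ attains its supremum (and $B$ its infimum) on relevant closed subintervals, so that a horizontal chord at height $\maxx{R}(\inf J)+\varepsilon$ (or at a realized local maximum) actually touches the graph at two points $w<\min\widehat{J}$ and $z>\max\widehat{J}$. In a discrete quiver this is automatic; for a useful function with countably many half-$\delta$ jumps accumulating near a point it is a real topological assertion, not a consequence of the finite picture. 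The paper isolates exactly this content in Lemma~\ref{lem:local max and local min} (closedness of $\mathcal{G}(R)\cap([a,b]\times[c,d])$, hence attained local extrema) and invokes it at every construction of a test chord. Your proposal needs to appeal to that lemma (or prove an equivalent attainment statement) rather than a reduction to the finite case. A smaller point: in the pairwise step the contradiction is not with semistability directly but with the four point condition itself — the endpoint inequalities coming from incompatibility plus semistability force the two stability chords onto a single horizontal level, and then all four endpoints lie on that one chord, violating Definition~\ref{def:four point condition}.
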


In Section~\ref{sec:continuous tilting section} we define a continuous version of tilting.
That is, for a continuous quiver $Q$ of type $\mathbb{A}$ we define a new continuous quiver $Q'$ of type $\mathbb{A}$ together with an induced map on the set of indecomposable representable modules.
This is not to be confused with reflection functors for continuous quivers of type $\mathbb{A}$, introduced by Liu and Zhao \cite{LZ22}.
For each stability condition $\sigma$ of $Q$ that satisfies the four point condition, we define a new stability condition $\sigma'$ of $Q'$ (Definition~\ref{def:flipped functions}).
We show that continuous tilting induces a bijection on indecomposable representable modules, preserves $\Npi$-compatibility, and incudes a bijection on stability conditions for $Q$ and $Q'$ that satisfy the four point condiiton.
Denote by $\modr(Q)$ the category of representable modules over $Q$.

\begin{introthm}[Theorems~\ref{thm: continuous tilting gives bijection on max compatible sets}~and~\ref{thm:stability tilting}]\label{thm:B}
Let $Q$ and $Q'$ be continuous quivers of type $\mathbb{A}$ with finitely many sinks and sources.
Continuous tilting yields a triple of bijections: $\phi$, $\Phi$, and $\Psi$.
\begin{itemize}
    \item A bijection $\phi:\Ind(\modr(Q))\to\Ind(\modr(Q'))$.
    \item A bijection $\Phi$ from maximal $\Npi$-compatible sets of $\modr(Q)$ to maximal $\Npi$-compatible sets of $\modr(Q')$. Furthermore if $\mu:T\to T'$ is a mutation then so is $\Phi(\mu):\Phi T\to \Phi T'$ given by $\phi(M_I)\mapsto \phi(\mu(M_I))$.
    \item A bijection $\Psi:\mathcal{S}_\fpc(Q)\to\mathcal{S}_\fpc(Q')$ such that if $T$ is the set of $\sigma$-semistable modules then $\Phi(T)$ is the set of $\Psi(\sigma)$-semistable modules.
\end{itemize}
\end{introthm}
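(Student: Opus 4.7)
The plan is to construct $\phi$ directly as the bijection on indecomposables induced by the continuous tilting operation introduced in Section~\ref{sec:continuous tilting section}; bijectivity is built into the definition, but can in any case be verified by exhibiting an inverse tilting that returns $Q'$ to $Q$. Since the indecomposable representables of $\modr(Q)$ correspond to intervals (or arcs) in the Auslander--Reiten space, $\phi$ should admit a concrete geometric description as a reshaping of these arcs across the chosen tilting datum, and I would use that geometric picture throughout the remainder of the proof.

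For the second bullet, I would first prove the pairwise statement: $M, N \in \Ind(\modr(Q))$ are $\Npi$-compatible if and only if $\phi(M), \phi(N) \in \Ind(\modr(Q'))$ are $\Npi$-compatible. Once this is in hand, setting $\Phi(T) := \phi(T)$ is well defined on maximal $\Npi$-compatible sets and is a bijection because $\phi$ itself is. For the mutation statement, if $\mu \colon T \to T'$ is a mutation exchanging $M_I \in T$ for $\mu(M_I) \in T'$, then by the pairwise preservation the pair $\phi(M_I), \phi(\mu(M_I))$ plays the analogous exchange role between $\Phi(T)$ and $\Phi(T')$, so the formula $\Phi(\mu) \colon \phi(M_I) \mapsto \phi(\mu(M_I))$ is forced to be a mutation in its own right.

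For the third bullet, given $\sigma \in \mathcal{S}_\fpc(Q)$, I set $\Psi(\sigma) := \sigma'$ using the flipped red-blue function pair of Definition~\ref{def:flipped functions}. The key verification is that $M$ is $\sigma$-semistable if and only if $\phi(M)$ is $\sigma'$-semistable; this should reduce to a bookkeeping translation of the defining inequalities, since the flipping operation is engineered to convert the semistability test across the tilting. Once this is established, the set of $\Psi(\sigma)$-semistables equals $\Phi$ applied to the set of $\sigma$-semistables, which by Theorem~A is maximal $\Npi$-compatible; hence its image under $\Phi$ is also maximal $\Npi$-compatible, and Theorem~A applied in $Q'$ gives $\Psi(\sigma) \in \mathcal{S}_\fpc(Q')$. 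Bijectivity of $\Psi$ then follows from inverting the flipping operation modulo the shift equivalence used to define stability conditions.

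The hardest step will be pairwise preservation of $\Npi$-compatibility under $\phi$. Compatibility is a fine-grained condition depending on the precise positions of the sinks and sources of $Q$ relative to the supports of the two modules, and continuous tilting permutes or reshapes these positions. I expect to handle this by working in the geometric model on the Auslander--Reiten space, where $\Npi$-compatibility translates into a non-crossing or linking condition on arcs and tilting corresponds to a concrete geometric transformation under which the condition is visibly preserved; this geometric viewpoint should also explain why the flipped functions in Definition~\ref{def:flipped functions} were designed the way they were, and make the semistability translation in the third bullet essentially tautological.
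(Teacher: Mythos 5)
Your overall decomposition is the same as the paper's: construct $\phi$ from the tilting map $\overline{\mathfrak t}$, establish pairwise preservation of $\Npi$-compatibility, set $\Phi(T)=\phi(T)$, define $\Psi$ via the flipped red-blue pair of Definition~\ref{def:flipped functions}, verify the semistable sets correspond, and invoke Theorem~A to land in $\mathcal{S}_\fpc(Q')$. However, your estimate of where the difficulty lies is inverted, and one of your proposed substitutions carries a risk of circularity.

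For pairwise $\Npi$-compatibility preservation you propose to pass to a geometric arc model on the Auslander--Reiten space or disk, where compatibility becomes a noncrossing condition and tilting is a visible geometric transformation. The paper instead proves Lemma~\ref{lem: continuous tilting preserves compatibility} by a short combinatorial case analysis using Lemma~\ref{lem:Npi compatiblity as colors}: sort the endpoints $\{a,b,c,d\}$ by how many lie in the tilted region $K$ versus the fixed region $Z$ (six cases), and track how the order-reversal and red-to-blue color flip on $K$ interact with the color criterion. Your geometric route is plausible, but the measured-lamination correspondence for arbitrary orientations (Corollary~\ref{cor:measured laminations and straight four point conditions for all orientations}) is established \emph{after}, and \emph{via}, the tilting theorems; the Igusa--Todorov correspondence you could safely invoke covers only the straight descending orientation. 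You would therefore need a self-contained argument that $\overline{\mathfrak t}$ preserves noncrossing, which in practice reduces to the same endpoint-by-endpoint case analysis. In either case, this step is not where the real work is.

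Where you write that the semistability verification for $\Psi$ ``should reduce to a bookkeeping translation'' and is ``essentially tautological'' is the gap. Definition~\ref{def:flipped functions} does not produce $B'$ by a pointwise transplant of $R$; it first introduces the nonincreasing envelope $H$ that flattens the islands of $R$, then the island-flipping transformation $\widetilde R = H + H_+ - R$, and only then reflects via $\mathfrak t$. Before one can even compare semistable sets, one must verify that $(R',B')$ is a legitimate red-blue function pair, which requires Lemma~\ref{lem: B' has same variation as R} (variation is preserved), Lemma~\ref{lem: continuous at x iff local variation is 0} together with Proposition~\ref{prop: local variation at a is u+(a) + u-(a)} (discontinuity data $u^\pm$ match), and Lemma~\ref{lem: B'=min B'} (the sign conditions making $B'(b) = \minn{B'}(b)$). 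The semistability equivalence itself then splits into three cases according to whether both chord endpoints fall in $Z$, both in $K$, or one in each. The mixed case requires the height relation $h' = 2H(b) - h$ and invokes the four point condition to exclude extra touching points, and the both-in-$K$ case needs Remark~\ref{rem: when z is in an island} to identify which points lie in islands for $R$ versus $\widetilde R$. None of this is automatic from the definition. If you write out the proof along your plan, expect this third bullet, not the second, to be the bulk of the argument.
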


In Section~\ref{sec:measured laminations} we define a measured lamination to be a lamination of the (Poincar\'e disk model of the) hyperbolic plane together with a particular type of measure on the set of geodesics (Definition~\ref{def:measured lamination}).
We denote the Poincar\'e disk model of the hyperbolic plane by $\hyper$.
Then we recall the correspondence between laminations of $\hyper$ and maximally $\Npi$-compatible sets of indecomposable representable modules over the straight descending continuous quiver of type $\mathbb{A}$, from the first author and Todorov (Theorem~\ref{thm:igusa todorov} in the present paper) \cite{IT15}.
We extend this correspondance to stability conditions that satisfy the four point condition and measured laminations (Theorem~\ref{thm:measured laminations and straight four point conditions}).
Combining this with Theorems~\ref{thm:A}~and~\ref{thm:B}, we have the last theorem.

\begin{introthm}[Corollary~\ref{cor:measured laminations and straight four point conditions for all orientations}]\label{thm:C}
    Let $Q$ be a continuous quiver of type $\mathbb{A}$ and $\mathcal{L}$ the set of measured laminations of $\hyper$.
    There are three bijections: $\phi$, $\Phi$, and $\Psi$.
    \begin{itemize}
        \item The bijection $\phi$ from $\Ind(\modr(Q))$ to geodesics in $\hyper$.
        \item The bijection $\Phi$ from maximally $\Npi$-compatible sets to (unmeasured) laminations of $\hyper$ such that, for each maximally $\Npi$-compatible set $T$, $\phi|_T$ is a bijection from the indecomposable modules in $T$ to the geodesics in $\Phi(T)$.
        \item The bijection $\Psi:\mathcal{S}_\fpc(Q)\to \mathcal{L}$ such that if $T$ is the set of $\sigma$-semistable modules then $\Phi(T)$ is the set of geodesics in $\Psi(\sigma)$.
    \end{itemize}
\end{introthm}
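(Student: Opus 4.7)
The plan is to bootstrap from the straight descending case, where Theorem~\ref{thm:measured laminations and straight four point conditions} already supplies the three bijections, to an arbitrary orientation via the tilting machinery of Theorem~\ref{thm:B}. First I fix a straight descending continuous quiver $Q_0$ of type $\mathbb{A}$. For $Q_0$ the triple $(\phi_0,\Phi_0,\Psi_0)$ is given: $\phi_0$ is a bijection from $\Ind(\modr(Q_0))$ to geodesics of $\hyper$, $\Phi_0$ carries maximally $\Npi$-compatible sets to laminations, and $\Psi_0$ identifies $\mathcal{S}_\fpc(Q_0)$ with $\mathcal{L}$, all satisfying the stated semistable-to-geodesic compatibilities.

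Next I connect $Q$ to $Q_0$ by a finite sequence of continuous tilts. Since $Q$ has finitely many sinks and sources, an induction on this number --- resolving one sink-source pair at a time --- should produce a chain $Q=Q_k\to Q_{k-1}\to\cdots\to Q_1\to Q_0$ of continuous tilts as defined in Section~\ref{sec:continuous tilting section}. Applying Theorem~\ref{thm:B} to each link provides compatible bijections on indecomposable representable modules, on maximally $\Npi$-compatible sets, and on four-point-condition stability conditions. Composing these with $\phi_0$, $\Phi_0$, and $\Psi_0$ yields the candidate $\phi$, $\Phi$, and $\Psi$ for $Q$. The two compatibilities stated in the corollary then propagate through the chain: the claim that $\phi|_T$ restricts to a bijection between the indecomposables in $T$ and the geodesics of $\Phi(T)$ holds for $Q_0$ by Theorem~\ref{thm:measured laminations and straight four point conditions} and is preserved at each tilt by the explicit formula $\phi(M_I)\mapsto\phi(\mu(M_I))$ in the second bullet of Theorem~\ref{thm:B}; the semistable-to-geodesic compatibility for $\Psi$ transports similarly via the third bullet of Theorem~\ref{thm:B}.

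The main obstacle will be establishing the existence of such a tilting path, since this is precisely the combinatorial content not already packaged by Theorems~\ref{thm:A} and~\ref{thm:B}. Concretely I would need to show that every continuous quiver of type $\mathbb{A}$ with finitely many sinks and sources lies in the same continuous-tilting equivalence class as $Q_0$; I expect this to reduce to exhibiting, for any non-straight $Q$, an explicit continuous tilt that strictly decreases the combined count of sinks and sources, a property that should be readable directly from the construction in Section~\ref{sec:continuous tilting section}. A secondary point to check is that the composite bijections do not depend on which tilting path is chosen; this should follow formally from the mutation-equivariance in the second bullet of Theorem~\ref{thm:B}, since two paths between the same quivers must induce the same bijection on indecomposables by tracking image geodesics through $\phi_0$. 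Once the path-existence step is in hand, the remainder of the argument is functorial bookkeeping via iterated application of Theorem~\ref{thm:B}.
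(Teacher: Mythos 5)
Your approach is essentially the same as the paper's: the Corollary is stated in the paper precisely as ``Corollary (to Theorems~\ref{thm:stability tilting}~and~\ref{thm:measured laminations and straight four point conditions})'', i.e., it is obtained by transporting the straight-descending case (Theorem~\ref{thm:measured laminations and straight four point conditions}) through the tilting bijections of Theorems~\ref{thm: continuous tilting gives bijection on max compatible sets} and~\ref{thm:stability tilting}. So the architecture you describe --- establish the triple $(\phi_0,\Phi_0,\Psi_0)$ for $Q_0$ and conjugate by a chain of continuous tilts --- is exactly what the paper does, and the paper is equally terse about the intermediate steps.

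One caution on the detail you flag as ``the main obstacle.'' Your monovariant plan --- ``exhibit a continuous tilt that strictly decreases the combined count of sinks and sources'' --- cannot be iterated naively using only the forward tilt of Section~\ref{sec:continuous tilting section}. That operation is defined only when $-\infty$ is a sink, and after a single application $-\infty$ becomes a source, so the same move is unavailable at the next step. What does work is to alternate the forward tilt with its inverse (or, symmetrically, with the mirror tilt at $+\infty$): when $-\infty$ is a sink, tilt at the smallest source to absorb it; when $-\infty$ is a source, perform the inverse tilt at the smallest sink, which again drops the count by one and restores $-\infty$ to a sink. Iterating terminates at one of the two straight quivers, and those two are linked by the single tilt at $s=+\infty$. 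Since Theorems~\ref{thm: continuous tilting gives bijection on max compatible sets} and~\ref{thm:stability tilting} produce genuine bijections, composing with inverses is harmless. So the plan is sound provided ``an explicit continuous tilt'' is read to include inverse tilts. Your secondary worry about path-independence is not actually needed for the statement as worded, which only asserts the \emph{existence} of the three bijections with the stated compatibilities, not their canonicity.
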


In Section \ref{sec:continuous cluster character}, we give a formula for a continuous cluster character $\chi(M_{ab})$.
This is a formal expression in formal variables $x_t$, one for every real number $t$.
We verify some cluster mutation formulas, but {leave further work for a future paper}.

In Section~\ref{sec:finite map using continuous tilting}, we relate continuous tilting to cluster categories of type $\mathbb{A}_n$.
In particular, we discuss how a particular equivalence between type $\mathbb{A}_n$ cluster categories is compatible with continuous tilting.
We conclude our contributions with an example for type $\mathbb{A}_4$ (Section~\ref{sec:finite map and continuous tilting example}).
Then we briefly describe some directions for further related research.

\subsection*{Acknowledgements}
The authors thank Gordana Todorov for helpful discussions.
KI was supported by  Simons Foundation Grant \#686616.
Part of this work was completed while JDR was at the Hausdorff Research Institute for Mathematics (HIM); JDR thanks HIM for their support and hospitality.
JDR is supported at Ghent University by BOF grant 01P12621.
JDR thanks Aran Tattar and Shijie Zhu for helpful conversations.

\section{The Finite Case}\label{sec:finite case}
    
    {There is a relation between stability conditions and generic decompositions which will become more apparent in the continuous case. Here we examine the finite case and impose continuous structures onto the discrete functions in order to give a preview of what will happen in the continuous quiver case.} 
    
    For a finite quiver of type of $\mathbb{A}_n$ with vertices $1,\cdots,n$, we need a piecewise continuous functions on the interval $[0,n+1]$ which has discontinuities at the vertices which are sources or sinks. The stability function will be the derivative of this function. It will have Dirac delta functions at the sources and sinks. Since this is a reformulation of well-known results, we will not give proofs. {We also review the Caldero-Chapoton cluster character for representations of a quiver of type $A_n$ \cite{CCC} in order to motivate the continuous case in Section \ref{sec:continuous cluster character}}.

\subsection{Semistability condition}

Recall that a stability function is a linear map
\[
	\theta: K_0\Lambda=\ZZ^n\to \RR.
\]
A module $M$ is $\theta$-semistable if $\theta(\undim M)=0$ and $\theta(\undim M')\le0$ for all submodules $M'\subset M$. We say $M$ is $\theta$-stable if, in addition, $\theta(\undim M')<0$ for all $0\neq M'\subsetneq M$. For $\Lambda$ of type $\mathbb{A}_n$, we denote by $M_{(a,b]}$ the indecomposable module with support on the vertices $a+1,\cdots,b$. For example $M_{(i-1,i]}$ is the simple module $S_i$. Let $F:\{0,1,\cdots,n\}\to \RR$ be the function 
\[
	F(k)=\sum_{0<i\le k}\theta(\undim S_i)=\theta(\undim M_{(0,k]})
\]
Then we have $\theta(M_{(a,b]})=F(b)-F(a)$. 

Thus, for $M_{(a,b]}$ to be $\theta$-semistable we need $F(a)=F(b)$ and another condition to make $\theta(\undim M')\le0$. For example, take the quiver of type $\mathbb{A}_n$ having a source at vertex $c$ and sinks at $1,n$. Then the indecomposable submodules of $M_{(a,b]}$ are $M_{(a,x]}$ for $a<x<c$, $x\le b$ and $M_{(y,b]}$ for $c\le y<b$, $a\le y$. Therefore, we also need $F(x)\le F(a)=F(b)\le F(y)$ for such $x,y$. (And strict inequalities $F(x)<F(a)=F(b)<F(y)$ to make $M_{(a,b]}$ stable.)

A simple characterization of $x,y$ is given by numbering the arrows. Let $\alpha_i$ be the arrow between vertices $i,i+1$. Then the arrows connecting vertices in $(a,b]$ are $\alpha_i$ for $a<i<b$. $M_{(a,x]}\subset M_{(a,b]}$ if $\alpha_x$ points left (and $a<x<b$). $M_{(y,b]}\subset M_{(a,b]}$ if $\alpha_y$ points to the right (and $a<y<b$). More generally, we have the following.

\begin{proposition}\label{prop: semi-stability using F}
$M_{(a,b]}$ is $\theta$-semistable if and only if the following hold.
\begin{enumerate}
\item $F(a)=F(b)$
\item $F(x)\le F(a)$ if $\alpha_x$ points left and $a<x<b$.
\item $F(y)\ge F(b)$ if $\alpha_y$ points right and $a<y<b$.
\end{enumerate}
Furthermore, if the inequalities in (2),(3) are strict, $M_{(a,b]}$ is $\theta$-stable.\qed
\end{proposition}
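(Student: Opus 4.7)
The plan is to reduce the defining conditions of $\theta$-(semi)stability to checks on indecomposable submodules of $M_{(a,b]}$ and then translate each check into a statement about the function $F$.

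The first step is immediate: since $\theta(\undim M_{(a,b]}) = F(b)-F(a)$ by the definition of $F$, condition (1) is exactly equivalent to $\theta(\undim M_{(a,b]})=0$. So the remaining work is to show that (2) and (3) jointly encode the inequality $\theta(\undim M')\le 0$ for every submodule $M'\subseteq M_{(a,b]}$. Because any submodule of an interval module over a type $\mathbb{A}$ quiver decomposes as a direct sum of interval modules, and $\theta$ is additive on direct sums, it suffices to verify the inequality on each indecomposable submodule.

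Next I classify the indecomposable submodules. Since $M_{(a,b]}$ is $1$-dimensional at each vertex in $\{a+1,\dots,b\}$ with identity maps along all interior arrows, a submodule is determined by a subset $S\subseteq\{a+1,\dots,b\}$ that is closed under the action of the quiver (if a vertex of $S$ is the source of an interior arrow, the target must lie in $S$). The connected components of such an $S$ are intervals $(y,x]$ with $a\le y<x\le b$, and the closedness conditions translate exactly to: if $y>a$ then $\alpha_y$ points right, and if $x<b$ then $\alpha_x$ points left. Thus the indecomposable submodules of $M_{(a,b]}$ are precisely the $M_{(y,x]}$ satisfying these boundary-orientation conditions, and each such submodule contributes the constraint $F(x)\le F(y)$.

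Finally I case-split on $(y,x)$. The case $(y,x)=(a,b)$ recovers (1). The case $y=a$, $a<x<b$ requires $\alpha_x$ pointing left, giving precisely (2). The case $a<y<b$, $x=b$ requires $\alpha_y$ pointing right, giving precisely (3). In the remaining interior case $a<y<x<b$ (with $\alpha_y$ right and $\alpha_x$ left), applying (2) and (3) together with (1) yields the chain
\[
F(x)\le F(a)=F(b)\le F(y),
\]
so the needed inequality $F(x)\le F(y)$ is automatic. This shows (1)--(3) are equivalent to $\theta$-semistability. For the stability statement, running the same case analysis with strict inequalities in (2) and (3) gives strict inequalities $F(x)<F(y)$ in the boundary cases directly and, in the interior case, via $F(x)<F(a)=F(b)<F(y)$, so $M_{(a,b]}$ has no proper submodule with $\theta=0$ and is therefore $\theta$-stable. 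The main thing to be careful about is the classification of indecomposable submodules, which is standard once the orientation bookkeeping is done correctly; everything else is then a short substitution into the defining inequality.
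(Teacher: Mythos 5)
Your proof is correct and follows the approach the paper sketches (the paper explicitly omits a formal proof, remarking it is a ``reformulation of well-known results''). The only substantive addition is that you handle the general classification of indecomposable submodules: the paper's preceding discussion treats only the single-source quiver, in which every indecomposable submodule of $M_{(a,b]}$ shares an endpoint with $(a,b]$, whereas in a general orientation one can also have ``interior'' submodules $M_{(y,x]}$ with $a<y<x<b$, $\alpha_y$ pointing right and $\alpha_x$ pointing left. You correctly note that the constraint $F(x)\le F(y)$ coming from these interior submodules is automatic given (1)--(3), which is exactly the observation needed to pass from the paper's special case to the general statement. The reduction to indecomposable submodules via additivity of $\theta$, and the endpoint cases recovering (2) and (3), are all handled properly, and the strict-inequality argument for stability is sound since any proper nonzero submodule decomposes into proper indecomposable summands, each with $\theta<0$.
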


For example, take the quiver
\begin{equation}\label{eq: example of A4}
	1\xleftarrow{\alpha_1} 2\xrightarrow{\alpha_2} 3\xrightarrow{\alpha_3} 4
\end{equation}
with $\theta=(-1,2,-1,-2)$, $F=(0,-1,1,0,-1)$. Then $F(1)<F(0)=F(3)=0<F(2)$, with $\alpha_1$ pointing left and $\alpha_2$ pointing right. So, $M_{(0,3]}$ is $\theta$-stable. Similarly, $F(1)=F(4)=-2<F(2),F(3)$ implies $M_{(1,4]}$ is also $\theta$-stable 

One way to visualize the stability condition is indicated in Figure \ref{Fig: graph of F, finite case}.

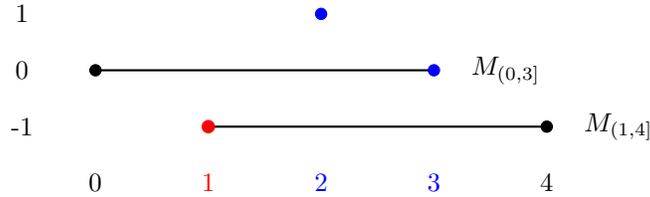
\begin{figure}[htbp]
\begin{center}
\begin{tikzpicture}[scale=.75]
%
\foreach \x/\y in {0/0,-1/-1,1/1}
\draw (-1.3,\x) node{\y};
\draw[thick] (0,0)--(6,0);
\draw[thick] (2,-1)--(8,-1);
\draw (0,-2) node{$0$};
\draw[red] (2,-2) node{$1$};
\draw[blue] (4,-2) node{$2$};
\draw[blue] (6,-2) node{$3$};
\draw (8,-2) node{$4$};
\draw (6.5,0)node[right]{$M_{(0,3]}$};
\draw (8.5,-1)node[right]{$M_{(1,4]}$};
\draw[fill,blue] (4,1) circle[radius=1mm];
\draw[fill,blue] (6,0) circle[radius=1mm];
\draw[fill] (8,-1) circle[radius=1mm];
\draw[fill] (0,0) circle[radius=1mm];
\draw[thick,red,fill] (2,-1) circle[radius=1mm];
%
\end{tikzpicture}
\caption{The graph of $F:\{0,1,2,3,4\}\to \mathbb R$ shows the $\theta$-semistable modules. When $M_{(a,b]}$ is $\theta$-stable, $F(a)=F(b)$ making the line segment connecting $(a,F(a))$ to $(b,F(b))$ horizontal. Also, the intermediate red points are below and the blue points are above the line segment if we draw as red/blue, the spot $(x,F(x))$ for $\alpha_x$ pointing left/right, respectively.
}
\label{Fig: graph of F, finite case}
\end{center}
\end{figure}

\subsection{Generic decomposition} Stability conditions for quivers of type $\mathbb{A}_n$ also give the generic decomposition for dimension vectors ${\bf d}\in\NN^n$. This becomes more apparent for large $n$ and gives a preview of what happens in the continuous case.

Given a dimension vector ${\bf d}\in \NN^n$, there is, up to isomorphism, a unique $\Lambda$-module $M$ of dimension vector $\bf d$ which is rigid, i.e., $\Ext^1(M,M)=0$. The dimension vectors $\beta_i$ of the indecomposable summands of $M$ add up to $\bf d$ and the expression ${\bf d}=\sum \beta_i$ is called the ``generic decomposition'' of $\bf d$. We use the notation $\beta_{ab}=\undim M_{(a,b]}$ and ${\bf d}=(d_1,\cdots,d_n)$.

There is a well-known formula for the generic decomposition of a dimension vector \cite{abeasis} which we explain with an example. Take the quiver of type $\mathbb{A}_9$:
\begin{equation}\label{eq: A9 quiver}
	{\color{red}1\xleftarrow{\alpha_1} 2\xleftarrow{\alpha_2} 3\xleftarrow{\alpha_3} 4\xleftarrow{\alpha_4} 5\xleftarrow{\alpha_5}} 6 {\color{blue}\xrightarrow{\alpha_6} 7\xrightarrow{\alpha_7} 8\xrightarrow{\alpha_8} 9}
\end{equation}
with dimension vector ${\bf d}=(3,4,1,3,2,4,3,1,3)$. 
To obtain the generic decomposition for $\bf d$, we draw $d_i$ spots in vertical columns as shown in \eqref{eq: spots method} below.
\begin{equation}\label{eq: spots method}
\xymatrixrowsep{10pt}\xymatrixcolsep{15pt}
\xymatrix{
&1 & \ar[l] 2 &\ar[l] 3 &\ar[l] 4&\ar[l] 5&\ar[l] 6\ar[r] & 7\ar[r] &8\ar[r] &9\\
&\bullet\ar@{-}[r]& \bullet\ar@{-}[r] &\bullet\ar@{-}[r]&  \bullet\ar@{-}[r]&\bullet\ar@{-}[r]&\bullet \\ 
&\bullet\ar@{-}[r]& \bullet&   &   \bullet\ar@{-}[r] & \bullet\ar@{-}[r]  &  \bullet\ar@{-}[r] &\bullet&&\bullet\\ 
&\bullet\ar@{-}[r]& \bullet&   &   \bullet& &  \bullet\ar@{-}[r] &\bullet&&\bullet\\ 
&&\bullet&&&& \bullet\ar@{-}[r]&  \bullet \ar@{-}[r]&  \bullet \ar@{-}[r]&  \bullet  \\ 
\bf d: &3& 4 & 1&3&2&4&3&1&3
	}
\end{equation}
For arrows going left, such as $3\leftarrow 4$, $5\leftarrow6$ the top spots should line up horizontally. For arrows going right, such as $6\to 7, 7\to 8$ the bottom spots should line up horizonally as shown. Consecutive spots in any row are connected by horizontal lines. For example, the spots in the first row are connected giving $M_{(0,6]}$ but the second row of spots is connected in three strings to give $M_{(0,2]}, M_{(3,7]}$ and $S_9=M_{(8,9]}$. The generic decomposition is given by these horizontal lines. Thus
\[
   {\bf d}=(3,4,1,3,2,4,3,1,3)=\beta_{06}+2\beta_{02}+ \beta_{37}+2\beta_{89}+\beta_{12}+\beta_{34}+\beta_{57}+\beta_{59}
\]
is the generic decomposition of ${\bf d}=(3,4,1,3,2,4,3,1,3)$.

We construct this decomposition using a stablity function based on \eqref{eq: spots method}. We explain this with two examples without proof. The purpose is to motivate continuous stability conditions.

Take real numbers $d_0,d_1,\cdots,d_n,d_{n+1}$ where $d_0=d_{n+1}=0$. Draw arrows where the arrow $\alpha_i$ connecting $i-1,i$ where $\alpha_0$ points in the same direction as $\alpha_1$ and $\alpha_n$ points in the same direction as $\alpha_{n-1}$. To each arrow $\alpha_i$ we associate the real number which is $d_i$ of the target minus $d_i$ of the source. We write this difference below the arrow if the arrow points left and above the arrow when the arrow points right. Then we compute the partial sums for the top numbers and the bottom numbers. Let $B,R$ denote these functions. Thus $B(6)=0,B(7)=-1,B(8)=-3,B(9)=-1,B(10)=-4$ and $R(0)=0, R(1)=-3$, etc. as shown below.

\begin{center}
\begin{tikzpicture}[scale=1.1]
\foreach \x/\y in {0/0,1/3,2/4,3/1,4/3,5/2,6/4,7/3,8/1,9/3,10/0}
\draw (\x,3) node{\x} (\x,1.2) node{\y};
\foreach \x/\y in {1/3,2/4,3/1,4/3,5/2,6/4}
\draw[red!90!black] (\x,0) node{$-\y$};
\draw[red!90!black] (0,0) node{$0$} (-.7,0)node{$R:$};
\foreach \x/\y in {1/-3,2/-1,3/3,4/-2,5/1,6/-2}
\draw[red!90!black] (\x-.5,.6) node{$\y$};
\foreach \x in {0,1,...,5}
\draw[<-,red,thick] (\x+.2,1.2)--(\x+.8,1.2);
\foreach \x/\y in {6/0,7/-1,8/-3,9/-1,10/-4}
\draw[blue] (\x,2.4) node{$\y$};
\draw[blue] (-.7,2.4)node{$B:$};
\foreach \x/\y in {7/-1,8/-2,9/2,10/-3}
\draw[blue] (\x-.5,1.8) node{$\y$};
\foreach \x in {6,7,8,9}
\draw[blue,thick,->] (\x+.2,1.2)--(\x+.8,1.2);
\draw (-.7,3) node{$i:$} (-.7,1.2) node{$d_i$};
\end{tikzpicture}
\end{center}

We extend the blue and red functions by $B(x)=B(6)=0$ for all $x<6$ and $R(x)=R(6)=-4$ for all $x>6$.

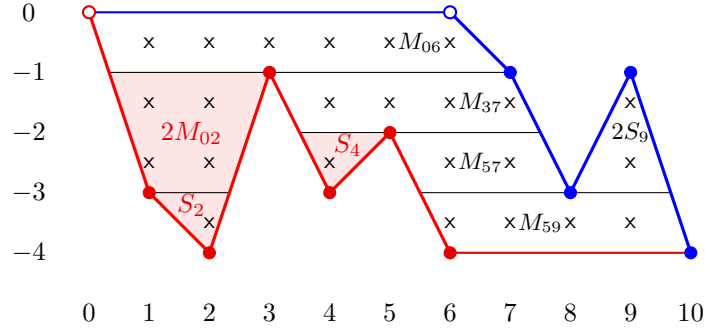
\begin{figure}[htbp]
\begin{center}
\begin{tikzpicture}[scale=.8]

\draw[fill,red!10!white] (1,-3)--(2,-4)--(3,-1)--(.33,-1);
\draw[fill,red!10!white] (3.5,-2)--(4,-3)--(5,-2);

\foreach \x in {-.5,-1.5,-2.5}
\draw (1,\x) node{\small\sf x};
\foreach \x in {-.5,-1.5,-2.5,-3.5}
\draw (2,\x) node{\small\sf x};
\foreach \x in {-.5}
\draw (3,\x) node{\small\sf x};
\foreach \x in {-.5,-1.5,-2.5}
\draw (4,\x) node{\small\sf x};
\foreach \x in {-.5,-1.5}
\draw (5,\x) node{\small\sf x};
\foreach \x in {-.5,-1.5,-2.5,-3.5}
\draw (6,\x) node{\small\sf x};
\foreach \x in {-1.5,-2.5,-3.5}
\draw (7,\x) node{\small\sf x};
\foreach \x in {-3.5}
\draw (8,\x) node{\small\sf x};
\foreach \x in {-1.5,-2.5,-3.5}
\draw (9,\x) node{\small\sf x};
\draw[red!90!black] (1.7,-2) node{$2M_{02}$};
\draw[red!90!black] (1.7,-3.2) node{$S_2$};
\draw[red!90!black] (4.3,-2.2) node{$S_4$};
\draw (9,-2) node{\small$2S_9$};
\draw (5.5,-.5) node{\small$M_{06}$};
\draw (6.5,-1.5) node{\small$M_{37}$};
\draw (6.5,-2.5) node{\small$M_{57}$};
\draw (7.5,-3.5) node{\small$M_{59}$};
\foreach \y in {0,-1,...,-4}
\draw (-1,\y) node{$\y$};
\foreach \x in {0,1,...,10}
\draw (\x,-5) node{$\x$};
\draw (.33,-1)--(7,-1);
\draw (3.5,-2)--(7.5,-2); 
\draw (1,-3)--(2.33,-3) (5.5,-3)--(9.67,-3);
\draw[thick,blue] (0,0)--(6,0);
\draw[thick,red!90!black] (6,-4)--(10,-4);
\draw[blue,very thick] (6,0)--(7,-1)--(8,-3)--(9,-1)--(10,-4);
\foreach \x/\y in {7/-1,8/-3,9/-1,10/-4}
\draw[blue,fill] (\x,\y) circle[radius=1mm];
\draw[fill,white] (6,0) circle[radius=1mm];
\draw[blue,thick] (6,0) circle[radius=1mm];
\draw[very thick,red] (0,0)--(1,-3)--(2,-4)--(3,-1)--(4,-3)--(5,-2)--(6,-4);
\foreach \x/\y in {1/-3,2/-4,3/-1,4/-3,5/-2,6/-4}
\draw[red!90!black,fill] (\x,\y) circle[radius=1mm];
\draw[fill,white] (0,0) circle[radius=1mm];
\draw[red!90!black,thick] (0,0) circle[radius=1mm];
\end{tikzpicture}
\caption{The function $F:(0,n+1]\to \RR$ is given by the red function $R$ on $(0,6]$ since the first 5 arrows point left and by the blue function $B$ on $(6,10]$. A module $M_{(a,b]}$ is semistable if there is a horizontal line from $(a,y)$ to $(b,y)$ so that $R(x)\le y\le B(x)$ for all $a\le x\le b$. ``Islands'' $M_{(a,b]}$ for $b<5$ are shaded.}
\label{Fig: F=B,R, finite case}
\end{center}
\end{figure}
%

The generic decomposition of $\bf d$ is given by ${\bf d}=\sum a_i\beta_i$ where the coefficient $a_i$ of $\beta_i=\beta_{ab}$ is the linear measure of the set of all $c\in \RR$ so that $M_{(x,y]}$ is semistable with $F(x)=c=F(y)$ and so that $\ZZ\cap (x,y]=\{a+1,\cdots,b\}$.
For example, in Figure~\ref{Fig: F=B,R, finite case}, the coefficient of $\beta_{02}$ is the measure of the vertical interval $[-3,-1]$ which is 2. For $c$ in this vertical interval the horizontal line at level $c$ goes from the red line between $\frac13$ and 1 to the blue line between $\frac{7}3$ and $3$ with blue lines above and red lines below.
(We extend the red and blue functions to the interval $(0,10]$ as indicated.) We require $R(x)\le B(x)$ for all $x\in \RR$.

We interpret the stability function $\theta$ to be the derivative of $F$ where we consider $R,B$ separately. So, $\theta$ is a step function equal to $-3,-1,3,-2,1,-2$ on the six red unit intervals between 0 and 6 and $\theta$ is $-1,-2,2,-3$ on the four blue intervals from 6 to 10. $\theta$ is 4 times the dirac delta function at 6. For example,
\[
    \theta(M_{(a,b]})=\int_a^b \theta(x)\dd x=F(b)-F(a)=0
\]
for $a=3+\varepsilon,b=7+\varepsilon$ with $0\le \epsilon\le 1$ since $F(a)=F(b)=-1-2\varepsilon$ in this range. However, $F(5)=-2$ which is greater than $-1-2\varepsilon$ for $\varepsilon>1/2$. So, $M_{(3+\varepsilon,7+\varepsilon]}$ is semistable only when $0\le \varepsilon\le 1/2$. Taking only the integers in the interval $(3+\varepsilon,7+\varepsilon]$, we get $M_{(3,7]}$ to be semistable.

\subsection{APR-tilting} For quivers of type $\mathbb{A}_n$, we would like all arrows to be pointing in the same direction. We accomplish this with APR-tilting \cite{APR}.

We recall that APR-tilting of a quiver $Q$ is given by choosing a sink and reversing all the arrows pointing to that sink, making it a source in a new quiver $Q'$. Modules $M$ of $Q$ correspond to modules $M'$ of $Q'$ with the property that
\[
    \Hom(M',N')\oplus \Ext(M',N')\cong \Hom(M,N)\oplus \Ext(M,N)
\]
for all pairs of $\Bbbk Q$-modules $M,N$. This gives a bijection between exceptional sequences for $\Bbbk Q$ and for $\Bbbk Q'$. However, generic modules are given by sets of ext-orthogonal modules. So, we need to modify this proceedure.

In our example, we have a quiver $Q$ with 5 arrows pointing left. By a sequence of APR-tilts we can make all of these point to the right.
The new quiver $Q'$ will have all arrows pointing to the right. Any $\Bbbk Q$-module $M_{(a,b]}$ with $a\le5<6$ gives ath $\Bbbk Q'$-module $M_{(5-a,b]}$.
For example $M_{(0,6]}, M_{(3,7]}, M_{(5,7]},M_{(5,9]}$ become $M_{(5,6]}, M_{(2,7]}, M_{(0,7]},M_{(0,9]}$. See Figure \ref{Fig: APR-tilt of Fig 2}.
For $a>5$, such as $M_{(8,9]}=S_9$, the module is ``unchanged''. For $b\le5$, the APR-tilt of $M_{(a,b]}$ is $M_{(5-b,5-a]}$.
However, these are not in general ext-orthgonal to the other modules in our collection. For example, the APR-tilt of $S_4=M_{(3,4]}$ is $M_{(1,2]}=S_2$ which extends $M_{(2,7]}$.
So we need to shift it by $\tau^{-1}$ to get $\tau^{-1}M_{(5-b,5-a]}=M_{(4-b,4-a]}$. There is a problem when $b=5$ since, in that case $4-b=-1$.
This problem will disappear in the continuous case. We call modules $M_{(a,b]}$ with $b<5$ \emph{islands}.
We ignore the problem case $b=5$. Islands are shaded in Figure \ref{Fig: F=B,R, finite case}.
Shifts of their APR-tilts are shaded in Figure \ref{Fig: APR-tilt of Fig 2}.

\begin{figure}[htbp]
\begin{center}
\begin{tikzpicture}[scale=.8]
\draw[fill,red!10!white] (.66,2)--(1,3)--(2,2);
\draw[fill,red!10!white] (2.25,3)--(3,6)--(4,5)--(5,3);

\foreach \x in {.5,1.5,2.5}
\draw (1,\x) node{\small\sf x};
\foreach \x in {.5,1.5}
\draw (2,\x) node{\small\sf x};
\foreach \x in {.5,1.5,2.5,3.5,4.5,5.5}
\draw (3,\x) node{\small\sf x};
\foreach \x in {.5,1.5,2.5,3.5,4.5}
\draw (4,\x) node{\small\sf x};
\foreach \x in {.5,1.5,2.5}
\draw (5,\x) node{\small\sf x};
\foreach \x in {.5,1.5,2.5,3.5}
\draw (6,\x) node{\small\sf x};
\foreach \x in {.5,1.5,2.5}
\draw (7,\x) node{\small\sf x};
\foreach \x in {.5}
\draw (8,\x) node{\small\sf x};
\foreach \x in {.5,1.5,2.5}
\draw (9,\x) node{\small\sf x};
\draw[red!90!black] (3.5,4) node{$2M_{24}$};
\draw[red!90!black] (1.35,2.25) node{$S_1$};
\draw[red!90!black] (3.35,5.25) node{$S_3$};
\draw (9,2) node{\small$2S_9$};
\draw (5.8,3.2) node{\small$M_{56}$};
\draw (6.5,2.5) node{\small$M_{27}$};
\draw (6.5,1.5) node{\small$M_{07}$};
\draw (7.5,0.5) node{\small$M_{09}$};
\foreach \y in {0,1,...,6}
\draw (-1,\y) node{$\y$};
\foreach \x in {0,1,...,10}
\draw (\x,-1) node{$\x$};
\draw (.33,1)--(9.67,1);
\draw (.66,2)--(7.5,2);
\draw (2.25,3)--(7,3);
\draw (2.75,5)--(4,5);
\draw[thick,red!90!black] (0,0)--(10,0);
\draw[very thick,blue] (0,0)--(1,3)--(2,2)--(3,6)--(4,5)--(5,3)--(6,4)--(7,3)--(8,1)--(9,3)--(10,0);
\foreach \x/\y in {1/3,2/2,3/6,4/5,5/3,6/4,7/3,8/1,9/3,10/0}
\draw[blue,fill] (\x,\y) circle[radius=1mm];
\draw[fill,white] (0,0) circle[radius=1mm];
\draw[blue,thick] (0,0) circle[radius=1mm];
\end{tikzpicture}
\caption{This is given by APR-tilting of Figure \ref{Fig: F=B,R, finite case}. The modules $M_{(a,b]}$ from Figure \ref{Fig: F=B,R, finite case} with $a\le 5<b$ become $M_{(5-a,b]}$ by APR-tilting. The ``islands'' $M_{(a,b]}$ in Figure \ref{Fig: F=B,R, finite case} gave $\tau^{-1} M_{(5-b,5-a]}=M_{(4-b,4-a]}$ above (shaded).}
\label{Fig: APR-tilt of Fig 2}
\end{center}
\end{figure}
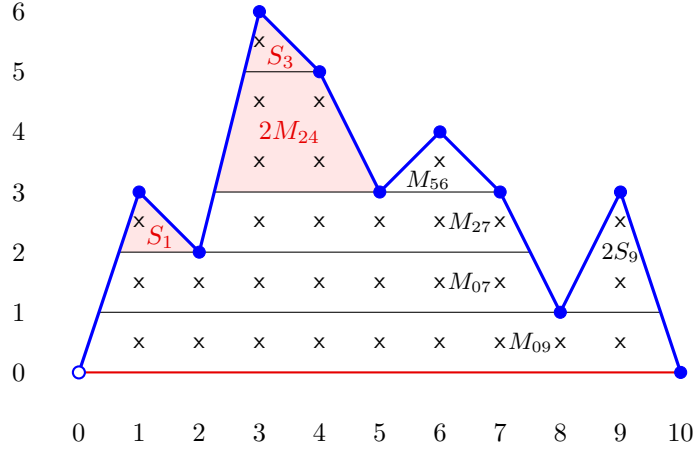

\subsection{Clusters and cluster algebras}

The components of a generic decomposition of any module form a partial tilting object since they do not extend each other. In the example shown in Figure \ref{Fig: APR-tilt of Fig 2}, we have 8 objects:
\[
	M_{01}, M_{07}, M_{09}, M_{23}, M_{24}, M_{27}, M_{56}, M_{89}.
\]
Since the quiver $A_9$ has rank $9$, we need one more to complete the tilting object. There are two other modules that we could add to complete this tilting object. They are $X=M_{26}$ and $X^\ast=M_{57}$. There are always at most two objects that will complete a tilting object with $n-1$ components. Tilting objects are examples of clusters and, in the cluster category \cite{BMRRT}, there are always exact two objects which complete a cluster with $n-1$ components.

These two objects $M_{26}$, $M_{57}$ extend each other in the cluster category with extensions:
\[
	M_{57}\to M_{27}\oplus M_{56}\to M_{26}
\]
and 
\[
	M_{26}\to M_{24}\to M_{46}[1]=\tau^{-1}M_{57}[1].
\]
In the cluster category, a module $M$ over any hereditary algebra is identified with $\tau^{-1}M[1]$. Thus, an exact sequence $\tau^{-1}M \hookrightarrow A\twoheadrightarrow B$ gives an exact triangle $M[-1]\to A\to B\to M$ in the cluster category since $\tau^{-1}M=M[-1]$.

In the cluster algebra \cite{FZ1}, which is the subalgebra of $\mathbb Q(x_1,\cdots,x_n)$ generated by ``cluster variables'', we have a formula due to Caldero and Chapoton \cite{CCC} which associates a cluster variable $\chi(M)$ to every rigid indecomposable module $M$ and, in this case, satisfies the equation:
\begin{equation}\label{eq: mutation of modules}
	\chi(X)\chi(X^\ast)=\chi(M_{27})\chi(M_{56})+\chi(M_{24})
\end{equation}
The Caldero-Chapoton formula for the cluster character of $M_{ab}$ for $1<a<b<n$ with arrows going right is {the sum of the inverses of exponential $g$-vectors of all submodules $x^{g(M_{ib})}=x_i/x_{b}$ times that of the duals of their quotients $M_{ab}/M_{ib}=M_{ai}$ (see \cite{maresca5})}:
\begin{equation}\label{eq: CC(M) finite straight}
	\chi(M_{ab})=\sum_{i=a}^b x^{-g(M_{ib})}x^{-g(DM_{ai})}= \sum_{i=a}^b \frac{x_{b}x_{a-1}}{x_ix_{i-1}}.
\end{equation}
So, $\chi(M_{aa})=\chi(0)=1$. When $b=n+1$, $M_{ab}$ is projective with support $[a,n+1)=[a,n]$. So,
\[
	\chi(P_a)=\chi(M_{a,n+1})=\sum_{i=a}^{n+1} \frac{x_{a-1}}{x_ix_{i-1}}
\]
where $x_{n+1}=1$. This yields:
\[
	\chi(M_{ab})=x_{b}\chi(P_a)-x_{a-1}\chi(P_{b+1}).
\]	
Then, the mutation equation \eqref{eq: mutation of modules} becomes the Pl\"ucker relation for the $2\times 4$ matrix:
\[
\left[\begin{matrix}
x_1 & x_4 & x_6& x_7\\
\chi(P_2) &\chi(P_5) &\chi(P_7) &\chi(P_8)
\end{matrix}
\right].
\]

\section{Continuous stability conditions}\label{sec:continuous stability conditions}

\subsection{Continuous quivers of type $\mathbb{A}$}
Recall that in a partial order $\preceq$, a element $x$ is a  \textdef{sink} if $y\preceq x$ implies $y=x$.
Dually, $x$ is a \textdef{source} if $x\preceq y$ implies $y=x$.

\begin{definition}\label{def:quiver}
    Let $\preceq$ be a partial order on $\Rbar=\RR\cup\{-\infty,+\infty\}$ with finitely many sinks and sources such that, between sinks and sources, $\preceq$ is either the same as the usual order or the opposite.
    Let $Q=(\RR,\preceq)$ where $\preceq$ is the same partial order on $\RR\subseteq\Rbar$.
    We call $Q$ a \textdef{continuous quiver of type $\mathbb{A}$}.
    We consider $Q$ as a category where the objects of $Q$ are the points in $\RR$ and
    \begin{displaymath}
        \Hom_Q(x,y) = \begin{cases}
            \{*\} & y\preceq x \\
            \emptyset & \text{otherwise}.
        \end{cases}
    \end{displaymath}
\end{definition}
    
\begin{definition}\label{def:representation}
    Let $Q$ be a continuous quiver of type $\mathbb{A}$.
    A \textdef{pointwise finite-dimensional $Q$ module} over the field $\Bbbk$ is a functor $V:Q\to \veck$.
    Let $I\subset\RR$ be an interval.
    An \textdef{interval indecomposable module} $M_I$ is given by
    \begin{align*}
        M_I(x) :&= \begin{cases}
            \Bbbk & x\in I \\
            0 & x\notin I
        \end{cases} &
        M_I(x,y) :&= \begin{cases}
            1_{\Bbbk} & y\preceq x,\, x,y \in I \\
            0 & \text{otherwise},
        \end{cases}
    \end{align*}
    where $I\subseteq\RR$ is an interval.
\end{definition}

By results in \cite{BC-B20,IRT22} we know that every pointwise finite-dimensional $Q$ module is isomorphic to a direct sum of interval indecomposables.
In particular, this decomposition is unique up to isomorphism and permutation of summands.
In \cite{IRT22} it is shown that the category of pointwise finite-dimensional modules is abelian, interval indecomposable modules are indecomposable, and there are indecomposable projectives $P_a$ for each $a\in\RR$ given by
\begin{align*}
    P_a(x) &= \begin{cases}
            \Bbbk & x\preceq a \\
            0 & \text{otherwise}
        \end{cases}
    & 
    P_a(x,y) &= \begin{cases}
            1_\Bbbk & y\preceq x \preceq a \\
            0 & \text{otherwise}.
        \end{cases}
\end{align*}
These projectives are representable as functors. 

\begin{definition}\label{def:representable}
    Let $Q$ be a continuous quiver of type $\mathbb{A}$.
    We say $V$ is \textdef{representable} if there is a finite direct sum $P=\bigoplus_{i=1}^n {P_{a_i}}$ and an epimorphism $P\twoheadrightarrow V$ whose kernal is a direct sum $\bigoplus_{j=1}^m {P_{a_j}}$.
\end{definition}

By \cite[Theorem 3.0.1]{IRT22}, $V$ is isomorphic to a finite direct sum of interval indecomposables.
By results in \cite{R19+}, the subcategory of representable modules is abelian (indeed, a wide subcategory) but has no injectives.
When $\preceq$ is the standard total order on $\RR$, the representable modules are the same as those considered in \cite{IT15}.

\begin{notation}\label{note:rrep}
    We denote the abelian subcategory of representable modules over $Q$ by $\modr(Q)$.
    We denote the set of isomorphism classes of indecomposables in $\modr(Q)$ by $\Indr(Q)$.
\end{notation}

\begin{definition}\label{def:red and blue intervals}
    Let $Q$ be a continuous quiver of type $\mathbb{A}$, $s\in\Rbar$ a sink, and $s'\in\Rbar$ an adjacent source.
    \begin{itemize}
        \item If $s<s'$ and $x\in(s,s')$ we say $x$ is \textdef{red} and $(s,s')$ is \textdef{red}.
        \item If $s'<s$ and $x\in(s',s)$ we say $x$ is \textdef{blue} and $(s',s)$ is \textdef{blue}.
    \end{itemize}
    Let $I$ be an interval in $\RR$ such that neither $\inf I$ nor $\sup I$ is a source.
    We will need to refer to the endpoints of $I$ as being red or blue the following way.
    \begin{itemize}
        \item If $\inf I$ is a sink and $\inf I\in I$ we say $\inf I$ is \textdef{blue}.
        \item If $\inf I$ is a sink and $\inf I\notin I$ we say $\inf I$ is \textdef{red}. 
        \item If $\sup I$ is a sink and $\sup I\in I$ we say $\sup I$ is \textdef{red}.
        \item If $\sup I$ is a sink and $\sup I\notin I$ we say $\sup I$ is \textdef{blue}. 
        \item If $\inf I$ is not a sink ($\sup I$ is not a sink) then we say $\inf I$ ($\sup I$) is red or blue according to the first part of the definition.
    \end{itemize}
\end{definition}

Note that $\inf I$ could be $-\infty$, in which case it is red. Similarly, if $\sup I=+\infty$ then it is blue.
\begin{definition}\label{def:left and right colors}
    We say $I$ is \textdef{left red} (respectively, \textdef{left blue}) if $\inf I$ is red (respectively, if $\inf I$ is blue).
    
    We say $I$ is \textdef{right red} (respectively, \textdef{right blue}) if $\sup I$ is red (respectively, if $\sup I$ is blue).
\end{definition}
We have the following characterization of support intervals.

\begin{proposition}
    Let $I\subset\RR$ be the support of an indecomposable representable module $M_I\in\Indr(Q)$. Then an endpoint of $I$ lies in $I$ if and only if it is either left blue or right red (or both, as in the case $I=[s,s]$ where $s$ is a sink).
\end{proposition}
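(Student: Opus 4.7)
At a sink endpoint the claim is tautological from Definition~\ref{def:red and blue intervals}, so the content is at endpoints in the interior of a red or blue interval. My plan is to express $M_I$ as a cokernel of a map between finite direct sums of projectives, control the supports of the projectives involved, and then analyze each endpoint of $I$ as either surviving from the target $P = \bigoplus P_{a_i}$ or inherited from the image $\im(\phi)$.

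The first step is to compute $\supp P_a$ for every $a \in \RR$. Because $\preceq$ has only finitely many sinks and sources and between them agrees (up to orientation) with the standard order on $\RR$, the down-set $\{x : x\preceq a\}$ is a closed interval running from $a$ itself to the nearest sink in the direction of the arrows at $a$ (or to $\pm\infty$). A direct case check on whether $a$ is a sink, a source, red, or blue then confirms that $\supp P_a$ satisfies the endpoint-coloring condition of the proposition.

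For a general indecomposable representable $M_I = \coker(\phi : \bigoplus_j P_{b_j} \to \bigoplus_i P_{a_i})$, let $e := \sup I$ and assume $e$ lies in the interior of a color interval. The Yoneda identification $\Hom(P_a, V) = V(a)$ constrains which $P_{a_i}$ can contribute to the stalk of $M_I$ near $e$. If $e$ is red (arrows near $e$ point leftward), only a $P_{a_i}$ with $a_i = e$, or with $a_i$ in the source region strictly above $e$, has support reaching $e$ from the left; combined with the previous step applied to $\supp \im(\phi)$, one concludes $e \in I$. If $e$ is blue (arrows point rightward), any $P_{a_i}$ whose support reaches $e$ extends strictly past it, so the cokernel's support must be truncated strictly before $e$ by contributions from $\im(\phi)$, forcing $e \notin I$. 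The $\inf I$ case is symmetric under swapping red with blue and sup with inf.

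The main obstacle is the direction ruling out endpoints of the wrong color, for instance showing that $M_{[0, e)}$ cannot be representable when $e$ is interior red. This depends on \emph{finiteness} of the direct sums: any finite union $\bigcup_i [0, a_i]$ with each $a_i < e$ is a closed interval $[0, \max_i a_i]$ strictly contained in $[0, e)$, so cannot appear as an image in a cokernel presentation of $M_{[0,e)}$. This finiteness argument, applied symmetrically at each endpoint and in each color, closes the proof.
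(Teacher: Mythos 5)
Your strategy --- compute $\supp P_a$, then track the pointwise dimension in the exact sequence $0\to\bigoplus P_{b_j}\to\bigoplus P_{a_i}\to M_I\to 0$ --- is the right one, and your description of $\supp P_a$ as a closed interval running from $a$ to the nearest sink is correct. But the "main obstacle" paragraph does not actually close the argument. Showing that $\bigcup_i\supp P_{a_i}$ cannot contain $[0,e)$ when every $a_i<e$ only rules out presentations whose target fails to reach $e$; it says nothing about presentations in which some $a_i\ge e$, so that $e\in\supp\bigl(\bigoplus P_{a_i}\bigr)$ and the real question is whether the kernel cancels the dimension at $e$. Your earlier sentence ("combined with the previous step applied to $\supp\im(\phi)$, one concludes $e\in I$") points at this case but asserts the conclusion without supplying the deduction. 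Likewise, "the cokernel's support must be truncated strictly before $e$" in the blue case is not what happens: $\sup I$ is still exactly $e$; only the membership of the single point $e$ is in question.

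The missing ingredient is a short one-sided continuity observation. Say $e=\sup I$ is interior to a red interval $(s,s')$ with $s$ a sink and $s'$ a source. Each $P_a$ meeting $(s,s')$ has $\supp P_a\cap(s,s')=(s,a]$ when $a\in(s,s')$, or $=(s,s')$ when $a\ge s'$; in either case this is an interval closed toward $e$ on the right. Since the direct sums in Definition~\ref{def:representable} are finite, $x\mapsto\dim\bigl(\bigoplus P_{a_i}\bigr)(x)$ and $x\mapsto\dim\bigl(\bigoplus P_{b_j}\bigr)(x)$ are step functions on $(s,s')$ with finitely many jumps, each constant on a one-sided neighbourhood $(e-\delta,e]$. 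Their difference $\dim M_I$ is therefore also constant on $(e-\delta,e]$, forcing $\dim M_I(e)=\lim_{x\to e^-}\dim M_I(x)=1$, i.e.\ $e\in I$. The symmetric argument in a blue interval $(s',s)$ --- where $\supp P_a\cap(s',s)=[a,s)$ is closed at its left end, so both dimension functions are constant on $[e,e+\delta)$ --- gives $\dim M_I(e)=\lim_{x\to e^+}\dim M_I(x)=0$, i.e.\ $e\notin I$. This one computation subsumes both of your colour cases and, since both the left and right limits must agree with $\dim M_I(e)$ when the endpoint is a source, it also re-derives that neither endpoint of $I$ can be a source.
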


\subsection{Half-$\delta$ functions and red-blue function pairs}\label{sec:half delta and red-blue pairs}

To define continuous stability conditions we need to introduce half-$\delta$ functions.
A half-$\delta$ function $\delta_x^-$ at $x\in\Rbar$ has the following property.
Let $f$ some integrable function on $[a,b]\subset\Rbar$ where $a<x<b$.
Then the following equations hold:
\begin{align*}
    \int_a^x \left(f(t) + \delta_x^-\right) \dd t &=\left( \int_a^x f(t) \dd t\right) + 1, &
    \int_x^b\left( f(t) + \delta_x^-\right) \dd t &= \int_x^b f(t) \dd t.
\end{align*}
The half-$\delta$ function $\delta_x^+$ at $x\in\RR$ has a similar property for an $f$ integrable on $[a,b]\subset\Rbar$ with $a<x<b$:
\begin{align*}
    \int_a^x \left(f(t) + \delta_x^+\right) \dd t &= \int_a^x f(t) \dd t, &
    \int_x^b \left(f(t) + \delta_x^+\right) \dd t &= \left(\int_x^b f(t) \dd t\right) + 1.
\end{align*}
Consider $f+\delta_x^- - \delta_x^+$.
Then we have
\begin{align*}
    \int_a^x \left(f(t) + \delta_x^- - \delta_x^+\right) \dd t &=\left( \int_a^x f(t) \dd t\right) + 1, \\
    \int_x^b \left( f(t) + \delta_x^- - \delta_x^+\right) \dd t &= \left(\int_x^b f(t) \dd t\right) - 1,\\
    \int_a^b \left(f(t) + \delta_x^- - \delta_x^+\right) \dd t &= \int_a^b f(t)\dd t.
\end{align*}

For each $x\in\RR$, denote the functions
\begin{align*}
    \Delta_x^-(z) &= \int_{-\infty}^z \delta_x^- \dd t = \begin{cases} 0 & z< x \\ 1 & z\geq x \end{cases} \\
    \Delta_x^+(z) &= \int_{-\infty}^z \delta_x^+ \dd t = \begin{cases} 0 & z\leq x \\ 1 & z > x. \end{cases}
\end{align*}

Though not technically correct, we write that a function $f+u_x^-\Delta_x^-+u_x^+\Delta_x^+$ is from $\RR$ to $\RR$.
See Figure~\ref{fig:delta example} for an example.
\begin{figure}
    \centering
\begin{tikzpicture}
\coordinate (A) at (-2,-2);
\coordinate (Ap) at (-2,-1.7);
\coordinate (AA) at (-5.5,-2);
\coordinate (A1) at (-1,-.8);
\coordinate (A11) at (-1,-.3);
\coordinate (B0) at (0,0);
\coordinate (B2) at (0,1);
\coordinate (B1) at (0,-1);
\coordinate (C11) at (1.1,-.2);
\coordinate (C1) at (1.1,-.7);
\coordinate (C) at (2,1);
\coordinate (Cp) at (2,1.3);
\coordinate (D) at (5.5,1);
\draw[thick] (AA)--(A)--(B0)--(B2) (B1)--(C)--(D);
\draw[very thick] (B1)--(B2);
\draw (Cp) node[right]{$F(x)=1$ for $2<x$};
\draw (Ap) node[left]{$F(x)=-2$ for $x<-2$};
\draw (B2) node[left]{$F(0)=1$};
\draw (A11) node[left]{$F(x)=x$};
\draw (A1) node[left]{for $-2\le x<0$};
\draw (C11) node[right]{$F(x)=x-1$};
\draw (C1) node[right]{for $0<x\le 2$};
\end{tikzpicture}
    \caption{Graph of $F(x)=f(x)+\Delta_0^-(x)-2\Delta_0^+(x)$ when $f$ is given by: $f(x) =x$ if $|x|\le2$, $f(x)=-2$ for $x<-2$, $f(x)=2$ if $x>2$, $u_0^-=1$, $u_0^+=-2$, and $u_x^\pm=0$ for all other $x\in\RR\cup\{\pm\infty\}$.}
    \label{fig:delta example}
\end{figure}
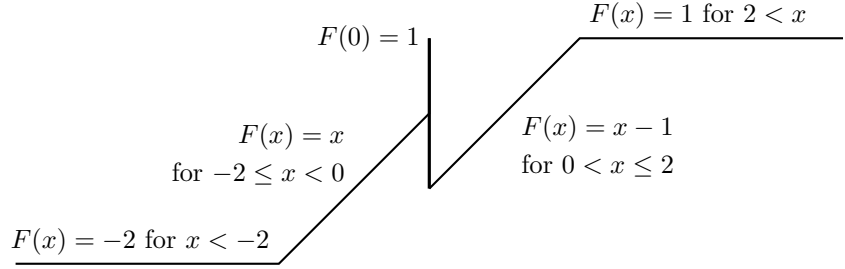
We also allow $\delta_{+\infty}^-$ and $\delta_{-\infty}^+$, which satisfy the relevant parts of the equations above.
We don't allow the other half-$\delta$ functions at $\pm\infty$ because it does not make sense in terms of integration.

Our stability conditions will be comprised of equivalence classes of pairs of useful functions.
\newpage 
\begin{definition}\label{def:useful}
    We call a function $F:\RR\to\RR$ \textdef{useful} if it satisfies the following.
    \begin{enumerate}
        \item $\displaystyle F = f + \sum_{x\in\RR\cup\{+\infty\}} u_x^- \Delta_x^- + \sum_{x\in\RR\cup\{-\infty\}}u_x^+\Delta_x^+$, where $f:\RR\to\RR$ is a continuous function of bounded variation and each $u_x^-, u_x^+$ are in $\RR$.
        \item The sums $\displaystyle\sum_{x\in\RR\cup\{+\infty\}} |u_x^-|$ and $\displaystyle\sum_{x\in\RR\cup\{-\infty\}} |u_x^+|$ both converge in $\RR$.
    \end{enumerate}
\end{definition}

\begin{remark}
Note Definition~\ref{def:useful}(2) implies the set $\{u_x^-\mid u_x^-\neq 0\}\cup\{u_x^+\mid u_x^+\neq 0\}$ is at most countable.
Combining (1) and (2) in Definition~\ref{def:useful} means we think of $F$ as having the notion of bounded variation.

We think of the value of a useful function $F$ at $x$ as being ``the integral from $-\infty$ to $x$'' where the integrand is some function that includes at most countably-many half-$\delta$ functions.
\end{remark}

\begin{proposition}\label{prop:useful functions are useful}
    Let $F$ be a useful function and let $a\in\overline{\RR}$.
    \begin{enumerate}
        \item\label{prop:useful functions are useful:left limit} If $a > -\infty$ then $\lim_{x\to a^-} F(x)$ exists.
        \item\label{prop:useful functions are useful:right limit} If $a < +\infty$ then $\lim_{x\to a^+} F(x)$ exists.
        \item\label{prop:useful functions are useful:limit equations} If $a\in\RR$ then $F(a) = \lim_{x\to a^-} F(x) + u_a^-$ and $F(a)+u_a^+ = \lim_{x\to a^+} F(x)$.
    \end{enumerate}
\end{proposition}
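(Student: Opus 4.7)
The plan is to decompose $F = f + G^- + G^+$, where $G^-(z) := \sum_x u_x^- \Delta_x^-(z)$ and $G^+(z) := \sum_x u_x^+ \Delta_x^+(z)$, and analyze each summand separately. On $\RR$, $\Delta_{+\infty}^-$ is identically $0$ and $\Delta_{-\infty}^+$ is identically $1$, so the ``boundary'' terms $u_{+\infty}^-$ and $u_{-\infty}^+$ contribute only a constant to $F$ and can be set aside for the one-sided limit analysis. The continuous piece $f$ contributes $f(a)$ to each of $F(a)$, $\lim_{z\to a^-}F(z)$, and $\lim_{z\to a^+}F(z)$ when $a\in\RR$; for $a = \pm\infty$, the bounded variation hypothesis on $f$ yields the one-sided limits at infinity, since a BV function on $\RR$ is a difference of bounded monotone functions. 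Thus the real content lies in $G^-$ and $G^+$.

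The key observation is that $G^-(z) = \sum_{x\le z} u_x^-$ and $G^+(z) = \sum_{x<z} u_x^+$, and by Definition~\ref{def:useful}(2) both series converge absolutely. To compute $\lim_{z\to a^-} G^-(z)$ for $a\in\RR$, I would use a standard $\varepsilon$-tail argument: given $\varepsilon>0$, pick a finite $S\subset\RR$ with $\sum_{x\notin S}|u_x^-| < \varepsilon$; for $z<a$ close enough to $a$, the finite set $S\cap(-\infty,a)$ contains no element of $(z,a)$, so
\[
\left| G^-(z) - \sum_{x<a} u_x^- \right| = \left| \sum_{z<x<a} u_x^- \right| \le \sum_{x \notin S}|u_x^-| < \varepsilon.
\]
This gives $\lim_{z\to a^-}G^-(z) = \sum_{x<a}u_x^-$, and entirely symmetric arguments yield $\lim_{z\to a^+}G^-(z) = \sum_{x\le a}u_x^-$, $\lim_{z\to a^-}G^+(z) = \sum_{x<a}u_x^+$, and $\lim_{z\to a^+}G^+(z) = \sum_{x\le a}u_x^+$. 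The cases $a=\pm\infty$ in parts (1) and (2) follow by the same tail argument applied to the full absolutely convergent series.

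For part (3), I would subtract values: since $G^-(a) = \sum_{x\le a}u_x^-$ (the half-$\delta$ at $a$ is ``caught'' by the defining integral from the left) while $G^+(a) = \sum_{x<a}u_x^+$ (not caught), we obtain
\[
F(a) - \lim_{z\to a^-}F(z) = \sum_{x\le a}u_x^- - \sum_{x<a}u_x^- = u_a^-,
\]
and the analogous telescoping on the right-hand limit gives $\lim_{z\to a^+}F(z) - F(a) = u_a^+$. The only genuine subtlety is justifying that one-sided limits commute with a (possibly countable) sum of jump contributions, and the absolute summability in Definition~\ref{def:useful}(2) is precisely what powers the $\varepsilon$-tail argument; beyond that, everything is bookkeeping.
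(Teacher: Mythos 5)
Your proposal is correct and takes essentially the same approach as the paper's proof: decompose $F$ into the continuous bounded-variation part $f$ plus the two half-$\delta$ sums, compute the one-sided limits of each piece, and read off parts (1)--(3) from the resulting formulae. The paper labels these computations ``straightforward'' and leaves them implicit; your $\varepsilon$-tail argument for the convergence of the jump sums and the remark that BV functions have one-sided limits at $\pm\infty$ simply supply the omitted details.
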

\begin{proof}
    (1) and (2).
    Straightforward computations show that
    \begin{align*}
        \lim_{x\to a^-} F(x)  &= \lim_{x\to a^-} f(x) + \sum_{-\infty<x<a} u_x^- + \sum_{-\infty\leq x < a} u_x^+ &\text{if }a>-\infty \\
        \lim_{x\to a^+} F(x) &= \lim_{x\to a^+} f(x) + \sum_{-\infty<x\leq a} u_x^- + \sum_{-\infty\leq x \leq a} u_x^+ & \text{if }a<+\infty.
    \end{align*}
    Thus, (1) and (2) hold.
    
    (3).
    By definition, we see that \[F(a)= f(a) + \sum_{-\infty<x\leq a} u_x^- + \sum_{-\infty\leq x < a} u_x^+.\]
    Thus, using (1) and (2), we see that (3) holds.
\end{proof}

\begin{notation}\label{note:useful limits}
    Let $F$ be a useful function.
    For each $x\in\RR$, we define
    \begin{align*}
        \minn{F}(a) :&= \min\{F(a), \lim_{x\to a^-} F(x), \lim_{x\to a^+} F(x)\} \\
        \maxx{F}(a) :&= \max\{F(a), \lim_{x\to a^-} F(x), \lim_{x\to a^+} F(x)\}.
    \end{align*}
    We also define
    \begin{align*}
        F(-\infty) :&= \lim_{x\to -\infty^+} F(x) - u_{-\infty}^+ &
        F(+\infty) :&= \lim_{x\to +\infty^-} F(x) + u_{+\infty}^-
    \end{align*}
    \begin{align*}
        \minn{F}(-\infty) :&= \min\{F(-\infty),\lim_{x\to-\infty^+} F(x)\} \\
        \minn{F}(+\infty) :&= \min\{F(-\infty),\lim_{x\to+\infty^-} F(x)\} \\
        \maxx{F}(-\infty) :&= \max\{F(-\infty),\lim_{x\to-\infty^+} F(x)\} \\
        \maxx{F}(+\infty) :&= \max\{F(-\infty),\lim_{x\to+\infty^-} F(x)\}.
    \end{align*}
\end{notation}

\begin{definition}\label{def:graph of useful}
    Let $F$ be a useful function.
    We define the \textdef{graph} $\mathcal{G}(F)$ of $F$ to be the following subset of $\RR^2$: \[ \left\{ (x,y) \left|\, x\in\RR,\ \minn{F}(x) \leq y \leq \maxx{F}(x) \right\} \right. . \]
    
    The \textdef{completed graph}, denoted $\overline{\mathcal{G}(F)}$ of $F$ is the following subset of $\overline{\RR}\times\RR$:
    \[ \left\{ (x,y) \left|\, x\in\overline{\RR},\ \minn{F}(x) \leq y \leq \maxx{F}(x) \right\} \right. . \]
\end{definition}

\begin{remark}\label{rmk:bounded graph}
    Let $\displaystyle F = f + \sum_{x\in\RR\cup\{+\infty\}} u_x^- \Delta_x^- + \sum_{x\in\RR\cup\{-\infty\}}u_x^+\Delta_x^+$ be a useful function.
    For any $a\leq b\in\RR$ there exists  $c\leq d\in\RR$, such that $\mathcal{G}(F)\cap([a,b]\times\RR) = \mathcal{G}(F)\cap([a,b]\times[c,d])$.
\end{remark}

We now define red-blue function pairs, which are used to define equivalence classes of pairs of useful functions.
The red-blue function pairs are analogs of the red and blue functions from Section~\ref{sec:finite case}.
\begin{definition}\label{def:red-blue function pair}
    Let $R=r+\sum_{x\in\RR} u_x^-\Delta_x^-+\sum_{x\in\RR} u_x^+\Delta_x^+$ and let $B=b+\sum_{x\in\RR} v_x^-\Delta_x^- + \sum_{x\in\RR} v_x^+\Delta_x^+$ be useful functions.
    We say the pair $(R,B)$ is a \textdef{red-blue function pair} if the following criteria are satisfied.
    \begin{enumerate}
        \item\label{def:red-blue function pair:make u and v easy} For all $x\in\RR$, we have $\maxx{R}(x) = R(x)$ and $\minn{B}(x)=B(x)$.
        \item\label{def:red-blue function pair:no variation at sources} If $s$ is a source, $u_s^-=u_s^+=v_x^-=v_x^+=0$.
        \item\label{def:red-blue function pair:graphs don't cross} For all $x\in\overline{\RR}$,
        \begin{align*}
            R(x) \leq \maxx{B}(x) && \text{and} && \minn{R}(x) \leq B(x).
        \end{align*}
        \item\label{def:red-blue function pair:equal at infinities} We have $R(-\infty)=B(-\infty)$ and $R(+\infty)=B(+\infty)$.
        \item\label{def:red-blue function pair:R constant on blue} The useful function $R$ is constant on blue intervals. That is: for $s\leq x<y< s'$ in $\RR$ where $(s,s')$ is blue, we have $r(x)=r(y)$ and $u_y^-=u_y^+=0$.
        \item\label{def:red-blue function pair:B constant on red} The useful function $B$ is constant on red intervals. That is: for $s < x < y\leq s'$ in $\RR$ where $(s,s')$ is red, we have $b(x)=b(y)$ and $v_x^-=v_x+=0$.
    \end{enumerate}
\end{definition}

\begin{lemma}\label{lem:local max and local min}
    Let $(R,B)$ be a red-blue function pair.
    \begin{enumerate}
        \item\label{lem:local max and local min:closed box} For any $a\leq b$ and $c\leq d$ in $\RR$, the set $\mathcal{G}(R)\cap ([a,b]\times[c,d])$ is closed in $\RR^2$.
        
        \item For any $a\leq b\in\RR$ the useful function $R$ has a local maximum on $[a,b]$ in the sense that there exists $x\in[a,b]$ such that for all $y\in[a,b]$: $\maxx{R}(y) \leq \maxx{R}(x)$.
        \item For any $a\leq b\in\RR$ the useful function $R$ has a local minimum on $[a,b]$ in the sense that there exists $x\in[a,b]$ such that for all $y\in[a,b]$: $\minn{R}(x) \leq \minn{R}(y)$.
    \end{enumerate}
    Statements (1)--(3) are true when we replace $R$, $r$, and $u$ with $B$, $b$, and $v$, respectively.
\end{lemma}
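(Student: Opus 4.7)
The plan is to prove all three parts by a unified sequential-compactness argument powered by one preliminary observation: if $x_n \to x$ in $\RR$ with $x_n \neq x$ for all $n$, then $u_{x_n}^- \to 0$ and $u_{x_n}^+ \to 0$. This follows from Definition~\ref{def:useful}(2), since the convergence of $\sum_t |u_t^-|$ and $\sum_t |u_t^+|$ forces, for each $\eta>0$, the set $\{t : |u_t^-|\geq \eta\}\cup \{t : |u_t^+|\geq \eta\}$ to be finite, and hence to contain only finitely many of the $x_n$. Combined with Proposition~\ref{prop:useful functions are useful}, this yields the key consequence: if $x_n \nearrow x$ (resp.\ $x_n \searrow x$) strictly, then each of $\lim_{t\to x_n^-}R(t)$, $R(x_n)$, and $\lim_{t\to x_n^+}R(t)$ converges to $\lim_{t\to x^-}R(t)$ (resp.\ $\lim_{t\to x^+}R(t)$). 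In particular, $\minn{R}(x_n)$ and $\maxx{R}(x_n)$ both tend to the appropriate one-sided limit of $R$ at $x$, which equals one of the three values in the definition of $\minn{R}(x)$ and $\maxx{R}(x)$.

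For part~(1), take a convergent sequence $(x_n,y_n)\to (x,y)$ inside $\mathcal{G}(R)\cap([a,b]\times[c,d])$. Compactness of $[a,b]\times[c,d]$ gives $(x,y)\in [a,b]\times[c,d]$, so I only need $\minn{R}(x) \leq y \leq \maxx{R}(x)$. Pass to a subsequence on which $x_n$ is either constantly equal to $x$ or strictly monotone to $x$. In the constant case the conclusion is immediate. In a strictly monotone case the preliminary observation squeezes $y_n \in [\minn{R}(x_n), \maxx{R}(x_n)]$ to the corresponding one-sided limit of $R$ at $x$, and that limit lies in $[\minn{R}(x),\maxx{R}(x)]$ by definition.

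For parts~(2) and~(3), Remark~\ref{rmk:bounded graph} guarantees that $M := \sup_{x\in[a,b]} \maxx{R}(x)$ and $m := \inf_{x\in[a,b]} \minn{R}(x)$ are finite. Choose a maximizing sequence $x_n$ with $\maxx{R}(x_n) \to M$ (and similarly a minimizing sequence); extract a subsequence convergent to some $x\in[a,b]$ and then a further subsequence which is either constant or strictly monotone to $x$. In the constant subcase $\maxx{R}(x)=M$ (resp.\ $\minn{R}(x)=m$) directly. In the strictly monotone subcase the preliminary observation forces $\maxx{R}(x_n)\to L$ (resp.\ $\minn{R}(x_n)\to L$), where $L$ is the relevant one-sided limit of $R$ at $x$; since $L\in\{\lim_{t\to x^-}R(t), R(x), \lim_{t\to x^+}R(t)\}$ we obtain $\maxx{R}(x)\geq L=M$ (resp.\ $\minn{R}(x)\leq L=m$), so the extremum is attained at $x$. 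The analogous statements for $B$ follow by the same argument with $R$, $r$, and $u$ replaced by $B$, $b$, and $v$; the hypotheses on $B$ in Definition~\ref{def:red-blue function pair}(\ref{def:red-blue function pair:make u and v easy}) play no role in the argument beyond knowing $B$ is useful.

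The main obstacle is the possibility that nonzero half-$\delta$ weights $u_{x_n}^\pm$ accumulate near the limit point $x$, which could a priori prevent $R(x_n)$ from agreeing with its one-sided limits in the limit; this is defused at the outset by the $\ell^1$-type summability built into Definition~\ref{def:useful}(2). Once that is dispatched, everything reduces to a standard sequential compactness argument on $[a,b]$.
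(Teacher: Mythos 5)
Your proof is correct and uses essentially the same mechanism as the paper: the $\ell^1$-summability of the half-$\delta$ weights from Definition~\ref{def:useful}(2) forces $u_{x_n}^\pm\to 0$ along any injective sequence, which squeezes $\minn{R}(x_n)$ and $\maxx{R}(x_n)$ to a one-sided limit of $R$, and everything else is sequential compactness on $[a,b]$. The only stylistic difference is that you prove (2)--(3) directly by a maximizing/minimizing sequence, while the paper deduces them from (1) via a compactness argument on the greatest lower bound of admissible rectangles; these are interchangeable.
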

\begin{proof}
    We first prove (1) for $R$ as the proof for $B$ is identical.
    Let $\{(x_i,y_i)\}$ be a sequence in $\mathcal{G}(R)\cap([a,b]\times[c,d])$ that converges to $(w,z)$.
    Since $\{x_i\}$ converges to $w$ we assume, without loss of generality, that $\{x_i\}$ is monotonic.
    If there exists $i\in \NN$ such that $x_i=w$ then, assuming monotonicity, we know $(w,z)\in\mathcal{G}(R)$.
    Thus, assume $x_i\neq w$ for all $i\in\NN$.

    Without loss of generality, assume $\{x_i\}$ is increasing.
    The decreasing case is similar.
    Since $\sum_{x\in\overline{\RR}} |u_x^-|+|u_x^+|<\infty$, we know that
    \[\lim_{i\to\infty} |\maxx{R}(x_i) - \minn{R}(x_i)| = 0.\]
    And so,
    \[\lim_{i\to\infty} \maxx{R}(x_i) = \lim_{i\to\infty} \minn{R}(x_i) =\lim_{i\to\infty} R(x_i) = \lim_{\to w^-} R(x).\]
    Then we must have
    \[ \lim_{i\to\infty} y_i = \lim_{\to w^-} R(x). \]
    Therefore, $(w,z)\in \mathcal{G}(R)$.
    
    Next, we only prove (2) for $R$ as the remaining proofs are similar and symmetric.
    By Remark~\ref{rmk:bounded graph} there exists $c\leq d\in\RR$ such that \[\mathcal{G}(R)\cap([a,b]\times\RR)=\mathcal{G}(R)\cap([a,b]\times[c,d]).\]
    Then there must be a greatest lower bound $d_0\geq c$ for all $d$ such that the equation above holds.
    Since $\mathcal{G}(R)\cap([a,b]\times[c,d_0])$ must be closed by Lemma~\ref{lem:local max and local min}(\ref{lem:local max and local min:closed box}), there must be a point $(x,d_0)\in\mathcal{G}(R)$ for $a\leq x\leq b$.
    This is the desired $x$.
\end{proof}

\subsection{Stability conditions}

\begin{definition}\label{def:stability condition}
    Let $(R,B)$ and $(R',B')$ be red-blue function pairs.
    We say $(R,B)$ and $(R',B')$ are \textdef{equivalent} if there exists a constant $\mathfrak{c}\in\RR$ such that, for all $x\in \overline{\RR}$ and $y\in\RR$, we have
        \[ (x,y)\in \overline{\mathcal{G}(R)} \text{ if and only if } (x,y+\mathfrak{c})\in \overline{\mathcal{G}(R')}\]
        \[\text{and}\]
        \[(x,y)\in \overline{\mathcal{G}(B)} \text{ if and only if } (x,y+\mathfrak{c})\in \overline{\mathcal{G}(B')}.\]
    
    A \textdef{stability condition on $Q$}, denoted $\sigma$, is an equivalence class of red-blue function pairs.
    We denote by $\mathcal{S}(Q)$ the set of stability conditions on $Q$.
\end{definition}

We now define the \textdef{modified} versions of a continuous quiver $Q$ of type $\mathbb{A}$, an interval $I$ of a module $M_I$ in $\Indr(Q)$, and graphs of red-blue function pairs.
This makes it easier to check whether or not an indecomposable module is semistable with respect to a particular stability condition.

\begin{definition}\label{def:modified}
{~}
\begin{enumerate}
    \item\label{def:modified:quiver}
    Let $Q$ be a continuous quiver of type $\mathbb{A}$ with finitely many sinks and sources.
    We define a totally ordered set $\widehat{Q}$, called the \textdef{modified quiver} of $Q$, in the following way.
    
    First we define the elements.
    \begin{itemize}
    \item For each $x\in\RR$ such that $x$ is not a sink nor a source of $Q$, $x\in \widehat{Q}$.
    \item If $s\in\RR$ is a source of $Q$, then $s\notin \widehat{Q}$.
    \item If $s\in\RR$ is a sink of $Q$, then $s_-,s_+\in \widehat{Q}$.
    These are distinct elements, neither of which is in $\RR$.
    \item If $-\infty$ (respectively, $+\infty$) is a sink then $-\infty_+\in \widehat{Q}$ (respectively, $+\infty_- \in \widehat{Q}$).
    \end{itemize}
    
    Now, the partial order on $\widehat{Q}$ is defined in the following way.
    Let $x,y\in\widehat{Q}$.
    \begin{itemize}
        \item Suppose $x,y\in\RR\cap\widehat{Q}$.
        Then $x\leq y$ in $\widehat{Q}$ if and only if $x\leq y$ in $\RR$.
        \item Suppose $x\in\RR$ and $y=s_\pm$, for some sink $s$ of $Q$ in $\RR$.
        If $x<s$ in $\RR$ then $x<y$ in $\widehat{Q}$.
        If $s<x$ in $\RR$ then $y<x$ in $\widehat{Q}$.
        \item Suppose $x=s_\varepsilon$ and $y=s'_{\varepsilon'}$ for two sinks $s,s'$ of $Q$ in $\RR$.
        We consider $-<+$.
        Then $x\leq y$ if and only if (i) $s<s'$ or (ii) $s=s'$ and $\varepsilon\leq \varepsilon'$.
        \item If $x=-\infty_+\in \widehat{Q}$ (respectively, $y=+\infty_-\in\widehat{Q}$), then $x$ is the minimal element (respectively, $y$ is the maximal element) of $\widehat{Q}$.
    \end{itemize}
    
    If $s\in\RR$ is a sink of $Q$ then we say $s_-$ is blue and $s_+$ is red.
    If $-\infty_+\in\widehat{Q}$ we say $-\infty_+$ is red.
    If $+\infty_-\in\widehat{Q}$ we say $-\infty_+$ is blue.
    All other $x\in\widehat{Q}$ are red (respectively, blue) if and only if $x\in\RR$ is red (respectively, blue).
    
    \item\label{def:modified:interval}
    Let $I$ be an interval of $\RR$ such that $M_I$ is in $\Indr(Q)$.
    The \textdef{modified interval} $\widehat{I}$ of $\widehat{Q}$ has minimum given by the following conditions.
    \begin{itemize}
        \item If $\inf I$ is not $-\infty$ nor a sink of $Q$ then $\min \widehat{I}=\inf I$.
        \item If $\inf I$ is a sink $s$ of $Q$ then (i) $\min \widehat{I}=s_-$ if $\inf I\in I$ or (ii) $\min \widehat{I}=s_+$ if $\inf I\notin I$.
        \item If $\inf I = -\infty$ then $\min \widehat{I}=-\infty_+$.
    \end{itemize}
    The maximal element of $\widehat{I}$ is defined similarly.
    
    \item\label{def:modified:graph}
    Let $(R,B)$ be a red-blue funtion pair.
    The \textdef{modified graph} of $(R,B)$ is a subset of $\widehat{Q}\times\RR$.
    It is defined as follows.
    \begin{itemize}
        \item For each $x\in\RR$ not a sink nor a source of $Q$ and each $y\in\RR$,
        \begin{displaymath}
            (x,y)\in \widehat{G}(R,B) \text{ if and only if }[[(x,y)\in \mathcal{\mathcal{G}}(B)\text{ and }x\text{ is blue}]\text{ or }[(x,y)\in \mathcal{\mathcal{G}}(R)\text{ and }x\text{ is red}]]
        \end{displaymath}
        \item For each $s\in\RR$ a sink of $Q$ and each $y\in\RR$,
        \begin{align*}
            (s_-,y)\in\widehat{G}(R,B)&\text{ if and only if }(s,y)\in\mathcal{\mathcal{G}}(B) \\
            (s_+,y)\in\widehat{G}(R,B)&\text{ if and only if }(s,y)\in\mathcal{\mathcal{G}}(R).
        \end{align*}
        \item If $-\infty_+\in\widehat{Q}$, then for all $y\in\RR$,
        \begin{displaymath}
            (-\infty_+,y)\in\widehat{\mathcal{G}}(R,B)\text{ if and only if }(-\infty,y)\in\overline{\mathcal{G}(R)}.
        \end{displaymath}
        \item If $+\infty_-\in\widehat{Q}$, then for all $y\in\RR$,
        \begin{displaymath}
            (+\infty_-,y)\in\widehat{\mathcal{G}}(R,B)\text{ if and only if }(+\infty,y)\in\overline{\mathcal{G}(B)}.
        \end{displaymath}
    \end{itemize}
\end{enumerate}
\end{definition}

The following proposition follows from straightforward checks.
\begin{proposition}\label{prop:modified interval bijection}
    There is a bijection between $\Indr(Q)$ and intervals of $\widehat{Q}$ with distinct minimal and maximal element.
\end{proposition}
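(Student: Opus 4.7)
The plan is to exhibit explicit mutually inverse maps between $\Indr(Q)$ and intervals of $\widehat{Q}$ with distinct minimum and maximum, using the modification $M_I \mapsto \widehat{I}$ of Definition~\ref{def:modified}(\ref{def:modified:interval}) in one direction and an endpoint-recovery procedure in the other.

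First I would verify that $\Phi(M_I) := \widehat{I}$ is well-defined, i.e., that $\widehat{I}$ is actually an interval of $\widehat{Q}$ with $\min \widehat{I} \neq \max \widehat{I}$. Convexity follows immediately: the ordering on $\widehat{Q}$ restricts to the usual order on non-sink, non-source points of $\RR$, so an interval of $\RR$ (with interior sources omitted and each sink replaced by the pair $s_- < s_+$) remains an interval in $\widehat{Q}$. For distinctness of extrema, if $I$ is a non-degenerate interval then $\min \widehat{I}$ and $\max \widehat{I}$ come from distinct points of $\Rbar$, while if $I = \{s\}$ for a sink $s$, then $\widehat{I} = \{s_-, s_+\}$ still has two elements. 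The crucial compatibility is that the color convention in $\widehat{Q}$ ($s_-$ blue, $s_+$ red) encodes exactly whether a sink endpoint of $I$ lies in $I$: the unlabeled proposition immediately preceding Section~\ref{sec:half delta and red-blue pairs} says that for $M_I \in \Indr(Q)$ an endpoint is in $I$ if and only if it is left-blue or right-red, and the case splits in Definition~\ref{def:modified}(\ref{def:modified:interval}) match this characterization exactly.

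To construct the inverse $\Psi$, given an interval $J$ of $\widehat{Q}$ with distinct extrema, I would define $\inf I$ and $\sup I$ in $\Rbar$ by reversing the rules: if $\min J = s_\pm$ for a sink $s \in \RR$, set $\inf I = s$ and include or exclude $s$ according to the sign; if $\min J = -\infty_+$, set $\inf I = -\infty$; if $\min J$ is a regular point of $\RR$, set $\inf I = \min J$ with inclusion determined by its left-red or left-blue classification; symmetrically at $\sup I$ from $\max J$. To close the loop, I would verify that the resulting interval $I \subset \RR$ satisfies the endpoint-color condition from the same preceding proposition, so $M_I \in \Indr(Q)$. Importantly, sources of $Q$ are deleted from $\widehat{Q}$, so they never appear as $\min J$ or $\max J$, which is consistent with the observation that a source endpoint is incompatible with the left-blue or right-red classification required for representability.

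The anticipated main obstacle is purely bookkeeping: one must check every combination of endpoint type (regular red or blue point, sink in $\RR$, and the sinks at $\pm\infty$) against the inclusion and exclusion rules in both directions, but each case reduces to unwinding the definitions. That $\Phi \circ \Psi$ and $\Psi \circ \Phi$ are identities then follows because both maps are assembled from the same list of endpoint-by-endpoint substitutions, so the argument is a finite case analysis rather than a substantive obstacle, matching the authors' assertion that the proposition follows from straightforward checks.
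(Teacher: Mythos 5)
Your proposal is correct and amounts to carrying out precisely the "straightforward checks" that the paper states but does not spell out: the map $M_I\mapsto\widehat I$ and the endpoint-by-endpoint recovery are exactly the intended bijection, with the left-blue/right-red characterization of endpoint membership and the deletion of sources from $\widehat Q$ accounting for all the needed compatibility. Since the paper omits the proof entirely, there is nothing to contrast; you have simply supplied the details.
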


Using the modified definitions, we now define what it means to be semistable.

\begin{definition}\label{def:stable module}
    Let $Q$ be a continuous quiver of type $\mathbb{A}$ with finitely many sinks and sources, $\sigma\in\mathcal{S}(Q)$, and $(R,B)$ be a representative of $\sigma$.
    
    We say an indecomposable module $M_I$ in $\Indr(Q)$ is \textdef{$\boldsymbol{\sigma}$-semistable} if there exists a horizontal line $\ell=\widehat{I}\times\{h\}\subset\widehat{Q}\times\RR$ satisfying the following conditions.
    \begin{enumerate}
        \item\label{def:stable module:endpoints} The endpoints of $\ell$ touch $\widehat{\mathcal{G}}(R,B)$.
        That is, $(\min\widehat{I},h),(\max\widehat{I},h)\in\widehat{\mathcal{G}}(R,B)$.
        \item\label{def:stable module:no crossing} The line $\ell$ may touch but not cross $\widehat{\mathcal{G}}(R,B)$.
        That is, for each $x\in \widehat{I}$ such that $x\notin\{\max\widehat{I},\min\widehat{I}\}$, we have
        \begin{displaymath}
            \maxx{R}(x) \leq h \leq \minn{B}(x),
        \end{displaymath}
        where if $x=s_{\pm}$ then $\maxx{R}(x)=\maxx{R}(s)$ and $\minn{B}(x) = \minn{B}(s)$.
    \end{enumerate}
\end{definition}

\begin{remark}
    Notice that $M_I$ is $\sigma$-semistable whenever the following are satisfied:
    \begin{itemize}
        \item We have $[\minn{F}(\inf I), \maxx{F}(\inf I)]\cap[\minn{F'}(\sup I), \maxx{F'}(\sup I)]\neq\emptyset$, where $F$ is $R$ if $\inf I$ is red and is $B$ if $\inf I$ is blue and similarly for $F'$ and $\sup I$.
        \item For all submodules $M_J$ of $M_I$, $\minn{F'}(\sup J) \leq \minn{F}(\inf J)$, where $F,\inf J$ and $F',\sup J$ are similar to the previous point.
    \end{itemize}
    Thus, this is a continuous analogue to the semistable condition in the finite case.
\end{remark}

\begin{definition}\label{def:four point condition}
    Let $Q$ be a continuous quiver of type $\mathbb{A}$ with finitely many sinks and sources, $\sigma\in\mathcal{S}(Q)$, and $(R,B)$ be a representative of $\sigma$.
    
    We say $\sigma$ satisfies the \textdef{four point condition} if, for any $\sigma$-semistable module $M_I$, we have $|(\widehat{I}\times\{h\})\cap(\widehat{Q}\times\RR)|\leq 3$, where $\widehat{I}\times\{h\}$ is as in Definition~\ref{def:stable module}.
    We denote the set of stability conditions that satisfy the four point condition as $\mathcal{S}_\fpc(Q)$.
\end{definition}

Recall Definition~\ref{def:red and blue intervals}.
\begin{lemma}\label{lem:stable is compatible}
    Let $Q$ be a continuous quiver of type $\mathbb{A}$ with finitely many sinks and sources and let $M_I$, $M_J$ be indecomposables in $\Indr(Q)$.
    Let $a=\inf I$, $b=\sup I$, $c=\inf J$, and $d=\sup J$.
    Then $\Ext^1(M_J,M_I) \cong \Bbbk \cong \Hom(M_I,M_J)$ if and only if  one of the the following hold:
    \begin{itemize}
        \item $a<c<b<d$, and $b,c$ are red;
        \item $c<a<d<b$, and $a,d$ are blue;
        \item $c<a\leq b<d$, and $a$ is blue, and $b$ is red; or
        \item $a<c<d<b$, and $c$ is red, and $d$ is blue.
    \end{itemize}
\end{lemma}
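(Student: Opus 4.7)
The plan is to separately characterize the two conditions $\Hom(M_I,M_J)\cong\Bbbk$ and $\Ext^1(M_J,M_I)\cong\Bbbk$, and then intersect the resulting sets of $(I,J)$-configurations. For $\Hom$, I would note that a natural transformation $\phi:M_I\to M_J$ is determined by scalars $\phi_x\in\Hom_\Bbbk(\Bbbk,\Bbbk)$ for $x\in I\cap J$, subject to $\phi_y\circ M_I(x,y)=M_J(x,y)\circ\phi_x$ whenever $y\preceq x$ in $Q$. Since the structure maps on interval modules are identities along chains where both endpoints lie in the interval, this constraint forces $\phi$ to be a single scalar on each $Q$-connected component of $I\cap J$. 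Thus $\Hom(M_I,M_J)$ is one-dimensional precisely when $I\cap J$ is nonempty and remains connected under the partial order of $Q$, with the additional requirement that the boundary behavior at $\partial(I\cap J)$ inside $I$ and $J$ be compatible with the sinks and sources of $Q$.

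Next, to compute $\Ext^1(M_J,M_I)$, I would use that $M_J$ is representable, hence admits a projective presentation $0\to P'\to P\to M_J\to 0$ with $P,P'$ finite direct sums of the indecomposable projectives $P_a$; here $P$ corresponds to the sources/maximal elements of $\widehat{J}$ and $P'$ to the sinks/minimal elements. Applying $\Hom(-,M_I)$ produces a short exact computation of $\Ext^1$ whose dimension is determined by whether each generator of $P'$ survives in $\Hom(P,M_I)$. A nontrivial extension $0\to M_I\to E\to M_J\to 0$ exists exactly when $I$ and $J$ can be ``glued together along a common boundary'' to form an interval module, with $E\cong M_{I\cup J}\oplus M_{I\cap J}$ (interpreting $M_\emptyset=0$), and this gluing is only possible when the boundary endpoints of $I$ and $J$ carry the correct red/blue colors dictated by Definition~\ref{def:red and blue intervals}.

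Intersecting the two characterizations, I would split into four relative positions of $I=[a',b']$ and $J=[c',d']$ (with appropriate endpoint conventions from Proposition~\ref{prop:modified interval bijection}): (i) $I$ properly overlaps $J$ on the left; (ii) $J$ properly overlaps $I$ on the left; (iii) $I\subsetneq J$; and (iv) $J\subsetneq I$. For each, I would check that the boundary color requirements from the $\Hom$ analysis coincide exactly with those from the $\Ext^1$ analysis, yielding the four bullet points of the lemma. The main technical obstacle is the careful bookkeeping at endpoints that are themselves sinks or sources of $Q$, where the blue/red distinction determines membership of the endpoint in $I$ versus $J$, and where a wrong color pattern either forces $\Hom$ to vanish (because the identity on $I\cap J$ cannot extend naturally) or forces every extension to split (because the candidate gluing $I\cup J$ fails to satisfy the arrow-compatibility needed to be an interval module).
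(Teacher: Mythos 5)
Your proposal is correct and would, with the bookkeeping filled in, prove the lemma; but it takes a more computational route than the paper. The paper's own argument is very short: it cites \cite{IRT22} for the fact that $\Hom$ and $\Ext^1$ between interval indecomposables are always $0$- or $1$-dimensional, and then argues the biconditional directly. For the forward direction it observes that $\Hom(M_I,M_J)\neq 0$ already forces one of the four listed configurations with the outer inequality possibly non-strict, and that $\Ext^1(M_J,M_I)\neq 0$ then upgrades every inequality to strict; for the backward direction it notes each listed configuration gives a nonzero morphism $M_I\to M_J$ and exhibits the non-split extension $M_I\hookrightarrow M_{I\cup J}\oplus M_{I\cap J}\twoheadrightarrow M_J$, which is the same short exact sequence you identified. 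Your plan instead re-derives the $\Hom$ criterion from the naturality equations on $I\cap J$, computes $\Ext^1$ via a projective presentation $0\to P'\to P\to M_J\to 0$ of the representable module $M_J$, and then intersects the two characterizations across the four relative positions of $I$ and $J$. That amounts to reproving from scratch the facts the paper imports from \cite{IRT22}; it is more work but gives a self-contained argument.

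One small imprecision worth fixing in your write-up: the phrase ``one-dimensional precisely when $I\cap J$ is nonempty and remains connected under the partial order of $Q$'' is essentially vacuous, since $I\cap J$ is an intersection of real intervals and hence always an interval, and any interval of $\RR$ is connected as a full subcategory of $Q$. The actual content of the $\Hom$ criterion lives entirely in the boundary condition you tack on afterward: a nonzero natural transformation exists if and only if no element of $I\setminus J$ dominates (under $\preceq$) an element of $I\cap J$, and no element of $I\cap J$ dominates an element of $J\setminus I$. Unwinding those two conditions at the two endpoints of $I\cap J$ is exactly what produces the red/blue constraints on $b,c$ (or $a,d$, or both) in the four bullets, and the strictness required by a nontrivial extension is what rules out the degenerate cases $a=c$, $b=d$, etc.
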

\begin{proof}
     It is shown in \cite{IRT22} that $\Hom$ and $\Ext$ between indecomposables must be 0 or 1 dimensional.

    $\boldsymbol\Rightarrow$\textbf{.}
    Since $\Hom(M_I,M_J)\neq 0$ we obtain one of the items in the list where the first or last inequality may not be strict.
    Since $\Ext(M_I,M_J)\neq 0$ we see all the inequalities must be strict.
    
    $\boldsymbol\Leftarrow$\textbf{.}
    The itemized list implies $\Hom(M_I,M_J)\neq 0$.
    Then there is a short exact sequence $M_I\hookrightarrow M_{I\cup J}\oplus M_{I\cap J} \twoheadrightarrow M_J$ and so $\Ext(M_J,M_I)\neq 0$.
\end{proof}

\begin{definition}\label{def:Npi compatible}
    Let $M_I$ and $M_J$ be indecomposables in $\Indr(Q)$ for some continuous quiver of type $\mathbb{A}$ with finitely many sinks and sources.
    We say $M_I$ and $M_J$ are $\Npi$-\textdef{compatible} if both of the following are true:
    \begin{align*}
    \dim_{\Bbbk}(\Ext(M_J,M_I)\oplus \Hom(M_I,M_J)) &\leq 1 \\
    \dim_{\Bbbk}(\Ext(M_I,M_J)\oplus \Hom(M_J,M_I)) &\leq 1.
    \end{align*}
\end{definition}
One can verify this is equivalent to Igusa and Todorov's compatibility condition in \cite{IT15} when $Q$ has the straight descending orientation.

In terms of colors and set operations, $\Npi$-compatibility can be expressed as follows.
\newpage
\begin{lemma}\label{lem:Npi compatiblity as colors}
$M_I$ and $M_J$ are $\Npi$-compatible if one of the following is satisfied.
\begin{enumerate}
    \item $I\cap J=\emptyset$,
    \item $I\subset J$ and $J\setminus I$ is connected, or vice versa,
    \item $I\subset J$ and both endpoints of $I$ are the same color, or vice versa,
    \item $I\cap J\neq I$, $I\cap J\neq J$, and $I\cap J$ has endpoints of opposite color.
\end{enumerate}
\end{lemma}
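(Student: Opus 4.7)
My plan is to reduce the statement to Lemma~\ref{lem:stable is compatible} applied twice, once to the pair $(M_I,M_J)$ and once to $(M_J,M_I)$. Since $\dim \Hom$ and $\dim \Ext^1$ between indecomposables are each at most $1$, the $\Npi$-compatibility condition in Definition~\ref{def:Npi compatible} fails exactly when either $\Ext^1(M_J,M_I)\cong \Bbbk\cong \Hom(M_I,M_J)$ or $\Ext^1(M_I,M_J)\cong\Bbbk\cong \Hom(M_J,M_I)$. Writing $a=\inf I$, $b=\sup I$, $c=\inf J$, $d=\sup J$ and applying Lemma~\ref{lem:stable is compatible} to both orderings, I obtain a finite list of \emph{incompatibility configurations}: (NC1) $a<c<b<d$ with $b,c$ of the same color; (NC2) $c<a<d<b$ with $a,d$ of the same color; (NC3) $c<a\le b<d$ with $a,b$ of opposite colors; (NC4) $a<c<d<b$ with $c,d$ of opposite colors. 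Note that the ``same color'' versus ``opposite color'' dichotomies arise because swapping $I$ and $J$ in Lemma~\ref{lem:stable is compatible} interchanges red and blue at the relevant endpoints.

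With this dictionary in hand, I would verify that each of the four listed situations excludes every (NC$k$). For (1), all four (NC$k$) require $I\cap J\neq\emptyset$, so there is nothing to check. For (2), $I\subset J$ rules out (NC1), (NC2), (NC4), while $J\setminus I$ being connected means $I$ shares an endpoint with $J$, so we are not in the strict nesting $c<a\le b<d$ required by (NC3). For (3), only (NC3) is relevant, and it explicitly requires opposite colors, which is negated. For (4), the hypothesis that neither $I$ nor $J$ contains the other forces us into the overlap regimes (NC1) or (NC2); the endpoints of $I\cap J$ are exactly the pair $\{b,c\}$ (in the first case) or $\{a,d\}$ (in the second), and opposite-color endpoints of $I\cap J$ thus negate both. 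This establishes the implication.

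For the converse—which is what makes this lemma useful in practice—I would argue by case analysis on the position of $I$ relative to $J$: either they are disjoint (giving (C1)), or one is contained in the other (splitting on whether $J\setminus I$ is connected: if yes, (C2); if no, we are inside the (NC3) regime and the negation of (NC3) gives (C3)), or they overlap without containment (in which case we are in (NC1) or (NC2) regime and its negation gives exactly the opposite-color condition on the endpoints of $I\cap J$ in (C4)). The main obstacle, and the step requiring the most care, is bookkeeping the effect of open versus closed endpoints on the color assignment in Definition~\ref{def:red and blue intervals}, since an endpoint can be ``blue'' for one interval and ``red'' for another at the same real number; carefully applying Proposition~3.8 and the endpoint-color conventions is what makes the translation between Lemma~\ref{lem:stable is compatible}'s color hypotheses and the set-theoretic statements (2)--(4) rigorous.
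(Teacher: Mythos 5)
The paper states Lemma~\ref{lem:Npi compatiblity as colors} without proof, so there is no author argument to compare against; judged on its own, your strategy is the natural one and is essentially correct. You invoke Lemma~\ref{lem:stable is compatible} once in each order, use the fact (recorded in that lemma's proof) that $\Hom$ and $\Ext$ between indecomposables are each at most one-dimensional to say that $\Npi$-incompatibility is exactly ``one of the two conjunctions holds,'' and consolidate the resulting eight configurations into (NC1)--(NC4) by pairing the two with the same inequality pattern. The check that each of (1)--(4) excludes every (NC$k$), and the converse case analysis, then go through; and it is correct to treat the lemma as a biconditional even though it is stated as an implication, since that is how it is used in the proofs of Theorem~\ref{thm:four point equivalent to cluster} and Lemma~\ref{lem: continuous tilting preserves compatibility}.

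A few points should be tightened. First, your (NC4) should read $a<c\leq d<b$, not $a<c<d<b$: swapping $I$ and $J$ in item~(3) of Lemma~\ref{lem:stable is compatible} yields $a<c\leq d<b$ with $c$ blue and $d$ red, and the degenerate case $c=d$ a sink is genuinely possible (whereas in (NC3) the degenerate case $a=b$ with $a$ red, $b$ blue cannot occur, so there the $\leq$ causes no mismatch). This does not invalidate your argument for condition~(4), which rules out NC4 via $I\cap J\neq J$, but it matters for the completeness of the enumeration and for the converse. Second, the citation ``Proposition~3.8'' does not match any numbered statement in the paper; the relevant fact is the unnumbered proposition following Definition~\ref{def:left and right colors}. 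Third, your discussion of condition~(3) checks only $I\subset J$ against NC3; the ``vice versa'' branch $J\subset I$ must be checked against NC4 by the symmetric argument. Finally, and most substantively on the bookkeeping you flag: Lemma~\ref{lem:stable is compatible} writes $a=\inf I$, etc., as real numbers, but at a sink $s$ two admissible intervals may have $\inf I=\inf J=s$ as reals while $\min\widehat I\neq \min\widehat J$ in $\widehat Q$. To make the endpoint-color bookkeeping rigorous one should read $a,b,c,d$ as elements of $\widehat Q$ via Proposition~\ref{prop:modified interval bijection}, where the strict inequalities in (NC1)--(NC4) and the color assignments of Definition~\ref{def:modified} resolve all open/closed ambiguities; your proof inherits the imprecision from Lemma~\ref{lem:stable is compatible} as stated, so this should be made explicit.
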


\begin{theorem}\label{thm:four point equivalent to cluster}
    Let $\sigma\in\mathcal{S}(Q)$.
    The following are equivalent.
    \begin{itemize}
        \item $\sigma\in\mathcal{S}_\fpc(Q)$.
        \item The set of $\sigma$-semistable indecomposables is maximally $\Npi$-compatible.
    \end{itemize}
\end{theorem}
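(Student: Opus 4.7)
The plan is to interpret both sides geometrically via the modified graph $\widehat{\mathcal{G}}(R,B)$ for a representative $(R,B)$ of $\sigma$, and then play the combinatorics of Lemma~\ref{lem:Npi compatiblity as colors} against the point-counting restriction of the four point condition. I will establish the two implications separately.

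For the direction ``maximally $\Npi$-compatible implies four point condition'', I would argue by contraposition. If the four point condition fails, then some $\sigma$-semistable $M_I$ admits a witness line $\widehat{I}\times\{h\}$ touching $\widehat{\mathcal{G}}(R,B)$ at four or more points, which I label $p_0=\min\widehat{I}<p_1<p_2<p_3\le\max\widehat{I}$. Any sub-interval of $\widehat{I}$ whose endpoints are among the $p_i$ inherits both the endpoint condition (\ref{def:stable module:endpoints}) and the non-crossing condition (\ref{def:stable module:no crossing}) from $\widehat{I}\times\{h\}$, so each such pair indexes a $\sigma$-semistable module. I would then compare the nested pair $(\widehat{[p_1,p_2]},\widehat{[p_0,p_3]})$ with the properly overlapping pair $(\widehat{[p_0,p_2]},\widehat{[p_1,p_3]})$: the nested pair is $\Npi$-compatible iff $p_1$ and $p_2$ have the same colour (case (3) of Lemma~\ref{lem:Npi compatiblity as colors}), while the overlapping pair is $\Npi$-compatible iff $p_1,p_2$ have opposite colours (case (4)). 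In either colour configuration exactly one of the two pairs sits inside $T$ and fails $\Npi$-compatibility, contradicting maximality.

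For the converse, I would prove pairwise compatibility and maximality separately. For pairwise compatibility, given $M_I$ and $M_J$ semistable at heights $h_I\le h_J$, I case-split on the relative positions of $I$ and $J$. The disjoint and shared-endpoint nested cases fall directly into cases (1) and (2) of Lemma~\ref{lem:Npi compatiblity as colors}. In the remaining configurations (strict nesting with disconnected complement, or proper overlap) with $h_I=h_J$, I would build a larger $\sigma$-semistable module $M_L$ whose witness line absorbs all the endpoints of the two given witness lines; counting touches on $\widehat{L}\times\{h\}$ would yield at least four, violating the four point condition. When $h_I<h_J$, I would instead use the interior inequality $\maxx{R}(x)\le h_J\le\minn{B}(x)$ at a point of $\widehat{J}$ that is also an endpoint of $\widehat{I}$, forcing that endpoint to be red; the symmetric argument on the other endpoint pins down the colour data sufficiently to land in case (3) or (4) of Lemma~\ref{lem:Npi compatiblity as colors}. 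For maximality, I would assume $M_K\in\Indr(Q)\setminus T$ is $\Npi$-compatible with every module in $T$ and derive a contradiction. Since $M_K$ is not $\sigma$-semistable, either no height lets both endpoints of $\widehat{K}$ lie on $\widehat{\mathcal{G}}(R,B)$ or every such height forces the line to cross the graph in the interior of $\widehat{K}$. Applying Lemma~\ref{lem:local max and local min} to $R$ and $B$ on appropriate sub-intervals of $\widehat{K}$, I would extract a local extremum and build from it an explicit $\sigma$-semistable $M_L$ with interval inside $\widehat{K}$ whose endpoint colours, combined with those of $\min\widehat{K}$ and $\max\widehat{K}$, violate the appropriate case of Lemma~\ref{lem:Npi compatiblity as colors}, producing the desired incompatibility with $M_K$.

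The main obstacle will be the maximality step: converting an abstract crossing or extremum into a genuine $\sigma$-semistable obstruction $M_L$, and accounting cleanly for half-sink points $s_\pm$ appearing as endpoints of $\widehat{K}$, requires threading the four point hypothesis through the local max/min argument. This is where the four point condition does real work beyond merely enforcing pairwise compatibility, since without it the extracted obstruction might itself carry an extra interior touch point and fail to live in $T$.
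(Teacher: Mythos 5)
Your proposal is correct and takes essentially the same approach as the paper: both directions read semistability through the modified graph $\widehat{\mathcal{G}}(R,B)$, use the colour criterion of Lemma~\ref{lem:Npi compatiblity as colors} to translate four-point violations into $\Npi$-incompatibilities (and conversely, to pin down endpoint colours via the interior inequalities), and invoke Lemma~\ref{lem:local max and local min} to extract a $\sigma$-semistable obstruction module in the maximality step. You spell out the pairwise-compatibility argument in more detail than the paper does, while the paper is more explicit in the maximality case analysis, but the strategy is the same.
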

\begin{proof}
    Let $(R,B)$ be a representative of $\sigma$.
    
    $\boldsymbol\Leftarrow$\textbf{.}
    We prove the contrapositive.
    Suppose $\sigma$ does not satisfy the four point condition.
    Then there are $a<b<c<d$ in $\widehat{Q}$ that determine indecomposable modules $M_{a,b}$, $M_{a,c}$, $M_{a,d}$, $M_{b,c}$, $M_{b,d}$, $M_{c,d}$.
    Here, the notation $M_{x,y}$ means the interval indecomposable with interval $I$ such that $\min\widehat{I}=x$ and $\max\widehat{I}=y$.
    Using Lemma~\ref{lem:Npi compatiblity as colors} we see that at least two of the modules must be not $\Npi$-compatible.
    
    $\boldsymbol\Rightarrow$\textbf{.}
    Now suppose $\sigma$ satisfies the four point condition.
    By Lemma~\ref{lem:Npi compatiblity as colors} we see that the set of $\sigma$-semistable indecomposables is $\Npi$-compatible.
    We now check maximality.
    
    Let $M_J$ be an indecomposable in $\Indr(Q)$ such that $M_J$ is not $\sigma$-semistable.
    Recall left and right colors from Definition~\ref{def:left and right colors}.
    There are four cases depending on whether $J$ is left red or left blue and whether $J$ is right red or right blue.
    However, the case where $J$ is both left and right red red is similar to the case where $J$ is both left and right blue.
    Furthermore, the cases where $J$ is left red and right blue is similar to the case where $J$ left blue and right red.
    Thus we reduce to two cases where $J$ is left red: either (1) $J$ is right blue or (2) $J$ is right red.
    (Notice the case where $M_J$ is a simple projective $M_{[s,s]}$ is similar to the case where $J$ is left red and right blue.)
    
    \textbf{Case (1)}.
    Since $M_J$ is not $\sigma$-semistable, we first consider that $M_J$ fails Definition~\ref{def:stable module}(\ref{def:stable module:endpoints}) but satisfies Definition~\ref{def:stable module}(\ref{def:stable module:no crossing}).
    Notice that, in this case, it is not possible that $\inf J = -\infty$ or $\sup J = +\infty$.
    Since $M_J$ is left red, right blue, and fails Definition~\ref{def:stable module}(\ref{def:stable module:endpoints}), we must have $\maxx{R}(\inf J) < \minn{B}(\sup J)$.
    Otherwise, we could create a horizontal line segment in $\widehat{Q}\times\RR$ satisfying Definition~\ref{def:stable module}(\ref{def:stable module:endpoints}).
    Let $\varepsilon>0$ such that $0<\varepsilon<\minn{B}(\sup J)-\maxx{R}(\inf J)$.
    Let \[\ell = \widehat{Q}\times\{\maxx{R}(\inf J)+\varepsilon\}.\]
    By Lemma~\ref{lem:local max and local min}(\ref{lem:local max and local min:closed box}), there exists $w<\min\widehat{J}$ and $z>\max\widehat{J}$ in $\widehat{Q}$ such that the module $M_I$ corresponding to $[w,z]\subset\widehat{Q}$ (Proposition~\ref{prop:modified interval bijection}) is $\sigma$-semistable.
    
    Now suppose $M_J$ does not satisfy Definition~\ref{def:stable module}(\ref{def:stable module:no crossing}).
    First suppose there exists $x\in J$ such that $\maxx{R}(x) > \maxx{R}(\inf J)$.
    We extend the argument of the proof of Lemma~\ref{lem:local max and local min} to show that $\overline{\mathcal{G}(R)}$ must have global maxima in the following sense.
    There is some set $X$ such that, for all $x\in X$ and $y\notin X$, we have $\maxx{R}(y) < \maxx{R}(x)$ and, for each $x,x'\in X$, we have $\maxx{R}(x)=\maxx{R}(x')$.
    In particular, there is $z\in\widehat{Q}$ such that $\min\widehat{J}<z<\max\widehat{J}$ and for all $x$ such that $\min\widehat{J}\leq x < z$ we have $\maxx{R}(x)<\maxx{R}(z)$.
    If there is $x\in[\min\widehat{J},z]$ such that $\minn{B}(x)<\maxx{R}(z)$ then there is $w\in[\min\widehat{J},z]$ such that the module $M_I$ corresponding to $[w,z]$ is $\sigma$-semistable.
    In particular, $M_I$ is left blue and right red.
    By Lemma~\ref{lem:Npi compatiblity as colors} we see that $M_J$ and $M_I$ are not $\Npi$-compatible.
    If no such $x\in[\min\widehat{J},z]$ exists then there is a $w<\min\widehat{J}$ such that the module $M_I$ corresponding to $[w,z]$ is $\sigma$-semistable.
    Since $M_I$ is right red we again use Lemma~\ref{lem:Npi compatiblity as colors} and see that $M_I$ and $M_J$ are not $\Npi$-compatible.
    
    \textbf{Case (2)}.
    If $M_J$ satisfies Definition~\ref{def:stable module}(\ref{def:stable module:no crossing}) but fails Definition~\ref{def:stable module}(\ref{def:stable module:endpoints}), then the function $\maxx{R}(x)$ must be monotonic.
    If $\maxx{R}(x)$ is decreasing then let $x'=\inf J+\varepsilon$ be red.
    By Lemma~\ref{lem:local max and local min}(\ref{lem:local max and local min:closed box}) we can find some $\widehat{I}$ with left endpoint $x+\varepsilon$ and blue right endpoint $y'$ such that $y'>\sup J$ and $M_I$ is $\sigma$-semistable.
    By Lemma~\ref{lem:Npi compatiblity as colors}, $M_I$ and $M_J$ are not $\Npi$-compatible.
    A similar argument holds if $\maxx{R}(x)$ is monotonic increasing.

    Now suppose $M_J$ fails Definition~\ref{def:stable module}(\ref{def:stable module:no crossing}).
    The argument for the second half of Case (1) does not depend on whether $J$ is right red or right blue.
    Therefore, the theorem is true.
\end{proof}

Let $T$ and $T'$ be maximally $\Npi$-compatible sets.
We call a bijection $\mu:T\to T'$ a \textdef{mutation} if $T'=(T\setminus\{M_I\})\cup\{M_J\}$, for some $M_I\in T$ and $M_J\in T'$, and $\mu(M_K)=M_K$ for all $K\neq I$.
(Then $\mu(M_I)=M_J$.)

\section{Continuous tilting}\label{sec:continuous tilting section}

We construct a continuous version of tilting. Consider a stability condition $\sigma$ on a continuous quiver of type $\mathbb{A}$ where $-\infty$ is a sink and $s$ is either the smallest source or a real number less than the smallest source. Then continuous tilting at $s$ will replace the red interval $K=[-\infty,s)$ with the blue interval $K^\ast=(-\infty,s]$ and keep the rest of $Q$ unchanged. Thus, $\widehat Q=K\coprod Z$ is replaced with $\widehat Q^\ast=K^\ast\coprod Z$. We have an order reversing bijection $\mathfrak t:K\to K^\ast$ given by
\[
	\mathfrak t(x)=\tan\left(
	\tan^{-1}s-\tan^{-1}x-\frac\pi2
	\right).
\]
This extends, by the identity on $Z$, to a bijection $\overline{\mathfrak t}:\widehat Q\to \widehat Q^\ast$.

\subsection{Compatibility conditions}\label{sec 3.1}
We start with the continuous compatibility conditions for representable modules over the real line.
Given a continuous quiver $Q$ of type $\mathbb{A}$, we consider intervals $I$ in $\RR$.
Let $M_I$ denote the indecomposable module with support $I$.
We say that $I$ is \textbf{admissible} if $M_I$ is representable.
It is straightforward to see that $I$ is admissible if and only if the following hold.
\begin{enumerate}
\item $\inf I\in I$ if and only if it is blue, and
\item $\sup I\in I$ if and only if it is red.
\end{enumerate}
By Definition~\ref{def:representable}, neither endpoint of $I$ can be a source.
When $I=[s,s]$ is a sink, $\widehat{I}=[s_-,s_+]$.
We use notation to state this concisely: For any $a<b\in \widehat Q$, let $\widehat I(a,b)$ be the unique admissible interval in $\RR$ with endpoints $a,b$. Thus $a\in \widehat I(a,b)$if and only if$a$ is blue and $b\in \widehat I(a,b)$ if and only if $b$ is red. (Recall that every element of $\widehat Q$ is colored red or blue.)

Recall that for each $\sigma\in\mathcal{S}_\fpc(Q)$, the set of $\sigma$-semistable modules form a maximally $\Npi$-compatible set (Theorem~\ref{thm:four point equivalent to cluster}).

\subsection{Continuous tilting on modules}\label{sec:continuous tilting on modules}

\begin{lemma}\label{lem: continuous tilting preserves compatibility}{~}
\begin{itemize}
    \item [(a)] Continuous tilting gives a bijection between admissible intervals $I=\widehat I(a,b)$ for $Q$ and admissible intervals $I'$ for $Q'$ given by $I'=\widehat I'(\overline{\mathfrak t}(a),\overline{\mathfrak t}(b))$ if $\overline{\mathfrak t}(a)<\overline{\mathfrak t}(b)$ in $\widehat Q'$ and $I'=\widehat I'(\overline{\mathfrak t}(b),\overline{\mathfrak t}(a))$ if $\overline{\mathfrak t}(b)<\overline{\mathfrak t}(a)$.
    \item [(b)] Furthermore, $M_I,M_J$ are $\Npi$-compatible for $Q$ if and only if $M_{I'},M_{J'}$ are $\Npi$-compatible for $Q'$.
\end{itemize}
\end{lemma}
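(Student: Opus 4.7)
The plan for (a) is to observe that Proposition~\ref{prop:modified interval bijection} identifies admissible intervals in $Q$ with unordered pairs of distinct elements of $\widehat Q$, and similarly for $Q'$. Since $\overline{\mathfrak t}:\widehat Q\to\widehat Q'$ is a bijection, it automatically induces a bijection on these pairs and hence on admissible intervals. Pinning down the formula in the statement amounts to recording which of $\overline{\mathfrak t}(a),\overline{\mathfrak t}(b)$ is the smaller element in $\widehat Q'$: using that $\overline{\mathfrak t}$ is the identity on $Z$, order-reversing on $K$, and that every element of $K^\ast$ lies below every element of $Z$ in $\widehat Q'$, this splits into three subcases according to whether both of $a,b$ lie in $K$, both in $Z$, or one in each piece, and the formula is immediate in each.

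For (b), I would apply Lemma~\ref{lem:Npi compatiblity as colors}, which characterises $\Npi$-compatibility by the set-theoretic relationship between $I$ and $J$ together with the colours of certain endpoints, and verify the conclusion by a case analysis. Two structural observations drive the verification: every element of $K\cap\widehat Q$ is red while every element of $K^\ast\cap\widehat Q'$ is blue; and $\overline{\mathfrak t}$ reverses the $\RR$-order on $K$ but is the identity on $Z$. I would case-split on whether each of $I,J$ has endpoints entirely in $K$, entirely in $Z$, or straddles the boundary $s$, and check in each case that the change in set-position (disjoint, nested, or crossing) under $\overline{\mathfrak t}$, combined with the uniform red-to-blue flip on $K$-endpoints, converts the clause of Lemma~\ref{lem:Npi compatiblity as colors} witnessing compatibility for $(M_I,M_J)$ into the corresponding clause for $(M_{I'},M_{J'})$, and similarly for incompatibility.

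The delicate subcases are those involving straddling intervals. For example, if $I$ and $J$ both straddle $s$ in a crossing configuration, the order-reversal on $K$ turns them into a nested pair in $\widehat Q'$, while the $K$-side endpoint of $I\cap J$ flips from red to blue; the clause~(4) condition (opposite-colour intersection endpoints) for the original pair becomes exactly the clause~(3) condition (same-colour endpoints of the smaller interval) for the tilted pair. Analogous conversions handle the remaining configurations: two intervals entirely in $K$ stay crossing but with intersection endpoints flipping red-red to blue-blue; nested configurations in $K$ stay nested with matching colour updates; an interval contained in $K$ paired with one contained in $Z$ remains disjoint; and so on. The main obstacle is the bookkeeping of this case table, but the underlying principle is clean: the two involutions induced by tilting on $K$ --- order-reversal and a uniform colour flip --- are tailored precisely to intertwine the clauses of Lemma~\ref{lem:Npi compatiblity as colors}, so each individual case reduces to a short check.
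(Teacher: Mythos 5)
Your proposal is correct and takes essentially the same route as the paper: part~(a) is handled via the bijectivity of $\overline{\mathfrak t}$ (the paper dispatches it in one sentence the same way), and part~(b) is verified by applying Lemma~\ref{lem:Npi compatiblity as colors} together with the two structural facts you isolate --- that $\overline{\mathfrak t}$ is order-reversing on $K$ and the identity on $Z$, and that tilting uniformly flips $K$-endpoints from red to blue --- and then checking cases. The only cosmetic difference is the organization of the case split: the paper indexes cases by $|S\cap K|$ where $S=\{a,b,c,d\}$ is the four-element set of endpoints (giving cases $0,1,2,3,4$ plus a preliminary reduction when the four endpoints fail to be distinct), whereas you index by whether each of $I,J$ lies entirely in $K$, entirely in $Z$, or straddles $s$; these are two parametrizations of the same case table, and your sample verification of the straddling-crossing configuration matches the paper's case $|S\cap K|=2$, subcase~(b)$\leftrightarrow$(c). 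One small point worth flagging when you fill in the bookkeeping: after applying $\overline{\mathfrak t}$ you should re-sort the four images so that the ``$a\le c$'' normalization is restored before reading off which clause of Lemma~\ref{lem:Npi compatiblity as colors} applies --- the paper does this implicitly when it says, e.g., that case~(a) for $I,J$ becomes case~(c) for $I',J'$.
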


For each admissible interval $I$ for $Q$, denote by $\phi(M_I)$ the module $M_{I'}$, where $I'$ is the admissible interval of $Q'$ obtained from $I$ by continuous tilting.

Lemma \ref{lem: continuous tilting preserves compatibility} immediately implies the following.

\begin{theorem}\label{thm: continuous tilting gives bijection on max compatible sets}
Continuous tilting gives a bijection $\Phi$ between maximal compatible sets of representable indecomposable modules over $Q$ and those of $Q'$.
Furthermore if $\mu:T\to T'$ is a mutation then so is $\Phi(\mu):\Phi T\to \Phi T'$ given by $\phi(M_I)\mapsto \phi(\mu(M_I))$.
\end{theorem}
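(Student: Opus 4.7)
The plan is to define $\Phi$ pointwise by $\Phi(T):=\{\phi(M_I):M_I\in T\}$ for each maximal $\Npi$-compatible set $T\subseteq\Indr(Q)$, and then to verify four things in order: $\Phi(T)$ is $\Npi$-compatible, $\Phi(T)$ is maximal, $\Phi$ is a bijection on the level of sets, and $\Phi$ intertwines mutations. Because the statement already asserts that Lemma~\ref{lem: continuous tilting preserves compatibility} ``immediately implies'' the theorem, the proof amounts to bookkeeping: the genuine content is in parts (a) and (b) of that lemma, and my job is to lift those from pairs of indecomposables to entire compatible sets.

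Compatibility of $\Phi(T)$ is directly Lemma~\ref{lem: continuous tilting preserves compatibility}(b) applied to each pair $(M_I,M_J)\in T\times T$. For maximality I would argue by the contrapositive. Suppose $M_{I'}\in\Indr(Q')\setminus\Phi(T)$. By Lemma~\ref{lem: continuous tilting preserves compatibility}(a), $M_{I'}=\phi(M_I)$ for a unique admissible $I$ in $Q$, and $M_I\notin T$ because $\phi$ is injective on $T$. By maximality of $T$ there is some $M_J\in T$ that is not $\Npi$-compatible with $M_I$. Applying Lemma~\ref{lem: continuous tilting preserves compatibility}(b) once more, $\phi(M_J)\in\Phi(T)$ is not $\Npi$-compatible with $M_{I'}$, so $\Phi(T)$ admits no proper extension.

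For bijectivity, I would observe that continuous tilting admits a natural inverse. The operation replaces $K=[-\infty,s)$ by $K^\ast=(-\infty,s]$ and fixes the remaining part $Z$ of $\widehat Q$; performing the analogous construction on $Q'$ gives an order-reversing bijection between $K^\ast$ and $K$ which, together with the identity on $Z$, inverts $\overline{\mathfrak t}$ on the level of endpoints, hence inverts $\phi$ on admissible intervals by Lemma~\ref{lem: continuous tilting preserves compatibility}(a). Pushing this through pointwise gives $\Phi^{-1}$.

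Finally, suppose $\mu:T\to T'$ is a mutation, so that $T'=(T\setminus\{M_I\})\cup\{M_J\}$ and $\mu$ is the identity on the remaining elements. Then $\Phi(T')=(\Phi(T)\setminus\{\phi(M_I)\})\cup\{\phi(M_J)\}$, and the map $\Phi(\mu)$ defined by $\phi(M_K)\mapsto\phi(\mu(M_K))$ is the identity on $\Phi(T)\setminus\{\phi(M_I)\}$ and sends $\phi(M_I)$ to $\phi(M_J)$, which is exactly the definition of a mutation. The only step requiring any thought is the construction of the inverse tilting, since $\widehat Q$ itself changes (a red interval becoming a blue one); once one has checked that reapplying the construction to $Q'$ restores the original data and inverts $\overline{\mathfrak t}$, the theorem reduces entirely to Lemma~\ref{lem: continuous tilting preserves compatibility}.
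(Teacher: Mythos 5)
Your proposal is correct and matches what the paper intends: the paper offers no explicit proof of this theorem, stating only that it follows ``immediately'' from Lemma~\ref{lem: continuous tilting preserves compatibility}, and your argument is precisely the bookkeeping (compatibility via part (b), maximality by contrapositive, bijectivity via the inverse tilt through $\overline{\mathfrak t}$, and the mutation check) that unpacks that claim.
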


\begin{proof}[Proof of Lemma \ref{lem: continuous tilting preserves compatibility}]
(a) Since $\overline{\mathfrak t}:\widehat Q\to \widehat Q'$ is a bijection and $\widehat I(a,b)$ is admissible by notation, we get a bijection with admissible $Q'$ intervals by definition.

(b) Suppose that $I=\widehat I(a,b)$ and $J=\widehat I(c,d)$ with $a\le c$ by symmetry. We use Lemma \ref{lem:Npi compatiblity as colors} to check $\Npi$-compatibility.
For this proof, we say ``$I$ and $J$ are compatible'' to mean ``$M_I$ and $M_J$ are $\Npi$-compatible''.
\begin{enumerate}
\item If $a,b,c,d$ are not distinct then $\overline{\mathfrak t}(a), \overline{\mathfrak t} (b), \overline{\mathfrak t}(c), \overline{\mathfrak t}(d)$ are also not distinct. So, $I,J$ are compatible for $Q$ and $I',J'$ are compatible for $Q'$ in this case. So, suppose $S=\{a,b,c,d\}$ has size $|S|=4$.
\item If $S\cap K=\emptyset$ then $I,J\subset Z$. So, $I'=I$ and $J'=J$ are compatible for $Q'$ if and only if $I,J$ are compatible for $Q$. 
\item If $|S\cap K|=1$ then $S\cap K=\{a\}$. Then $\overline{\mathfrak t}$ does not change the order of $a,b,c,d$ and does not change the colors of $b,c,d$. So, $I,J$ are compatible for $Q$ if and only if $I',J'$ are compatible for $Q'$. 
\item If $|S\cap K|=2$ there are three cases: (a) $a<b<c<d$, (b) $a<c<b<d$ or (c) $a<c<d<b$. If $I,J$ are in case (a) then so are $I',J'$ and both pairs are compatible. If $I,J$ are in case (b) then $I',J'$ are in case (c) and vise versa. Since the colors of $a,c$ change in both cases (from red to blue), $I,J$ are compatible for $Q$ if and only if $I',J'$ are compatible for $Q'$.
\item If $|S\cap K|=3$ there are the same three cases as in case (4). If $I,J$ are in case (a), then $I',J'$ are in case (c) and vise-versa. Since the middle two vertices are the same color, both pairs are compatible. If $I,J$ are in case (b) then so are $I',J'$ and both pairs are not compatible.
\item If $S\subset K$ then $a,b,c,d$ reverse order and all become blue. So, $I,J$ are compatible if and only if they are in cases (a) or (c) and $I',J'$ are in the same case and are also compatible.
\end{enumerate}
In all cases, $I,J$ are compatible for $Q$ if and only if $I',J'$ are compatible for $Q'$.
\end{proof}

We can relate continuous tilting to cluster theories, introduced by the authors and Todorov in \cite{IRT22b}.

\begin{definition}\label{def:cluster theory}
    Let $\mathcal{C}$ be an additive, $\Bbbk$-linear, Krull--Remak--Schmidt, skeletally small category and let $\mathbf{P}$ be a pairwise compatibility condition on the isomorphism classes of indecomposable objects in $\mathcal{C}$.
    Suppose that for any maximally $\mathbf{P}$-compatible set $T$ and $X\in T$ there exists at most one $Y\notin T$ such that $(T\setminus\{X\})\cup\{Y\}$ is $\mathbf{P}$-compatible.
    
    Then we call maximally $\mathbf{P}$-compatible sets \textdef{$\mathbf{P}$-clusters}.
    We call bijections $\mu:T\to (T\setminus\{X\})\cup\{Y\}$  of $\mathbf{P}$-clusters \textdef{$\mathbf{P}$-mutations}.
    We call the groupoid whose objects are $\mathbf{P}$-clusters and whose morphisms are $\mathbf{P}$-mutations (and identity functions) the \textdef{$\mathbf{P}$-cluster theory of $\mathcal{C}$.}
    We denote this groupoid by $\mathscr{T}_{\mathbf{P}}(\mathcal{C})$ and denote the inclusion functor into the category of sets and functions by $I_{\mathcal{C},\mathbf{P}}:\mathscr{T}_{\mathbf{P}}(\mathcal{C})\to\mathcal{S}et$.
    We say $\mathbf{P}$ \textdef{induces} the $\mathbf{P}$-cluster theory of $\mathcal{C}$.
\end{definition}

The isomorphism of cluster theories was introduced by the second author in \cite{R22}.
\begin{definition}\label{def:isomorphism of cluster theories}
    An \textdef{isomorphism of cluster theories} is a pair $(F,\eta)$ with source $\mathscr{T}_{\mathbf{P}}(\mathcal{C})$ and target $\mathscr{T}_{\mathbf{Q}}(\mathcal{D})$.
    The $F$ is a functor $F:\mathscr{T}_{\mathbf{P}}(\mathcal{C})\to\mathscr{T}_{\mathbf{Q}}(\mathcal{D})$ such that $F$ induces a bijection on objects and morphisms.
    The $\eta$ is a natural transformation $\eta: I_{\mathcal{C},\mathbf{P}}\to I_{\mathcal{D},\mathbf{Q}}\circ F$ such that each component morphism $\eta_T:T\to F(T)$ is a bijection.
\end{definition}

We see that, for any continuous quiver $Q$ of type $\mathbb{A}$, the pairwise compatibility condition $\Npi$ induces the cluster theory $\mathscr{T}_{\mathbf{\Npi}}(\modr(Q))$.
The following corollary follows immediately from Theorem~\ref{thm: continuous tilting gives bijection on max compatible sets}.

\begin{corollary}[to Theorem~\ref{thm: continuous tilting gives bijection on max compatible sets}]
    For any pair of continuous quivers $Q$ and $Q'$ of type $\mathbb{A}$ with finitely many sinks and sources, there is an isomorphism of cluster theories $\mathscr{T}_{\mathbf{\Npi}}(\modr(Q))\to\mathscr{T}_{\mathbf{\Npi}}(\modr(Q'))$.
\end{corollary}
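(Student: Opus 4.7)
The plan is to leverage Theorem~\ref{thm: continuous tilting gives bijection on max compatible sets}, which supplies the combinatorial core: the bijection $\Phi$ on maximal $\Npi$-compatible sets together with the underlying bijection $\phi$ on indecomposables that intertwines mutations. What remains is to (i) package this data as an isomorphism of cluster theories in the sense of Definition~\ref{def:isomorphism of cluster theories}, and (ii) bridge the gap between a single continuous tilting, which only modifies the quiver near $-\infty$, and an arbitrary pair $(Q,Q')$.

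For step (i), I would define the functor $F: \mathscr{T}_{\Npi}(\modr(Q)) \to \mathscr{T}_{\Npi}(\modr(Q'))$ on objects by $F(T) = \Phi(T)$ and on mutations by $F(\mu) = \Phi(\mu)$. Bijectivity on both objects and morphisms is immediate from Theorem~\ref{thm: continuous tilting gives bijection on max compatible sets}, and functoriality is automatic because the only nontrivial morphisms in the groupoid are single mutations. The natural transformation $\eta: I_{\modr(Q),\Npi} \to I_{\modr(Q'),\Npi} \circ F$ would have components $\eta_T: T \to F(T)$ given by $M_I \mapsto \phi(M_I)$; each $\eta_T$ is a bijection because $\phi$ is, and naturality with respect to a mutation $\mu: T \to T'$ is exactly the formula $\Phi(\mu)(\phi(M_I)) = \phi(\mu(M_I))$ recorded in the theorem.

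For step (ii), I would argue that any pair $(Q,Q')$ can be joined by a finite sequence $Q = Q_0, Q_1, \ldots, Q_k = Q'$ of continuous tiltings. The hypothesis of finitely many sinks and sources bounds the length of this sequence; by a symmetric construction at $+\infty$, together with induction on the number of sinks and sources at which $Q_i$ differs from $Q'$, one transforms $Q$ into $Q'$ in finitely many tilting steps. Then I would verify that isomorphisms of cluster theories compose: given $(F_i, \eta_i) : \mathscr{T}_{\Npi}(\modr(Q_i)) \to \mathscr{T}_{\Npi}(\modr(Q_{i+1}))$, the composite functor is bijective on objects and morphisms, and the whiskered composite $(\eta_{i+1} \ast F_i) \circ \eta_i$ has componentwise bijections. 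Iterating along the sequence produced above yields the desired isomorphism.

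The main obstacle is step (ii): verifying that continuous tilting, as defined in the paper only near $-\infty$, can be iterated (together with its symmetric variant at $+\infty$) to reach every $Q'$ from a given $Q$. This amounts to a careful combinatorial bookkeeping argument on the finite ordered list of sinks and sources and how each tilting step renumbers it; step (i), by contrast, is essentially a direct repackaging of Theorem~\ref{thm: continuous tilting gives bijection on max compatible sets}.
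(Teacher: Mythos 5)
Your proposal matches the approach the paper has in mind, which is essentially the only reasonable one: repackage the data of Theorem~\ref{thm: continuous tilting gives bijection on max compatible sets} as an isomorphism of cluster theories in the sense of Definition~\ref{def:isomorphism of cluster theories}, then transport it along a chain of tiltings connecting $Q$ to $Q'$. The paper gives no proof at all (it states that the corollary ``follows immediately''), so there are no details to match; your step~(i) is the direct unwinding of the definition, and your step~(ii) supplies the missing reachability argument that the paper's ``immediately'' elides.

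Two small points on step~(ii). First, the chain cannot consist only of \emph{forward} continuous tiltings: tilting as defined requires $-\infty$ to be a sink in the source quiver and always converts it to a source, so from any fixed quiver at most one forward tilt at the $-\infty$ end is available. The chain must therefore interleave forward tiltings with their \emph{inverses} (or, equivalently, allow the edges $Q_i \to Q_{i+1}$ to be a tilting in either direction). This is harmless because an isomorphism of cluster theories is invertible, so Theorem~\ref{thm: continuous tilting gives bijection on max compatible sets} also supplies the isomorphism in the reverse direction; but your phrase ``a finite sequence of continuous tiltings'' should be read that way. Second, and as a consequence, the ``symmetric construction at $+\infty$'' is not actually necessary: forward and inverse $-\infty$-end tiltings already suffice (note the paper allows tilting at $s = +\infty$ itself, which flips the whole line). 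For instance, to pass between two quivers with the same sink at $-\infty$ and different first sources, one tilts both down to their common image and composes one map with the inverse of the other; adding interior critical points is achieved by an inverse tilt at a point strictly less than the current first sink, and the induction you describe on the number of discrepancies then closes. Since you correctly isolate this bookkeeping as the nontrivial content and sketch a workable route, the proposal is sound.
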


\subsection{Continuous tilting of stability conditions}\label{sec:continuous tilting of stability conditions}

Given a stability condition $\sigma$ for $Q$, we obtain a stability condition $\sigma'$ for $Q'$ having the property that the $\sigma'$-semistable modules are related to the $\sigma$-semistable modules for $Q$ by continuous tilting (the bijection $\Phi$ of Theorem \ref{thm: continuous tilting gives bijection on max compatible sets}). Later we will see that these stability conditions give the same measured lamination on the Poincare disk.

We continue with the notation from sections \ref{sec 3.1} and \ref{sec:continuous tilting on modules} above. If the stability condition $\sigma$ on $Q$ is given by the red-blue pair $(R,B)$, the tilted stability condition $\sigma'$ on $Q'$ will be given by $(R',B')$ given as follows.
\begin{enumerate}
	\item The pair $(R',B')$ will be the same as $(R,B)$ on $[s,\infty)$.
	\item On $K'=(-\infty,s_-]\subseteq \widehat Q'$, the new red function $R'$ will be constantly equal to $R_-(s)$.
	\item On $K'=(-\infty,s_-]$, the new blue function $B'$ can be given by ``flipping'' $R$ horizontally and flipping each ``island'' vertically, in either order.
\end{enumerate}

\begin{notation}\label{note:sub minus and sub plus}
    Let $F$ be a useful function.
    By $F_-(a)$ we denote $\lim_{x\to a^-} F(a)$, for any $a\in (-\infty, +\infty]$.
    By $F_+(a)$ we denote $\lim_{x\to a^+} F(a)$, for any $a\in [-\infty, +\infty)$.
\end{notation}

\begin{definition}
A (red) \textbf{island} in $K=[-\infty,s)\subseteq \widehat Q$ is an open interval $(x,y)$ in $K$ which is either:
\begin{enumerate}
	\item $(x,r)$ where $x<r$ so that $R(x)\ge R_-(s)$ and $R(z)<R_-(s)$ for all $x<z<s$ or
	\item $(x,y)$ where $x<y<s$, $R(x)\ge R(y)\ge R_-(s)$, $R(z)<R(y)$ for all $x<z<y$ and $R(w)\le R(y)$ for all $y<w<s$.
\end{enumerate}
\end{definition}

\begin{lemma}\label{lem: when z is in an island} 
$z\in (-\infty,s)$ is in the interior of some island in $K$ if and only if there exists $y\in (z,s)$ so that $R(z)<R(y)$.
\end{lemma}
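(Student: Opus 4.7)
The plan is to prove both directions.

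The forward direction is nearly immediate from the island definitions. If $z$ lies in the interior of a type~2 island $(x,y^*)$, then by definition $R(z)<R(y^*)$ and $z<y^*<s$, so $y=y^*$ works. If instead $z$ lies in the interior of a type~1 island with right endpoint $s$, then $R(z)<R_-(s)$, and since $R_-(s)=\lim_{y\to s^-}R(y)$ there exists $y\in(z,s)$ close enough to $s$ with $R(y)>R(z)$.

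For the reverse direction, suppose $R(z)<R(y_0)$ for some $y_0\in(z,s)$. I would first record the preliminary observation that on the interval $(-\infty,s)$, which is red because $-\infty$ is a sink and $s$ is the smallest source (or lies before any source), Definition~\ref{def:red-blue function pair}(\ref{def:red-blue function pair:B constant on red}) forces $B$ to be constant, condition~(\ref{def:red-blue function pair:equal at infinities}) gives $B(-\infty)=R(-\infty)$, and condition~(\ref{def:red-blue function pair:graphs don't cross}) gives $R(x)\le\maxx{B}(x)$. Combining these yields $R(x)\le R(-\infty)$ for every $x\in(-\infty,s)$; this single inequality is what guarantees that the sets whose suprema are used below are non-empty.

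I would then split into two cases. \textbf{Case 1:} $R(w)<R_-(s)$ for all $w\in(z,s)$. Then $R(z)<R(y_0)<R_-(s)$, and I set $x^*=\sup\{w\le z:R(w)\ge R_-(s)\}$, a non-empty set containing $-\infty$. Using Lemma~\ref{lem:local max and local min}(\ref{lem:local max and local min:closed box}) together with the defining equality $\maxx{R}=R$ from condition~(\ref{def:red-blue function pair:make u and v easy}), I obtain $R(x^*)\ge R_-(s)$ and the strict inequality $x^*<z$: any accumulation of the set at $z$ from the left would force the left limit of $R$ at $z$ to be $\ge R_-(s)$, hence $\maxx{R}(z)\ge R_-(s)>R(z)$, contradicting $\maxx{R}(z)=R(z)$. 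Then $(x^*,s)$ is a type~1 island containing $z$ in its interior. \textbf{Case 2:} some $y\in(z,s)$ satisfies $R(y)\ge R_-(s)$. I let $y^*$ be the leftmost point in $[z,s)$ at which $R$ attains its supremum over $[z,s]$ (with $R(s)$ interpreted as $R_-(s)$). A further application of Lemma~\ref{lem:local max and local min} shows this supremum is actually achieved in $[z,s)$: a sup exceeding $R_-(s)$ must be attained bounded away from $s$, while under the Case~2 assumption a sup equal to $R_-(s)$ is likewise attained at some interior point. Then $y^*>z$ because $R(y^*)>R(z)$. Defining $x^*=\sup\{w\le z:R(w)\ge R(y^*)\}$ by the same recipe yields $(x^*,y^*)$ as a type~2 island containing $z$ in its interior.

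The main technical obstacle is verifying that the suprema defining $x^*$ and the leftmost maximum $y^*$ are actually attained as values of $R$ and not merely approached as limits, which in each instance requires the simultaneous use of graph closedness on compact intervals (Lemma~\ref{lem:local max and local min}(\ref{lem:local max and local min:closed box})) and the constraint $\maxx{R}(x)=R(x)$ of the red-blue function pair axioms. With these in hand the remaining checks against the island definitions are routine.
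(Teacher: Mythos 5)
Your proposal takes essentially the same route as the paper. The paper's proof of the reverse direction sets $r=\sup\{R(y):y\in(z,s)\}$ and splits on whether this supremum is attained in $(z,s)$ (giving a type~2 island $(x,y^*)$ with $y^*$ the leftmost maximizer) or not (in which case a sequence $y_i\to s$ forces $r=R_-(s)$ and the island is of type~1, $(x,s)$); your Cases~1 and~2 are exactly these two alternatives, just parametrized by whether $R(w)<R_-(s)$ throughout $(z,s)$, which is equivalent once one notes $\sup_{(z,s)}R\ge R_-(s)$ always. The paper, like you, passes through Lemma~\ref{lem:local max and local min}(\ref{lem:local max and local min:closed box}) and $\maxx{R}=R$ to show the relevant extrema are achieved; where you add genuine value is in explicitly addressing the non-emptiness of the set defining the left endpoint $x^*$ via the preliminary bound $R(x)\le R(-\infty)$, a point the paper simply takes for granted when it writes $x=\max\{w<z:R(w)\ge r\}$. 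One small caveat on that preliminary observation: passing from ``$B$ is constant on $(-\infty,s)$'' to $B(x)=B(-\infty)$ implicitly uses $v_{-\infty}^+=0$, which Definition~\ref{def:red-blue function pair}(\ref{def:red-blue function pair:B constant on red}) technically stipulates only for $x\in\RR$ inside the red interval, not at the sink $-\infty$ itself; this is at worst a shared implicit convention of the paper rather than a new gap you have introduced.
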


\begin{proof}
$(\Rightarrow)$ If $z$ lies in the interior of an island $(x,y)$ there are two cases. (1) For $y<s$, $R(z)<R(y)$.
(2) For $y=s$, $R(z)<R_-(s)$. But $R_-(s)$ is a limit, so there is a $y<s$ arbitrarily close to $s$ so that $R(z)<R(y)$ and $z<y<s$.

$(\Leftarrow)$ Let $y\in (z,s)$ so that $R(z)<R(y)$. Let $r=sup\{R(y)\,:\, y\in (z,s)\}$. If $r=R(y)$ for some $y\in (z,s)$, let $y$ be minimal. (By the 4 point condition there are at most 2 such $y$.) Then $z$ lies in an island $(x,y)$ for some $x<z$.

If the maximum is not attained, there exists a sequence $y_i$ so that $R(y_i)$ converges to $r$. Then $y_i$ converges to some $w\in [z,s]$. If $w\in (z,s)$ then $R(z)=r$ and we are reduced to the previous case. Since $R(z)<r$, $w\neq z$. So, $w=s$ and $r=R_-(s)$. Then $z$ lies in an island $(x,s)$ for some $s<z$. ($x=\max\{w<z\,:\,R(w)\ge r\}$) In both cases, $z$ lies in an island as claimed.
\end{proof}

To define the new blue function $B'$, we need a function $H$ defined as follows.
\[
	H(z):=\begin{cases} R(y) & \text{if $z\in (x,y]$ for some island $(x,y)$ where $y<s$}\\
	R_-(s) & \text{if $z\in (x,s)$ and $(x,s)$ is an island}\\
   R(z) & \text{for all other $z\in[-\infty,s)$}
    \end{cases}
\]

\begin{remark}\label{rem: when z is in an island}
Note that $H(z)> R(z)$ if $z$ is in the interior of an island and $H(z)=R(z)$ otherwise.
\end{remark}

\begin{lemma}\label{lem: H is nonincreasing}
$H$ is a nonincreasing function, i.e., $H(x)\ge H(y)$ for all $x<y<s$. Also, $H(z)=H_-(z)=\lim_{y\to z-}H(y)$ for all $z<s$ and $H_-(s)=R_-(s)$.
\end{lemma}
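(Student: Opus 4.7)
My plan is to split the proof into monotonicity and left-continuity, leveraging the trichotomy among points $z\in[-\infty,s)$: either (i) $z$ lies in no island interior, so $H(z)=R(z)$ and $R(y)\le R(z)$ for all $y\in(z,s)$ by Lemma~\ref{lem: when z is in an island}; (ii) $z$ lies in a type-(ii) island $(x,y)$ with $y<s$, so $H(z)=R(y)$ and $R(w)\le R(y)$ for $w\in[y,s)$; or (iii) $z$ lies in a type-(i) island $(x,s)$, so $H(z)=R_-(s)$ and $R(w)<R_-(s)$ for $w\in(x,s)$. A useful preliminary observation is that the left endpoint $x$ of any island cannot itself lie in the interior of another island, because the island inequalities force $R(w)\le R(x)$ for all $w\in(x,s)$, which by Lemma~\ref{lem: when z is in an island} places $x$ outside every island interior.

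For monotonicity, given $z_1<z_2<s$, I would enumerate the case combinations---reduced by symmetry and the preliminary observation to a handful---and verify $H(z_1)\ge H(z_2)$ using the island inequalities directly. For instance, if $z_2\in(x_2,y_2)$ and $z_1\le x_2$, then $H(x_2)=R(x_2)\ge R(y_2)=H(z_2)$, and one further bounds $H(z_1)\ge H(x_2)$ by appealing to the case where the right point is outside any island. If $z_1\in(x_2,z_2)$ then $z_1$ lies in the same island as $z_2$ and $H(z_1)=H(z_2)$. The remaining cases, where $z_2$ is itself outside any island interior, follow immediately from Lemma~\ref{lem: when z is in an island}.

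For left-continuity I would establish the reformulation
\begin{equation*}
H(z)=\max\!\left(\sup_{w\in[z,s)}R(w),\ R_-(s)\right),
\end{equation*}
which is immediate from the three cases. Then for $y<z$ one has $H(y)=\max\!\bigl(\sup_{w\in[y,z)}R(w),\,H(z)\bigr)$, and since $R$ is useful, Proposition~\ref{prop:useful functions are useful}(\ref{prop:useful functions are useful:left limit}) guarantees that the left limit $R_-(z)$ exists. A standard $\varepsilon$-$\delta$ argument using the left limit shows $\sup_{w\in[y,z)}R(w)\to R_-(z)\le R(z)\le H(z)$ as $y\to z^-$, so $H(y)\to H(z)$, giving $H_-(z)=H(z)$. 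The same calculation, this time taking $z\to s^-$ in the reformulation, yields $H_-(s)=R_-(s)$. The main obstacle I anticipate is the careful endpoint bookkeeping in the monotonicity step---distinguishing the two island types and handling edge cases such as $z_1=x_2$---after which the sup-reformulation and existence of left limits for useful functions make the left-continuity routine.
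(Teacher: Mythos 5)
Your approach is correct and genuinely different in structure from the paper's. The paper proves monotonicity by a contradiction argument (assume $H(u)<H(z)$, deduce $u$ lies in an island, then derive $H(u)=H(z)$), proves left-continuity by splitting into cases according to whether $z$ lies in an island, and proves $H_-(s)=R_-(s)$ by a separate contradiction using a sequence $z_i\to s^-$. Your sup-reformulation
\[
H(z)=\max\!\left(\sup_{w\in[z,s)}R(w),\ R_-(s)\right)
\]
is the organizing idea that the paper leaves implicit, and it handles all three claims uniformly: monotonicity is immediate from shrinking the domain of the sup (so your preliminary case analysis is actually redundant once the reformulation is in place), left-continuity follows from the identity $H(y)=\max(\sup_{[y,z)}R,\,H(z))$ together with $\sup_{[y,z)}R\to R_-(z)\le R(z)\le H(z)$ (the middle inequality being Definition~\ref{def:red-blue function pair}(1)), and $H_-(s)=R_-(s)$ falls out by sending $z\to s^-$. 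A small cleanup you could make: since $\sup_{[z,s)}R\ge\limsup_{w\to s^-}R(w)=R_-(s)$, the outer $\max$ is in fact superfluous and $H(z)=\sup_{w\in[z,s)}R(w)$. Overall your version trades the paper's ad hoc case-by-case arguments for a single structural identity, at the cost of having to verify that identity against the three-way trichotomy (which you sketch correctly, including the useful observation that a left endpoint of an island never lies in another island's interior).
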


\begin{remark}\label{rem: H>H+>R-(s)}
    Since $H$ is decreasing and converging to $R_-(s)$ we must have: $H(x)=H_-(x)\ge H_+(x)\ge R_-(s)$ for all $x<s$. 
\end{remark}

\begin{proof}
If $H(u)<H(z)$ for some $u<z<s$ then $R(u)\le H(u)<H(z)$. But $H(z)$ is equal to either $R(z), R_-(s)$ or $R(y)$ for some $y>z$. So, $R(u)<R(y)$ for some $y\in (u,s)$. By Lemma \ref{lem: when z is in an island}, $u$ lies in the interior of some island, say $(x,y)$ and, by definition of $H$, $H(u)=R(y)\ge R(w)$ for all $w\ge y$ and $H(u)=H(z)=H(y)$ for all $u\le z\le y$. Thus, $H$ is nonincreasing.

To see that $H(z)=H_-(z)$ suppose first that $z\in (x,y]$ for some island $(x,y)$. Then $H(z)=R(y)$ is constant on the interval $(x,y]$. So, $H(z)=H_-(z)=R(y)$. Similarly, $H(z)=H_-(z)$ if $z\in (x,s)$ and $(x,s)$ is an island. If $z$ is not in any island, $H(z)=R(z)$ and $R(z)=R_-(z)$ since, otherwise, $z$ would be on the right end of an island. And, $H_-(z)$ would be the limit of those $H(x)$ where $x<z$ and $H(x)=R(x)$. So, $H_-(z)=R_-(z)=H(z)$ as claimed.

Since $H(y)\ge R(y)$, we have: $H_-(s)=\lim_{y\to s-}H(y) \ge R_-(s)$. If $H_-(s)>R_-(s)$, say $H_-(s)=R_-(s)+c$ then there is a sequence $z_i\to s-$ so that $H(z_i)>R_-(s)+c/2$. For each $z_i$ there is $y_i\in [z_i,s)$ so that $H(z_i)=R(y_i)$. Then $R(y_i)>R_-(s)+c/2$ for all $i$ which is not possible since $y_i\to s-$. So, $H_-(s)=R_-(s)$.
\end{proof}

The monotonicity of $H$ implies that its variation $\var_HI$ on any interval $I$ is the difference of its limiting values on the endpoints. The formula is:
\[
	\var_H{(a,b)}=H_+(a)-H_-(b).
\]
Using $H=H_-$ and $H_+$ we can ``flip'' the islands up to get $\widetilde R$:
\[
	\widetilde R(z)=H(z)+H_+(z)-R(z).
\]
\begin{definition}\label{def:flipped functions}
The new blue function $B'$, shown in Figure \ref{Fig: flip R to B'}, is given on $K^\ast=(-\infty,s]$ by
\[
	B'(z)=\widetilde R({\mathfrak t}(z)).
\]
The new red function is constant on $K^\ast$ with value $R'(x)=R_-(s)$ for all $x\in K^\ast$. On the complement of $K^\ast$ in $\widehat Q'$, the red-blue pair $(R',B')$ is the same as before.
\end{definition}
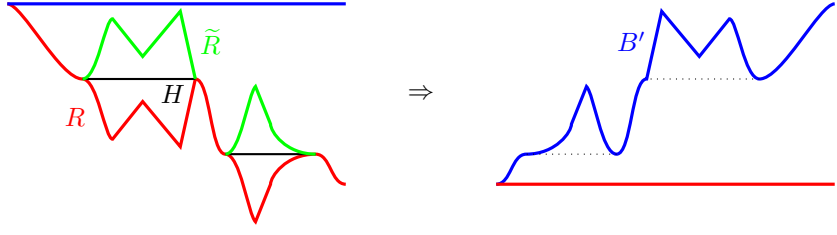
\begin{figure}[htbp]
\begin{center}
\begin{tikzpicture}
\coordinate (A) at (1,2); 
\coordinate (B) at (2.5,2);
\coordinate (C) at (2.9,1);
\coordinate (D) at (4.1,1);
\coordinate (E) at (4.5,0.6);
\coordinate (X) at (0,3); 
\coordinate (Y) at (4.5,3);
\coordinate (Z) at (5.5,1.8);
\coordinate (R) at (.9,1.5);
\coordinate (G) at (2.7,2.5);
\coordinate (H) at (2.2,1.8);
\draw[red] (R) node {$R$};
\draw[green] (G) node {$\widetilde R$};
\draw[black] (H) node {$ H$};
\draw[very thick,red] (X)..controls (.2,3) and (.7,2)..(A)..controls (1.2,2) and (1.3,1.2)..(1.4,1.2)--(1.8,1.7)--(2.3,1.1)--(B)..controls (2.7,2) and (2.7,1)..(C)..controls (3.1,1) and (3.2,0.2)..(3.3,.1)--(3.5,.6)..controls (3.5,0.7) and (3.7,1)..(D)..controls (4.3,1) and (4.3,0.6)..(E);
\draw[thick,black] (A)--(B) (C)--(D);
\draw[very thick,blue] (X)--(Y);
\draw[very thick,green] (A)..controls (1.2,2) and (1.3,2.8)..(1.4,2.8)--(1.8,2.3)--(2.3,2.9)--(B)
(C)..controls (3.1,1) and (3.2,1.8)..(3.3,1.9)--(3.5,1.4)..controls (3.5,01.3) and (3.7,1)..(D);
\draw (Z) node{$\Rightarrow$};
\begin{scope}[xshift=11cm, xscale=-1]
\draw[very thick,blue] (0,3)..controls (.2,3) and (.7,2)..(1,2)..controls (1.2,2) and (1.3,2.8)..(1.4,2.8)--(1.8,2.3)--(2.3,2.9)--(2.5,2)..controls (2.7,2) and (2.7,1)..(2.9,1)..controls (3.1,1) and (3.2,1.8)..(3.3,1.9)--(3.5,1.4)..controls (3.5,01.3) and (3.7,1)..(4.1,1)..controls (4.3,1) and (4.3,0.6)..(4.5,0.6)
(2.7,2.5) node{$B'$};
\draw[dotted] (1,2)--(2.5,2) (2.9,1)--(4.1,1);
\draw[very thick, red] (0,0.6)--(4.5,0.6);
\end{scope}
\end{tikzpicture}
\caption{The function $R$ is in red. $H$, black, flattens the islands of $R$. When the islands are flipped up, we get $\widetilde R$ in green. The horizontal mirror image of this is the new blue function $B'$ on the right. Figures \ref{fig:integral 1}, \ref{fig:integral 2} give another example.}
\label{Fig: flip R to B'}
\end{center}
\end{figure}

We will now show $B'$ is a useful function with the same variation on $(-\infty,s]$ as $R$ has on $[-\infty,s)$. More precisely:

\begin{lemma}\label{lem: B' has same variation as R}
The variation of $R$ on any open interval $(a,b)\subset [-\infty,s)$ is equal to the variation of $B'$ on $({\mathfrak t}(b), {\mathfrak t}(a))$.
\end{lemma}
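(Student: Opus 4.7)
The plan is to peel off the reparametrization by $\mathfrak{t}$, decompose the interval into island pieces and their complement, and check that total variation is preserved on each piece.

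First, since $\mathfrak{t}$ restricts to an order-reversing homeomorphism from $(a,b)$ onto $(\mathfrak{t}(b),\mathfrak{t}(a))$, and total variation is invariant under order-reversing reparametrization, the definition $B'(z) = \widetilde{R}(\mathfrak{t}(z))$ immediately gives
\[
\var_{B'}((\mathfrak{t}(b),\mathfrak{t}(a))) = \var_{\widetilde{R}}((a,b)).
\]
So the problem reduces to proving $\var_R((a,b)) = \var_{\widetilde{R}}((a,b))$.

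Next, I would decompose $(a,b)$ as $E \sqcup \bigsqcup_k (x_k,y_k)$, where $(x_k,y_k)$ runs over the maximal islands meeting $(a,b)$ (with at most two ``partial'' islands at the endpoints $a,b$ truncated to $(a,b)$, handled by the same formulas since the reflection identity below only uses the value $R(y_k)$). On each open island $(x_k,y_k)$, Lemma~\ref{lem: H is nonincreasing} together with the definition of $H$ gives $H \equiv R(y_k)$ on $(x_k,y_k)$ (or $H \equiv R_-(s)$ if we are in the type-(1) case $y_k = s$), hence $H_+ \equiv H$ on the open island, and
\[
\widetilde{R}(z) \;=\; H(z) + H_+(z) - R(z) \;=\; 2R(y_k) - R(z) \quad \text{for } z \in (x_k,y_k).
\]
This is a reflection about a horizontal line, which preserves total variation, so $\var_R((x_k,y_k)) = \var_{\widetilde{R}}((x_k,y_k))$.

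On the complement $E$, Lemma~\ref{lem: when z is in an island} and Remark~\ref{rem: when z is in an island} give $R|_E = H|_E$, so $\widetilde{R}|_E = H_+|_E$. By Lemma~\ref{lem: H is nonincreasing}, both $H$ and $H_+$ are non-increasing, and they differ only at the at most countably many points where $H$ has a right jump; at such a point the contribution to $\var_H$ and to $\var_{H_+}$ on any open interval avoiding that point agrees, and the jump itself contributes the same net decrease to both. Treating total variation as a positive Borel measure on $(a,b)$ and summing the island and complement contributions, one therefore recovers $\var_R((a,b)) = \var_{\widetilde{R}}((a,b))$.

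The main technical obstacle is the bookkeeping at the countably many island endpoints $\{x_k,y_k\}$ and at right-jumps of $H$ in $E$: I must verify that the jump of $R$ just inside an island at $x_k$ is matched by the jump of $\widetilde{R}$ just inside the island (and symmetrically at $y_k$), using the reflection identity on the open island together with the one-sided continuity $H(z) = H_-(z)$ from Lemma~\ref{lem: H is nonincreasing}. Once these boundary jumps are shown to contribute equally to $\var_R$ and $\var_{\widetilde{R}}$, summing the island pieces and the monotone pieces of $E$ gives the equality, and composing with $\mathfrak{t}$ yields the lemma.
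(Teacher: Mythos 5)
You have correctly identified the reduction to $\var_R(a,b) = \var_{\widetilde R}(a,b)$ via the order-reversing reparametrization by $\mathfrak{t}$, and the reflection identity $\widetilde R(z) = 2R(y_k) - R(z)$ on the open interior of an island is right and does preserve variation there. However, the ``bookkeeping'' you flag at the end is not a formality --- it is where the actual content of the proof lies --- and your description of what must be checked there is not correct. At a point $x_k$ that is the left endpoint of an island $(x_k,y_k)$ (so $x_k \in E$), one has $H(x_k)=R(x_k)$ but $H_+(x_k)=R(y_k)$, and unwinding the definition of $\widetilde R$ gives $v_{x_k}^+ = \widetilde R_+(x_k) - \widetilde R(x_k) = R(y_k) - R_+(x_k)$, which is \emph{strictly smaller} in absolute value than $u_{x_k}^+ = R_+(x_k) - R(x_k)$ whenever $R(x_k) > R(y_k)$. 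The missing amount, $R(x_k) - R(y_k)$, reappears in $|v_{x_k}^-|$. So the jumps do \emph{not} ``match just inside the island'' as you claim --- they are redistributed across the two sides of $x_k$, and one must account for both sides at once to conclude $|v_{x_k}^-| + |v_{x_k}^+| = |u_{x_k}^-| + |u_{x_k}^+|$ (this is essentially the content of Lemma~\ref{lem: B'=min B'}). Relatedly, the assertion that ``$R|_E = H|_E$ implies equal contribution to variation from $E$'' is false as stated: the local variation of $R$ at $x_k$ involves $R_+(x_k)$, a one-sided limit taken from \emph{inside} the island, so two functions agreeing pointwise on the closed set $E$ need not have equal variation measure on $E$.

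The paper avoids all of this by decomposing the \emph{function} rather than the \emph{domain}. It writes $R = H + (R-H)$ and $\widetilde R = H_+ + (\widetilde R - H_+)$, notes that $R - H = -(\widetilde R - H_+)$ directly from the definition of $\widetilde R$, and proves $\var_R(a,b) = \var_H(a,b) + \var_{R-H}(a,b)$ by an explicit finite-partition argument: at island boundaries the ordering $R(a_j)\ge H(x_i)>R(x_i)$ forces the relevant triangle inequality to be an equality, and within or outside islands the terms match trivially. The analogous identity for $\widetilde R$, together with $\var_H(a,b)=H_+(a)-H_-(b)=\var_{H_+}(a,b)$ by monotonicity, then yields the lemma. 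Your domain-decomposition plus measure-theoretic approach could in principle be repaired by carrying out the local-variation computation above at every island endpoint and being careful about atoms, but as written it contains a genuine gap, and the paper's function-decomposition argument is both more elementary and free of these subtleties.
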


\begin{proof}
Since $B'$ is obtained from $\widetilde R$ by reversing the order of the first coordinate, we have $\var_{B'}({\mathfrak t}(b), {\mathfrak t}(a))=\var_{\widetilde R}(a,b)$. Thus, it suffices to show that $\var_{\widetilde R}(a,b)=\var_{R}(a,b)$.

First, we do the case when $(a,b)$ is an island. Then $H(z)=H_+(z)=R(b)>R(z)$ are constant for all $z\in (a,b)$. So, $\widetilde R=H+H_+-R$ has the same variation as $R$ on $(a,b)$. 

Write $R=H+(R-H)$. Then we claim that 
\[
	\var_R(a,b)=\var_H(a,b)+\var_{R-H}(a,b).
\]
To see this take any sequence $a<x_0<x_1<\cdots<x_n<b$. Then the sum
\[
	\sum_{i=1}^n |R(x_i)-R(x_{i-1})|
\]
can be broken up into parts. Let $A_1,\cdots,A_m$ be the sequence of disjoint subsets of $S=\{x_0,\cdots,x_n\}$ so that $A_j$ is the intersection of $S$ with some island $(a_j,b_j)$. We may assume that $a_j$ for $1<j\le m$ and $b_j$ for $1\le j<m$ are in the set $S$ since they lie in the interval $(a,b)$. For $1<j\le m$, if $x_i$ is the smallest element of $A_j$, then $x_{i-1}=a_j$ and the $x_i,x_{i-1}$ term in the approximation of $\var_H(a,b)+\var_{H-R}(a,b)$ is
\[
    |H(a_j)-H(x_i)|+|(R-H)(a_j)-(R-H)(x_i)|=|R(a_j)-H(x_i)|+|H(x_i)-R(x_i)|
\]
since $H(a_j)=R(a_j)$. This sum is equal to
$
	|R(a_j)-R(x_i)|
$, 
the corresponding term in the approximation of $\var_R(a,b)$, since $R(a_j)\ge H(x_i)>R(x_i)$. Similarly, $H(b_j)=R(b_j)$ by definition and $R(b_j)=H(x_k)>R(x_k)$ for any $x_k\in A_j$. So,
\[
    |H(b_j)-H(x_k)|+|(R-H)(b_j)-(R-H)(x_k)|=|R(b_j)-R(x_k)|.
\]

If $x_i,x_{i+1}$ both lie in $A_j$ then $H(x_i)=H(x_{i+1})$. So, 
\[
    |R(x_i)-R(x_{i+1})|=|(R-H)(x_i)-(R-H)(x_{i+1})|+|H(x_i)-H(x_{i+1})|.
\]
This equation also holds if $x_i,x_{i+1}$ do not lie in any $A_j$ since, in that case, $R=H$ at both $x_i$ and $x_{i+1}$. Thus every term in the sum approximating $\var_R(a,b)$ is equal to the sum of the corresponding terms for $\var_H(a,b)$ and $\var_{R-H}(a,b)$. Taking supremum we get the equation $\var_R(a,b)=\var_H(a,b)+\var_{R-H}(a,b)$ as claimed.

A similar calculation shows that 
\[
	\var_{\widetilde R}(a,b)=\var_{H_+}(a,b)+\var_{\widetilde R-H_+}(a,b).
\]
But this is equal to $\var_R(a,b)=\var_H(a,b)+\var_{R-H}(a,b)$ since $H-R=\widetilde R-H_+$ by definition of $\widetilde R$ and $\var_H(a,b)=H_+(a)-H_-(b)=\var_{H_+}(a,b)$. Thus $\var_R(a,b)=\var_{\widetilde R}(a,b)=\var_{B'}(\mathfrak t(b),\mathfrak r(a))$.
\end{proof}

For $x_0$ in the interior of the domain of $f$ let
\[
\var_f(x_0):=\lim_{\delta\to 0} \var_f(x_0-\delta,x_0+\delta)=\lim_{\delta\to 0} \var_f[x_0-\delta,x_0+\delta]
\]
We call this the \textbf{local variation} of $f$ at $x_0$. If $x_0\in (a,b)$ this is equivalent to:
\[
	\var_f(x_0)=\var_f(a,b)-\var_f(a,x_0)-\var_f(x_0,b)
\]
since this is the limit of $\var_f(a,b)-\var_f(a,x_0-\delta]-\var_f[x_0+\delta,b)=\var_f[x_0-\delta,x_0+\delta]$.

To show that $B'$ is a useful function we need the following lemma.
\begin{lemma}\label{lem: continuous at x iff local variation is 0}
A real valued function $f$ of bounded variation defined in a neighborhood of $x_0$ is continuous at $x_0$ if and only if its local variation, $\var_f(x_0)=0$. In particular, $R$ is continuous at $x\in K$ if and only if $B'$ is continuous at $\mathfrak t(x)\in K^\ast$.
\end{lemma}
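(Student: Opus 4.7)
The plan is to split the lemma into its two claims and prove each by reducing to standard facts about bounded variation plus the key equality of variations provided by Lemma~\ref{lem: B' has same variation as R}.

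For the first claim, I would use the basic fact that a function $f$ of bounded variation on a neighborhood $(a,b)$ of $x_0$ has one-sided limits $f_-(x_0)$ and $f_+(x_0)$ at $x_0$ (these exist because the variation function $V(t):=\var_f(a,t)$ is nondecreasing and bounded, hence has one-sided limits, and $f$ inherits them). The local variation then decomposes as
\[
\var_f(x_0)=|f_-(x_0)-f(x_0)|+|f(x_0)-f_+(x_0)|,
\]
which one sees by writing $\var_f[x_0-\delta,x_0+\delta]$ as $\var_f[x_0-\delta,x_0]+\var_f[x_0,x_0+\delta]$, and taking $\delta\to 0$, noting that for a bounded variation function the variation on $[x_0-\delta,x_0]$ converges to $|f_-(x_0)-f(x_0)|$ (and similarly on the right). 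From this formula the equivalence ``$f$ continuous at $x_0$ $\Leftrightarrow$ $\var_f(x_0)=0$'' is immediate, since both one-sided jumps vanish if and only if their sum does.

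For the second claim, I would apply Lemma~\ref{lem: B' has same variation as R} to a small symmetric interval $(x-\delta,x+\delta)\subset K$: its image under the order-reversing homeomorphism $\mathfrak t$ is an interval of the form $(\mathfrak t(x+\delta),\mathfrak t(x-\delta))$, which is a neighborhood of $\mathfrak t(x)$, and by that lemma
\[
\var_R(x-\delta,x+\delta)=\var_{B'}(\mathfrak t(x+\delta),\mathfrak t(x-\delta)).
\]
Since $\mathfrak t$ is continuous, as $\delta\to 0$ both endpoints on the right converge to $\mathfrak t(x)$, so the right-hand side tends to $\var_{B'}(\mathfrak t(x))$ while the left-hand side tends to $\var_R(x)$. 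Hence $\var_R(x)=\var_{B'}(\mathfrak t(x))$, and combining with the first claim gives the equivalence of continuity.

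The only mild subtlety—really the main thing to verify carefully—is the decomposition formula $\var_f(x_0)=|f_-(x_0)-f(x_0)|+|f(x_0)-f_+(x_0)|$, which requires knowing that for a bounded variation function the total variation on $[x_0-\delta,x_0]$ converges to the magnitude of the one-sided jump (equivalently, that the variation function $V$ has the same jump at $x_0$ as $f$ does, in absolute value on each side). This is standard, but it is the one step where some care is needed; everything else reduces to continuity of $\mathfrak t$ and the already-established Lemma~\ref{lem: B' has same variation as R}.
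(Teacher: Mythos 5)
Your proof is correct but takes a genuinely different route from the paper's for the main equivalence. The paper argues directly from the definition of variation: for the easy direction, small local variation immediately forces small oscillation; for the harder direction (continuity implies $\var_f(x_0)=0$), the paper picks a near-optimal partition of $[x_0,x_0+\delta)$, observes that its first term $|f(x_1)-f(x_0)|$ is small by continuity, and deduces $\var_f[x_0,x_1)<2\varepsilon$, repeating on the left. No one-sided limits or jump formula are invoked. You instead route both directions through the identity $\var_f(x_0)=|f_-(x_0)-f(x_0)|+|f(x_0)-f_+(x_0)|$, which is standard but not entirely trivial (essentially that the total-variation function of $f$ has, in absolute value, the same one-sided jumps as $f$). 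Both arguments are valid; the paper's is more elementary and self-contained, while yours is conceptually cleaner --- and indeed your identity is exactly what the paper records afterward as Proposition~\ref{prop: local variation at a is u+(a) + u-(a)} for useful functions, whose proof cites the present lemma; you avoid circularity by deriving it instead from general BV theory. For the ``in particular'' clause, the paper leaves the step implicit; you spell it out correctly via Lemma~\ref{lem: B' has same variation as R} applied to shrinking intervals around $x$. One small point you might make explicit: the image intervals $(\mathfrak t(x+\delta),\mathfrak t(x-\delta))$ are not symmetric around $\mathfrak t(x)$, but since variation is monotone on nested intervals, the limit over these intervals still computes $\var_{B'}(\mathfrak t(x))$.
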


\begin{proof}
Suppose that $\var_f(x_0)=0$. Then, for any $\varepsilon>0$ there is a $\delta>0$ so that 
\[
	\var_f(x_0-\delta,x_0+\delta)<\varepsilon.
\]
Then $|f(x)-f(x_0)|<\varepsilon$ for all $x\in (x_0-\delta,x_0+\delta)$. So, $f$ is continuous at $x_0$.

Conversely, suppose $f$ is continuous at $x_0$. Then, for any $\varepsilon>0$ there is a $\delta>0$ so that $|f(x)-f(x_0)|<\varepsilon$ for $|x-x_0|<\delta$. Let $V=\var_f[x_0,x_0+\delta)$. By definition of variation there exist $x_0<x_1<\cdots<x_n<x_0+\delta$ so that
\[
	\sum_{i=1}^n |f(x_i)-f(x_{i-1})|>V-\varepsilon.
\]
Since $|f(x_1)-f(x_0)|<\varepsilon$ this implies $\sum_{i=2}^n |f(x_i)-f(x_{i-1})|>V-2\varepsilon$. So, $\var_f[x_0,x_1)<2\varepsilon$. Similarly, there exists $x_{-1}<x_0$ so that $\var_f(x_{-1},x_0)<2\varepsilon$. So, $\var_f(x_{-1},x_1)<4\varepsilon$ which is arbitrarily small.
\end{proof}

For a useful function $F$, recall that $u^-_a=F(a)-\lim_{x\to a-}F(x)$ and $u_a^+=\lim_{x\to a+}F(x)-F(a)$ (Proposition~\ref{prop:useful functions are useful}).
\begin{proposition}\label{prop: local variation at a is u+(a) + u-(a)}
Let $F$ be a useful function.
Then, the local variation of $F$ at any point $a$ is
\[
	\var_F(a)=|u_a^-|+|u_a^+|.
\]
\end{proposition}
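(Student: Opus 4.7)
The plan is to bound $\var_F(a)$ from above and below by $|u_a^-|+|u_a^+|$, using the decomposition $F=f+J^-+J^+$, where $J^-=\sum_x u_x^-\Delta_x^-$ and $J^+=\sum_x u_x^+\Delta_x^+$, together with Lemma~\ref{lem: continuous at x iff local variation is 0} and the convergence of $\sum|u_x^\pm|$.

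For the upper bound, I would fix a partition $a-\delta=x_0<x_1<\cdots<x_n=a+\delta$ and compute
\[
F(x_i)-F(x_{i-1})=\bigl(f(x_i)-f(x_{i-1})\bigr)+\sum_{x_{i-1}<y\le x_i}u_y^-+\sum_{x_{i-1}\le y<x_i}u_y^+
\]
directly from the definitions of $\Delta_x^\pm$. Applying the triangle inequality and summing telescopically in the jump terms yields
\[
\sum_{i=1}^n|F(x_i)-F(x_{i-1})|\le\var_f[a-\delta,a+\delta]+\sum_{a-\delta<y\le a+\delta}|u_y^-|+\sum_{a-\delta\le y<a+\delta}|u_y^+|,
\]
and taking the supremum over partitions gives the same bound for $\var_F[a-\delta,a+\delta]$. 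Since $f$ is continuous at $a$, Lemma~\ref{lem: continuous at x iff local variation is 0} forces $\var_f[a-\delta,a+\delta]\to 0$ as $\delta\to0$. Because $\sum|u_x^-|$ and $\sum|u_x^+|$ converge, the jump contributions from points $y\ne a$ inside $(a-\delta,a+\delta)$ tend to $0$ as $\delta\to 0$, so only $|u_a^-|$ and $|u_a^+|$ survive. This yields $\limsup_{\delta\to 0}\var_F[a-\delta,a+\delta]\le|u_a^-|+|u_a^+|$.

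For the lower bound, I would use the three-point partition $\{a-\delta,a,a+\delta\}$, which gives
\[
\var_F[a-\delta,a+\delta]\ge|F(a)-F(a-\delta)|+|F(a+\delta)-F(a)|.
\]
By Proposition~\ref{prop:useful functions are useful}(\ref{prop:useful functions are useful:limit equations}), as $\delta\to 0$ we have $F(a-\delta)\to F(a)-u_a^-$ and $F(a+\delta)\to F(a)+u_a^+$, so the right-hand side tends to $|u_a^-|+|u_a^+|$. Combining the two bounds gives $\var_F(a)=|u_a^-|+|u_a^+|$.

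The only delicate point is the bookkeeping in the upper bound: one must be careful that the half-$\delta$ at the endpoint $a$ is captured in exactly one of the two sums (the $u_a^-$ term appears in $\sum_{x_{i-1}<y\le x_i}$ whenever $a\in(x_{i-1},x_i]$, while $u_a^+$ appears in $\sum_{x_{i-1}\le y<x_i}$ whenever $a\in[x_{i-1},x_i)$), and that the ``tail'' jump sums genuinely vanish as $\delta\to 0$ — but this is immediate from absolute convergence, so there is no real obstacle.
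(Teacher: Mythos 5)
Your proof is correct, but it takes a more hands-on route than the paper's. The paper first records the sandwich
\[
|\var_g(x)-\var_f(x)|\le \var_{f+g}(x)\le \var_f(x)+\var_g(x),
\]
then sets $g_a=u_a^-\Delta_a^-+u_a^+\Delta_a^+$, observes that $F-g_a$ is continuous at $a$ (its jumps at $a$ vanish, so Proposition~\ref{prop:useful functions are useful}(\ref{prop:useful functions are useful:limit equations}) forces the one-sided limits to agree with the value), and applies Lemma~\ref{lem: continuous at x iff local variation is 0} to conclude $\var_{F-g_a}(a)=0$; the sandwich then collapses to $\var_F(a)=\var_{g_a}(a)=|u_a^-|+|u_a^+|$. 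You instead split $F=f+J^-+J^+$, prove the upper bound by estimating partition sums directly (telescoping the two jump sums over $(a-\delta,a+\delta]$ and $[a-\delta,a+\delta)$, then invoking Lemma~\ref{lem: continuous at x iff local variation is 0} applied to the continuous part $f$ and absolute convergence of the jump series), and prove the lower bound via a three-point partition together with Proposition~\ref{prop:useful functions are useful}(\ref{prop:useful functions are useful:limit equations}). Both arguments ultimately rest on the same two ingredients (the continuity-iff-zero-local-variation lemma and the summability of $|u_x^\pm|$), but the paper applies the lemma to the \emph{modified} function $F-g_a$ and lets the triangle-inequality sandwich do the bookkeeping, whereas you apply it to the original continuous part $f$ and do the bookkeeping explicitly at the level of partitions. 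The paper's version is shorter and reuses a reusable inequality on local variations; yours is more elementary and self-contained, and it makes visible exactly where each half-jump contributes, which is arguably clearer pedagogically. Either way the argument is complete; the one subtlety you flagged — that $u_a^-$ lands in the $\Delta^-$ sum exactly once and $u_a^+$ in the $\Delta^+$ sum exactly once — is handled correctly by your choice of half-open intervals.
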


\begin{proof}
It follows from the triangle inequality that the variation of $f+g$ on any open interval is bounded above and below by the sum and differences of the variations of $f,g$ on that interval. This holds for local variations as well:
\[
	|\var_g(x)-\var_f(x)|\le \var_{f+g}(x)\le \var_f(x)+\var_g(x)
\]
Let $g_x=u_x^-\Delta_x^-+u_x^+\Delta_x^+$. Then
\[
	\var_F(x)=\var_{g_x}(x)=|u_x^-|+|u_x^+|
\]
since $F-g_x$ is continuous at $x$ and thus, by Lemma \ref{lem: continuous at x iff local variation is 0}, has $\var_{F-g_x}(x)=0$.
\end{proof}

We can say slightly more for the functions $R$ and $B'$. (See also Figure \ref{Fig6: flipping spikes}.)

\begin{lemma}\label{lem: B'=min B'}
For any $a\in K=[-\infty,s)$ let $b=\mathfrak t(a)\in K^\ast$. Then $v_b^-=B'(b)-B'_-(b)\le0$ and $v_b^+=B'_+(b)-B'(b)\ge0$. In particular, $B'(b)=min\,B'(b)$. 
\end{lemma}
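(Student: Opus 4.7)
The plan is to transfer the question from $B'$ back to $\widetilde{R}$. Since $\mathfrak{t}\colon K\to K^\ast$ is strictly order-reversing and $B'(z)=\widetilde{R}(\mathfrak{t}(z))$, we have $B'_-(b)=\widetilde{R}_+(a)$ and $B'_+(b)=\widetilde{R}_-(a)$. The desired inequalities $v_b^-\le 0$ and $v_b^+\ge 0$ are therefore equivalent to showing
\[
    \widetilde{R}(a)\le \widetilde{R}_-(a) \qquad \text{and} \qquad \widetilde{R}(a)\le \widetilde{R}_+(a).
\]

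Next I would exploit $\widetilde{R}=H+H_+-R$. Lemma~\ref{lem: H is nonincreasing} gives that $H$ is non-increasing with $H=H_-$; a short calculation then shows that $H_+$ is also non-increasing, right-continuous, and satisfies $\lim_{z\to a^-}H_+(z)=H(a)$. Combined with $R(a)=\maxx{R}(a)$ (part of the definition of a red-blue pair), this yields
\begin{align*}
    \widetilde{R}(a) &= H(a)+H_+(a)-R(a), \\
    \widetilde{R}_-(a) &= 2H(a)-R_-(a), \\
    \widetilde{R}_+(a) &= 2H_+(a)-R_+(a).
\end{align*}
Consequently
\[
    \widetilde{R}_-(a)-\widetilde{R}(a) = [H(a)-H_+(a)] + [R(a)-R_-(a)],
\]
a sum of two non-negative terms, which immediately gives $v_b^+\ge 0$.

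The remaining inequality $\widetilde{R}(a)\le \widetilde{R}_+(a)$ simplifies to the key claim
\[
    H(a)-H_+(a)\le R(a)-R_+(a),
\]
which I would verify by a short case analysis on the position of $a$ relative to the red islands of $K$:
\begin{itemize}
\item If $a$ lies in the complement of the closure of every island, then $H=R$ on a neighborhood of $a$, so both sides coincide.
\item If $a$ is strictly inside an island, then $H$ is locally constant and the left side is $0$.
\item If $a=x$ is the left endpoint of an island $(x,y)$ with $y<s$ (resp.\ $(x,s)$), then $H_+(a)=R(y)$ (resp.\ $R_-(s)$), and the defining property $R(z)<R(y)$ (resp.\ $R(z)<R_-(s)$) on the interior of the island forces $R_+(a)\le R(y)$ (resp.\ $R_+(a)\le R_-(s)$), giving the inequality.
\item If $a=y$ is the right endpoint of an island $(x,y)$ with $y<s$, then either no island starts at $a$ and $H_+(a)=R_+(a)$ (equality), or an adjacent island $(a,y')$ exists and $H_+(a)=R(y')$, with $R_+(a)\le R(y')$ again by the island's defining condition.
\end{itemize}

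The main obstacle is really just making sure that every possible configuration at island boundaries (including adjacent islands sharing an endpoint) is accounted for in the case split; each individual case is a direct unpacking of the definition of an island. Once this inequality is established, we get $v_b^-\le 0$, and together with $v_b^+\ge 0$ this shows $\minn{B'}(b)=B'(b)$ as required.
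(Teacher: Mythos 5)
Your overall strategy matches the paper's: transfer via the order-reversing involution to reduce the claim to $\widetilde{R}(a)\le\widetilde{R}_-(a)$ and $\widetilde{R}(a)\le\widetilde{R}_+(a)$, compute $\widetilde{R}_\pm(a)$ using $H=H_-$ and the right-continuity of $H_+$, and observe that $\widetilde{R}_-(a)-\widetilde{R}(a)=[H(a)-H_+(a)]+[R(a)-R_-(a)]$ is a sum of nonnegative terms (using that $H$ is nonincreasing and that $\maxx{R}(a)=R(a)$). All of that is correct and agrees with the paper's computation.

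The gap is in your first bullet for the second inequality: you assert that when $a$ lies outside the closure of every island, $H=R$ on a neighborhood of $a$. That need not hold. Islands can accumulate at $a$ from the right, so that $a$ is a limit of island left endpoints $x_n$ while belonging to no island's closure; then $H>R$ on island interiors arbitrarily close to $a$, and $H_+(a)$ need not equal $R_+(a)$. The same caveat affects the ``no island starts at $a$'' subcase of your fourth bullet. The inequality $H(a)-H_+(a)\le R(a)-R_+(a)$ still holds, but for a reason you don't invoke: whenever $a$ is not in the interior of any island, Remark~\ref{rem: when z is in an island} gives $H(a)=R(a)$, and $H\ge R$ pointwise gives $H_+(a)\ge R_+(a)$, so the inequality is immediate. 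The paper's proof uses exactly this two-case dichotomy (interior of an island vs.\ not), which collapses your four bullets into two and sidesteps the accumulation issue entirely. Your remaining bullets (interior point, and endpoint with an adjacent island) are fine.
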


\begin{proof} Since $B'$ is the mirror image of $\widetilde R$, $v_b^-$ and $v_b^+$ for $B'$ are equal to $-v_a^+,-v_a^-$ for $\widetilde R$, respectively, where $v_a^-=\widetilde R(a)-\widetilde R_-(a)$ and $v_a^+=\widetilde R_+(a)-\widetilde R(a)$. Thus it suffices to show that $v_a^-\le0$ and $v_a^+\ge0$.

We have $u_a^-=R(a)-R_-(a)\ge0$. Also, $\widetilde R_-=(H+H_+-R)_-=2H-R_-$. So,
\[
\begin{split}
	v_a^-&=(\widetilde R(a)-H_+(a))-(\widetilde R_-(a)-H_+(a))\\
	&= (H(a)-R(a))+R_-(a)-2H(a)+H_+(a)\\
	&=-u_a^--(H(a)-H_+(a))\le0
	\end{split}
\]

Similarly, we have $u_a^+=R_+(a)-R(a)\le0$ and $\widetilde R_+(a)=2H_+(a)-R_+(a)$. So,
\[\begin{split}
	v_a^+&=(\widetilde R_+(a)-H_+(a))-(\widetilde R(a)-H_+(a))\\
	&= (H_+(a)-R_+(a))-(H(a)-R(a))\\
	&= (H_+(a)-H(a))-u_a^+
\end{split}
\]
To show that $v_a^+\ge0$, there are two cases. If $a$ lies in an island $(x,y)$, then $H_+(a)=H(a)=R(y)$ (or $R_-(s)$ if $y=s$) and $v_a^+=-u_a^+\ge0$. If $a$ does not lie in an island then $H(a)=R(a)$ and $H_+(a)\ge R_+(a)$. So, $v_a^+\ge0$.
\end{proof}

\begin{figure}[htbp]
\begin{center}
\begin{tikzpicture}
%
\begin{scope}
\coordinate (A) at (0,3); 
\coordinate (B) at (1,2.5);
\coordinate (B1) at (1,2.5);
\coordinate (B2) at (1,2);
\coordinate (B3) at (1,1);
\coordinate (C) at (2.9,1);
\coordinate (C2) at (2,2);
\coordinate (Z2) at (1.5,2);
\coordinate (C3) at (2,1.5);
\coordinate (C4) at (2,1);
\coordinate (D) at (3,.5);
\coordinate (X) at (1,.85);
\coordinate (Y) at (2,.85);
\coordinate (Z) at (.1,2.5);
\draw[very thick,red] (A)--(B1)--(B3)--(C3)--(C2)--(C4)--(D);
\draw[red,fill] (B1) circle[radius=1mm];
\draw[red,fill] (C2) circle[radius=1mm];
\draw (X) node[below]{$a$};
\draw (Y) node[below]{$b$};
\draw (Z2) node[above]{$H$};
\draw[thick,dashed] (B2)--(C2);
\draw[red] (Z) node{$R$};
\end{scope}
\begin{scope}[xshift=3.5cm]
\coordinate (A) at (0,3); 
\coordinate (B) at (1,2.5);
\coordinate (B1) at (1,2.5);
\coordinate (B0) at (1,3);
\coordinate (B2) at (1,2);
\coordinate (B3) at (1,1);
\coordinate (C) at (2.9,1);
\coordinate (C1) at (2,2.5);
\coordinate (C2) at (2,2);
\coordinate (C3) at (2,1.5);
\coordinate (C4) at (2,1);
\coordinate (D) at (3,.5);
\coordinate (Z) at (.1,2.5);
\coordinate (Z2) at (1.5,2);
\coordinate (X) at (1,.85);
\coordinate (Y) at (2,.85);
\draw (X) node[below]{$a$};
\draw (Y) node[below]{$b$};
\draw (Z2) node[below]{$H_+$};
\draw[fill] (B2) circle[radius=1mm];
\draw[fill] (C4) circle[radius=1mm];
\draw[thick,dashed] (B2)--(C2);
\draw[very thick] (A)--(B1)--(B2)--(B0)--(C1)--(C4)--(D);
\draw (Z) node{$\widetilde R$};
\end{scope}
\begin{scope}[xshift=10.5cm,xscale=-1]
\coordinate (A) at (0,3); 
\coordinate (B) at (1,2.5);
\coordinate (B1) at (1,2.5);
\coordinate (B0) at (1,3);
\coordinate (B2) at (1,2);
\coordinate (B3) at (1,1);
\coordinate (C) at (2.9,1);
\coordinate (C1) at (2,2.5);
\coordinate (C2) at (2,2);
\coordinate (C3) at (2,1.5);
\coordinate (C4) at (2,1);
\coordinate (D) at (3,.5);
\coordinate (Z) at (2.5,1);
\coordinate (X) at (1,.85);
\coordinate (Y) at (2,.85);
\draw (X) node[below]{$\mathfrak t(a)$};
\draw (Y) node[below]{$\mathfrak t(b)$};
\draw[fill,blue] (B2) circle[radius=1mm];
\draw[fill,blue] (C4) circle[radius=1mm];
\draw[dashed,black] (B2)--(C2);
\draw[very thick,blue] (A)--(B1)--(B2)--(B0)--(C1)--(C4)--(D);
\draw[blue] (Z) node[left]{$B'$};
\end{scope}
\end{tikzpicture}
\caption{This red function $R$ has a spike on the right end $b$ of an island $(a,b)$ and a discontinuity at the left end $a$. When the island is flipped, we get a downward spike at $a$ and a discontinuity at $b$. The function $R$ is the maximum and the tilted functions $\widetilde R$ and $B'$ are minimums on vertical lines.}
\label{Fig6: flipping spikes}
\end{center}
\end{figure}
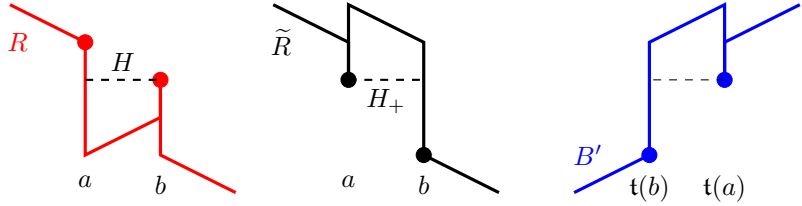

\begin{theorem}\label{thm:stability tilting}
    The new pair $(R',B')$ is a red-blue pair for the quiver $Q'$ and the $\sigma'$-semistable $Q'$ modules given by this pair are the continuous tilts of the $\sigma$-semistable $Q$-modules given by the original pair $(R,B)$.
\end{theorem}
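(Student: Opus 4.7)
The plan is to verify the theorem in two stages: first that $(R', B')$ is a red-blue function pair in the sense of Definition~\ref{def:red-blue function pair}, and second that the bijection $\phi$ of Lemma~\ref{lem: continuous tilting preserves compatibility} matches $\sigma$-semistable modules to $\sigma'$-semistable modules.

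For the first stage, on the region $[s, +\infty)$ all six axioms are inherited directly from $(R, B)$. On the modified region $K' = (-\infty, s_-] \subseteq \widehat Q'$, the function $R'$ is the constant $R_-(s)$, so axioms (1), (2), (5), (6) are trivial for $R'$. For $B'$ on $K'$, axiom (1) ($\minn{B'} = B'$) is precisely the content of Lemma~\ref{lem: B'=min B'}; axioms (2) and (6) are vacuous since $K^*$ contains no sources and no red intervals of $Q'$; axiom (5) is forced by $R'$ being constant. Axiom (3) on $K'$ reduces to $R_-(s) \leq \maxx{B'}$, which follows from the identity $\widetilde R = H + H_+ - R$ combined with Remark~\ref{rem: H>H+>R-(s)}. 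Axiom (4) at $-\infty$ asks for $R'(-\infty) = B'(-\infty)$, and both equal $R_-(s)$ via $\mathfrak{t}(-\infty) = s$ and Lemma~\ref{lem: H is nonincreasing}. A direct limit computation confirms the matching at the seam $s$, and Lemma~\ref{lem: B' has same variation as R} together with Proposition~\ref{prop: local variation at a is u+(a) + u-(a)} guarantees $B'$ is useful in the sense of Definition~\ref{def:useful}.

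For the second stage, I would split by the position of the admissible interval $I$ relative to $K$. When $I \subseteq [s, +\infty)$, $\phi(M_I) = M_I$ and the semistability transfers trivially. When $I$ crosses the seam, the horizontal line witnessing semistability extends across $s$; its validity on the tilted side follows from variation-preservation (Lemma~\ref{lem: B' has same variation as R}) and from the inequalities $\maxx{R} \leq h \leq \minn{B}$ translating to $\maxx{R'} \leq h \leq \minn{B'}$ on the interior of $I'$ via the construction of Definition~\ref{def:flipped functions}. The core case is $I \subseteq K$: such an $I$ is necessarily red--red of the form $(x, y]$, and by Definition~\ref{def:stable module} together with axiom (6) for $B$ on the red interval $K$, the $\sigma$-semistability of $M_I$ at a height $h$ is equivalent to $(x,y)$ being an island of $R$ with $h = R(y)$. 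The flip then carries this island to a ``valley'' of $B'$ on the blue--blue tilted interval $I' = [\mathfrak{t}(y), \mathfrak{t}(x))$, and Lemma~\ref{lem: B'=min B'} guarantees that the line at the same height $h$ touches $B'$ at its flipped endpoints. The constraint $\maxx{R'} \leq h$ is automatic since $R' \equiv R_-(s)$ and $h = R(y) \geq R_-(s)$ by the definition of island.

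The main obstacle I expect is this island case, where neither $R$ nor $\widetilde R$ need be continuous at the endpoints $x,y$, so correct touching of the respective modified graphs $\widehat{\mathcal{G}}(R,B)$ and $\widehat{\mathcal{G}}(R',B')$ requires careful bookkeeping of $\maxx{R}$, $\minn{B'}$ and one-sided limits. Lemma~\ref{lem: B'=min B'} together with the formula $\widetilde R = H + H_+ - R$ is the key tool for reconciling these one-sided behaviors and verifying Definition~\ref{def:stable module}(\ref{def:stable module:endpoints}). As a final bookkeeping check, the four-point condition for $\sigma'$ follows from the four-point condition for $\sigma$ via Lemma~\ref{lem: continuous tilting preserves compatibility} combined with the characterization in Theorem~\ref{thm:four point equivalent to cluster}.
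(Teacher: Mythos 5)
Your two-stage structure matches the paper's exactly: first verify the axioms of Definition~\ref{def:red-blue function pair}, then show semistability transfers, split by position of the interval relative to the tilted region $K$. The first stage is essentially correct, and you've correctly identified Lemma~\ref{lem: B'=min B'}, Lemma~\ref{lem: B' has same variation as R}, Lemma~\ref{lem: continuous at x iff local variation is 0}, and Remark~\ref{rem: H>H+>R-(s)} as the workhorses. The gap is in your Case ``$I\subseteq K$''.

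You assert that $\sigma$-semistability of $M_I$ for $I=(x,y]\subset K$ at height $h$ ``is equivalent to $(x,y)$ being an island of $R$ with $h=R(y)$.'' Neither half of this is correct as stated. The definition of an island requires \emph{strict} inequality $R(z)<R(y)$ for $x<z<y$, whereas Definition~\ref{def:stable module}(\ref{def:stable module:no crossing}) permits equality (and the four-point condition then allows exactly one interior touch point $z=c$, which the paper's proof handles explicitly). Nor is $h$ forced to equal $R(y)$: the endpoint condition only requires $\minn{R}(y)\le h\le R(y)$, and when $R$ is discontinuous at $y$ this is a nondegenerate interval. Consequently your subsequent claim --- that the tilted module $M_{I'}$ is $\sigma'$-semistable ``at the same height $h$'' --- is false in general. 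The paper's proof of Case (2) instead reflects the height through the constant $H(b)=H_+(a)$, setting $h'=2H(b)-h$, and the two heights coincide only in the special case $h=H(b)$. The reason the reflection works is the identity $\widetilde R = H+H_+-R$: a chord sitting \emph{below} $R$ at height $h$ inside an island where $H\equiv H_+\equiv R(b)$ becomes a chord sitting \emph{above} $\widetilde R$ at height $2R(b)-h$, and one must then check the endpoint inequalities $\widetilde R(a),\widetilde R(b)\le h'$, which the paper does by direct computation. Your argument, relying only on $h=R(y)$ and preservation of the same height, cannot cover the semistable modules detected at heights $h<R(y)$, and so the ``only if'' direction is genuinely incomplete.

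The seam case (your ``crossing'' case, the paper's Case (3)) is also underdeveloped. You cite variation-preservation, but Lemma~\ref{lem: B' has same variation as R} alone does not give the needed endpoint-touching conditions at the modified graph. The paper's argument shows that for $b\in K$ and $c\in Z$, $\widehat I(b,c)$ is $\sigma$-semistable at height $h$ if and only if $H_+(b)\le h\le H(b)=R(b)$, and then uses this explicit characterization to verify that the horizontal segment from $(\mathfrak t(b),h)$ stays below $B'$ and touches it at most once, using the four-point condition. The function $H$ (not just the variation of $R$) is essential here because the chord must avoid not just $R$ on $K$ but the flattened profile of $R$'s islands, and that profile is precisely $H$.
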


\begin{proof}
Lemmas \ref{lem: B' has same variation as R} implies that $R$ and $B'$ have the same local variation at the corresponding points $x$ and $\mathfrak t(x)$. In particular, $R$ and $B'$ have discontinuities at corresponding points by Lemma \ref{lem: continuous at x iff local variation is 0} and the $B'(a)=min\,B'(a)$ by Lemma \ref{lem: B'=min B'}. 

The new red function $R'$ is constantly equal to $R_-(s)$ on $K^\ast$ and equal to the old function $R$ on the complement $Z$. So, $B'(x)\ge R'(x)$ and they have the same limit as $x\to -\infty$ by Remark \ref{rem: H>H+>R-(s)}. Thus $(R',B')$ form a red-blue pair for $Q'$.

Let $\sigma,\sigma'$ be the stability conditions on $Q,Q'$ given by the red-blue pairs $(R,B)$ and $(R',B')$, resp. It remains to show that the admissible interval $I=\widehat I(a,b)$ is $\sigma$-semistable for $Q$ if and only if the corresponding interval $I'$ is $\sigma'$-semistable for $Q'$ where $I'=\widehat I'(\overline{\mathfrak t}(a),\overline{\mathfrak t}(b))$ if $\overline{\mathfrak t}(a)<\overline{\mathfrak t}(b)$ in $\widehat Q'$ and $I'=\widehat I'(\overline{\mathfrak t}(b),\overline{\mathfrak t}(a))$ if $\overline{\mathfrak t}(b)<\overline{\mathfrak t}(a)$.

Consider $a<b$ in $\overline \RR$. There are three cases.
\begin{enumerate}
\item $a=\overline{\mathfrak t}(a)$ and $b=\overline{\mathfrak t}(b)$ both lie in $Z$.
\item $-\infty \le a<b<s$ ($a,b\in K$) and $-\infty <\overline{\mathfrak t}( b)<\overline{\mathfrak t}( a)\le s$ ($\overline{\mathfrak t}( a),\overline{\mathfrak t}( b)\in K^\ast$).
\item $a\in K$, $\overline{\mathfrak t}(a)\in K^\ast$ and $b=\overline{\mathfrak t}(b)\in Z$.
\end{enumerate}

In Case (1), the stability conditions $\sigma,\sigma'$ given by the red and blue functions are the same on $Z$. So, $\widehat I(a,b)$ is $\sigma$-semistable if and only if $\widehat{I'}(a,b)=\widehat{I'}(\overline{\mathfrak t}(a),\overline{\mathfrak t}(b))$ is $\sigma'$-semistable for $Q'$.

In Case (2), we claim that $\widehat I(a,b)$ at height $h$ is $\sigma$-semistable if and only if $\widehat I'(\overline{\mathfrak t}(b),\overline{\mathfrak t}(a))$ is $\sigma'$-semistable at height $h'$ where $h'=2H(b)-h$. 

An example can be visualized in Figure \ref{Fig6: flipping spikes} by drawing horizontal lines at height $h<H$ and $h'>H_+$ under the line $H$ on the left and over $H_+$ on the right.

To see this in general, note that if $\widehat I(a,b)$ at height $h$ is $\sigma$-semistable then, for all $z\in (a,b)$, $R(z)\le h$ (with equality holding for at most one value of $z$, call it $z=c$) and $R(a),R(b)\ge h$. Then for each $z\in [a,b]$, $H(z)\ge h$. So, for each $z\in (a,b)$, $z\neq c$, we have $H(z)>R(z)$. By Remark \ref{rem: when z is in an island}, $z$ lies in the interior of an island for $R$. But $\widetilde R(z)-H_+(z)=H(z)-R(z)>0$. So, the same values of $z$ lie in islands for $\widetilde R$ and $\widetilde R(z)-h'=h-R(z)\ge0$. Also, $\widetilde R(a),\widetilde R(b)\le h'$ since:
\[\begin{split}
	h'-\widetilde R(b)&=2H(b)-h-H(b)-H_+(b)+R(b)\\
	&= (H(b)-H_+(b))+(R(b)-h)\ge0
\end{split}
\]
and, since $H_+(a)=H(b)$ and $H(a)=$ either $H(b)$ or $R(a)$,
\[\begin{split}
	h'-\widetilde R(a)&=2H(b)-h-H(a)-H_+(a)+R(a)\\
	&= R(a)-h+H(b)-H(a)\\
	&\text{either } =R(a)-h \ge0\\
	& \text{or } =H(b)-h\ge0
\end{split}
\]
Therefore, $[a,b]\times h'$ is a chord for $\widetilde R$, making its mirror image $[\overline{\mathfrak t}(b), \overline{\mathfrak t}(a)]\times h'$ a chord for $B'$ and thus $\widehat I'(\overline{\mathfrak t}(b), \overline{\mathfrak t}(a))$ is $\sigma'$-semistable for $Q'$ at height $h'$. An analogous argument shows the converse. So, $\widehat I(a,b)$ at height $h$ is $\sigma$-semistable for $Q$ if and only if $\widehat I'(\overline{\mathfrak t}(b),\overline{\mathfrak t}(a))$ is $\sigma'$-semistable at height $h'$ for $Q'$.

In Case (3), we change notation to match Figure \ref{Fig6: flipping spikes}. Suppose we have $b\in K$, $\overline{\mathfrak t}(b)\in K^\ast$ and $c=\overline{\mathfrak t}(c)\in Z$. We claim that $\widehat I(b,c)$ is $\sigma$-semistable at height $h$ if and only if $\widehat I'(\overline{\mathfrak t}(b),\overline{\mathfrak t}(c))$ is $\sigma'$-semistable at the same height $h$.

In Figure \ref{Fig6: flipping spikes}, the chord $[b,c]\times h$ would be a horizontal line starting at any point on the vertical red line at $b$ and going to the right. For $\widetilde R$, we have $H(b)\ge h\ge H_+(b)$, so a horizontal line at height $h$ starting anywhere on the vertical segment $b\times [H_+(b),H(b)]$ could go left without hitting the function $\widetilde R$ except at height $h=H_+(a)=H(b)$ where it would touch the function at $(a,H_+(a))$ then continue. For $B'$, the horizontal line starting at $(\overline{\mathfrak t}(b),h)$ would go right, possibly touch the curve at $\overline{\mathfrak t}(a)$ and continue to the point $(c,h)$.

The situation in general is very similar. $\widehat I(b,c)$ is $\sigma$-semistable at height $h$ for some $c\in Z$ if and only if $H_+(b)\le h\le H(b)=R(b)$. Since $H_+(b)$ is the supremum of $R(x)$ for all $b<x<s$, this is equivalent to saying the horizontal line at $h$ does not touch the curve $R$ except possibly at one point (not more by the four point condition). If $h=H(b)$, this horizontal line might continue to the left of $(b,h)$ an hit at most one point $(a,h)$ on the curve $R$. 

If $h<H(b)$ then the horizontal line at $(b,h)$ on $\widetilde R$, would go to the left and not hit anything since, for all $x<b$, we have $\widetilde R(x)\ge H_+(x)\ge H(b)>h$. So, the line from $(\overline{\mathfrak t}(b),h)$ to $(\overline{\mathfrak t}(c),h)$ would not hit $B'$. 

If $h=H(b)$, then, for all $x<b$, $\widetilde R(x)\ge H_+(x)\ge H(b)=h$. So, the line going left from $(b,h)=(b,H(b))$ would stay under $\widetilde R$ possibly touching it at most once, say at $(a,h)$. Then $(a,b)$ would be an island and we have the situation in Figure \ref{Fig6: flipping spikes}. By the four point condition we cannot have another point $a'$ with the same property since $(a,h),(b,h),(c,h)$ are already on a line. The horizontal line going right from $(\overline{\mathfrak t}(b),h)$ would touch the curve $B'$ at $(\overline{\mathfrak t}(a),h)$ and continue to $(\overline{\mathfrak t}(c),h)$.

So, $\widehat I(b,c)$ being $\sigma$-semistable at height $h$ implies that $\widehat I'(\overline{\mathfrak t}(b),\overline{\mathfrak t}(c))$ is $\sigma'$-semistable at the same height $h$. The converse is similar since going from $B'$ to $R$ is analogous (change $B'$ to $-B'$ and make it red). This concludes the proof in all cases.
\end{proof}

\section{Measured Laminations and Stability Conditions}\label{sec:measured laminations}
In this section we connect measured laminations of the hyperbolic plane to stability conditions for continuous quivers of type $\mathbb{A}$.
We first define measured laminations (Definition~\ref{def:measured lamination}) of the hyperbolic plane and prove some basic results we need in Section~\ref{sec:measured laminations:measured laminations}.
In Section~\ref{sec:the correspondence} we describe the correspondence that connects stability conditions to measured laminations.
In Section~\ref{sec:continuous cluster character} we present a candidate for continuous cluster characters.
In Section~\ref{sec:all maximal compatible sets come from a stability condition} we briefly describe how all maximally $\Npi$-compatible sets come from a stability condition.
In Section~\ref{sec:finite map using continuous tilting} we describe maps between cluster categories of type $\mathbb{A}_n$ that factor through our continuous tilting.
We also give an example for type $\mathbb{A}_4$.

\subsection{Measured Laminations}\label{sec:measured laminations:measured laminations}
We denote by $\hyper$ the Poincar\'e disk model of the hyperbolic plane and by $\partial\hyper$ the boundary of the disk such that $\partial\hyper$ is the unit circle in $\mathbb{C}$.
Recall a \textdef{lamination} of $\hyper$ is a maximal set of noncrossing geodesics and that a geodesic in $\hyper$ is uniquely determined by a distinct pair of points on $\partial\hyper$.

Let $L$ be a lamination of $\hyper$.
Choose two open interval subsets $A$ and $B$ of $\partial\hyper$, each of which may be all of $\partial\hyper$ or empty.
Let $O_{A,B}$ be the set of geodesics with one endpoint in $A$ and the other in $B$.
We call $O_{A,B}$ a \textdef{basic open subset} of $L$.
Notice that $O_{A,B}=O_{B,A}$.
The basic open sets define a topology on $L$.

\begin{definition}\label{def:measured lamination}
    Let $L$ be a lamination of $\hyper$ and $\Meas:L\to \RR_{\geq 0}$ a measure on $L$.
    We say $(L,\Meas)$ is a \textdef{measured lamination} if $0 < \Meas(O_{A,B}) < \infty$ for every $O_{A,B}\neq \emptyset$.
\end{definition}

Notice that we immediately see any measured lamination $(L,\Meas)$ has finite measure.
That is, $0<\Meas(L)<\infty$.

We now define some useful pieces of laminations.

\begin{definition}\label{def:discrete, fountain, and rainbow}
    Let $L$ be a lamination of $\hyper$.
    \begin{enumerate}
        \item\label{def:discrete, fountain, and rainbow:discrete}
        Let $\gamma\in L$ be a geodesic determined $a,b\in\partial\hyper$.
        We say $\gamma$ is a \textdef{discrete arc} if there exists non-intersecting open subsets $A\ni a$ and $B\ni b$ of $\partial\hyper$ such that $O_{A,B}=\{\gamma\}$.
        
        \item\label{def:discrete, fountain, and rainbow:fountain}
        Let $a\in\partial\hyper$.
        Let $A$ be some interval subset of $\partial\hyper$ with more than one element such that for every geodesic $\gamma\in L$ determined by some $a'\in A$ and $b\in\partial\hyper$, we have $b=a$.
        Then we define the set $K$ of geodesics determined by the pair $a,A$ to be called a \textdef{fountain}.
        We say $K$ is \textdef{maximal} if a fountain determined by $a,A'$, where $A'\supseteq A$, is precisely $K$.
        
        \item\label{def:discrete, fountain, and rainbow:rainbow}
        Let $A,B$ be interval subsets of $\partial\hyper$ whose intersection contains at most one point.
        Suppose that for every geodeisc $\gamma\in L$ determined by $a,b\in\partial\hyper$, we have $a\in A\setminus\partial A$ if and only if $b\in B\setminus\partial B$.
        If there is more than one such geodesic, we call the set $K$ of all such geodesics determined by $a,b$ with $a\in A$ and $b\in B$ a \textdef{rainbow}.
        We say $K$ is \textdef{maximal} if a rainbow determined by $A'\supseteq A$ and $B'\supseteq B$ is precisely $K$.
    \end{enumerate}
\end{definition}

From the definitions we have a result about discrete arcs, fountains, and rainbows.

\begin{proposition}\label{prop:discrete, fountain, rainbow are positive}
    Let $L$ be a lamination of $\hyper$ and let $K$ be a discrete geodesic, a fountain, or a rainbow.
    Then $\Meas(K)>0$.
\end{proposition}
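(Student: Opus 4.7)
The plan is, in each case, to produce a non-empty basic open subset $O_{A', B'} \subseteq K$; then Definition~\ref{def:measured lamination} forces $\Meas(O_{A', B'}) > 0$, and monotonicity of $\Meas$ gives $\Meas(K) > 0$. For a discrete arc $\gamma$, Definition~\ref{def:discrete, fountain, and rainbow}(\ref{def:discrete, fountain, and rainbow:discrete}) directly provides non-intersecting open intervals $A, B$ with $O_{A,B} = \{\gamma\} = K$, so $K$ itself is the desired basic open set.

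For a fountain $K$ with apex $a$ and spread $A$, the plan is to fix a geodesic $\gamma \in K$ whose non-apex endpoint $c$ lies in $A \setminus \partial A$ (non-empty since $A$ is an interval with more than one element), then take an open interval $A' \subseteq A \setminus \partial A$ containing $c$ and an open interval $B' \ni a$ disjoint from $A'$. The fountain hypothesis then forces every geodesic of $L$ with an endpoint in $A' \subseteq A$ to have its other endpoint at $a \in B'$, so $O_{A', B'} \subseteq K$ and $\gamma \in O_{A', B'}$.

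For a rainbow $K$ on intervals $(A, B)$, the strategy mirrors the fountain case: I would exhibit a geodesic $\gamma_0 \in K$ with endpoints $a_0 \in A \setminus \partial A$ and $b_0 \in B \setminus \partial B$, and then shrink to open intervals $A' \subseteq A \setminus \partial A$ around $a_0$ and $B' \subseteq B \setminus \partial B$ around $b_0$. The rainbow iff condition then forces any geodesic of $L$ with endpoints in $A' \times B'$ to lie in $K$, giving $O_{A', B'} \subseteq K$ with $\gamma_0 \in O_{A', B'}$. To produce $\gamma_0$: the rainbow condition makes the geodesics of $K$ touching $\partial A$ or $\partial B$ into a finite subset (at most four, since $A$ and $B$ are intervals), and because $|K| \geq 2$ and the geodesics of $K$ are pairwise non-crossing, the nesting structure of $K$ yields at least one geodesic nested strictly inside the outermost ``boundary'' geodesics, whose endpoints must then lie in the interiors of $A$ and $B$.

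The main obstacle, for both the fountain and rainbow cases, is the degenerate situation in which every geodesic of $K$ lands on the finite set $\partial A$ (or $\partial A \cup \partial B$), leaving no room to shrink. I expect to rule out these situations by showing they are incompatible with the implicit conventions of the definitions (so that a genuine fountain or rainbow contains arcs hitting the interior), or, failing that, to argue that each exceptional boundary geodesic is in fact a discrete arc in $L$ and so contributes positive measure to $K$ via the first case.
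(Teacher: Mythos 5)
Your strategy — exhibit a non-empty basic open subset $O_{A',B'} \subseteq K$ and invoke Definition~\ref{def:measured lamination} — is exactly the paper's strategy, and the discrete-arc case is handled identically. Where you diverge is in unnecessary extra machinery. For the rainbow case, the paper simply takes $A' = A\setminus\partial A$ and $B' = B\setminus\partial B$ and is done; there is no need to locate a specific geodesic $\gamma_0$ and shrink around it, nor to reason about the nesting structure of $K$ or count the finitely many arcs hitting $\partial A\cup\partial B$. For the fountain case the paper likewise avoids picking a specific geodesic and instead uses $A\setminus\partial A$ (or $A\setminus(\{a\}\cup\partial A)$ when $a\in A$) paired with a small ball $B'$ around the apex $a$; the only case split is on whether $a\in A$. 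Your instinct to pin down a particular geodesic with an interior endpoint makes the argument longer and forces you to confront (in your last paragraph) the ``degenerate'' possibility that all arcs of $K$ land on $\partial A$ — a worry the paper sidesteps by phrasing things in terms of the interval rather than a chosen arc, although to be fair the paper's $\Meas(O_{A\setminus\partial A,B'})>0$ step tacitly needs $O_{A\setminus\partial A,B'}\neq\emptyset$ too, which amounts to the same unstated fact (that a fountain with $|K|\ge 2$ must have arcs into the interior of $A$, which follows from maximality of $L$). So: same route, but you could drop the geodesic-hunting and shrinking entirely, take the full open interiors as the paper does, and your proof becomes both shorter and closer to complete.
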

\begin{proof}
    By definition, if $K=\{\gamma\}$ is a discrete arc then $K=O_{A,B}$ and so $\Meas(K)>0$.
    Additionally, if $K=L$ then $K=O_{\partial\hyper,\partial\hyper}$ and so $\Meas(K)>0$.
    So we will assume $K$ is either a fountain or a rainbow and $K\neq L$; in particular $K$ has more than one element.

    First suppose $K$ is a fountain determined by $a\in\partial\hyper$ and $A\subset\partial\hyper$.
    By definition $K$ has more than one element and so $A\setminus\partial A\neq\emptyset$.
    If $a\notin A$ then let $B\ni a$ be a small open ball around $a$ in $\partial\hyper$ such that $B\cap A=\emptyset$.
    Now consider $O_{A\setminus\partial A,B}$.
    We see $O_{A\setminus\partial A,B}\subset K$ and $\Meas(O_{A\setminus\partial A,B})>0$.
    If $a\in A$ then every geodesic determined by an $a'$ and $b$ with $a'\in A\setminus(\{a\}\cup\partial A)$ has $b=a$.
    Let $A'=A\setminus(\{a\}\cup\partial A)$ and let $B\ni a$ be an open ball such that $A\setminus\partial A\not\subset B$.
    Now we have $O_{A',B}\subset K$ and $\Meas(O_{A',B})>0$.
    Therefore $\Meas(K)>0$.

    Now suppose $K$ is a rainbow determined by $A$ and $B$.
    Again we know $K$ has more than one element so both $A\setminus\partial A$ and $B\setminus\partial B$ are nonempty.
    Take $A'=A\setminus \partial A$ and $B'=B\setminus\partial B$.
    Then $O_{A',B'}\subset K$ and $\Meas(O_{A',B'})>0$.
    Therefore, $\Meas(K)>0$.
\end{proof}

\subsection{The Correspondence}\label{sec:the correspondence}
In this section, we recall the connection between $\Npi$-clusters and (unmeasured) laminations of $\hyper$ for the straight descending orientation of a continuous quiver of type $\mathbb{A}$, from \cite{IT15}.
We then extend this connection to measured laminations and stability conditions stability conditions that satisfy the four point condition, obtaining a ``2-bijection'' (Theorem~\ref{thm:measured laminations and straight four point conditions}).
Then we further extend this ``2-bijection'' between measured laminations and stability conditions to all continuous quivers of type $\mathbb{A}$ with finitely many sinks and sources (Corollary~\ref{cor:measured laminations and straight four point conditions for all orientations}).
We conclude that section with an explicit statement that tilting a stability condition $\sigma\in\mathcal{S}_\fpc(Q)$ to a stability condition $\sigma'\in\mathcal{S}_\fpc(Q')$ yields the \emph{same} measured lamination, for continuous quivers $Q,Q'$ of type $\mathbb{A}$ (Theorem~\ref{thm:explicit statement about stability conditions, tilting, and measured laminations}).

\begin{theorem}[from \cite{IT15}]\label{thm:igusa todorov}
    There is a bijection $\Phi$ from maximally $\Npi$-compatible sets to laminations of $\hyper$.
    For each maximally $\Npi$-compatible set $T$ and corresponding lamination $\Phi(T)$, there is a bijection $\phi_T:T\to \Phi(T)$ that takes objects in $T$ to geodesics in $\Phi(T)$.
\end{theorem}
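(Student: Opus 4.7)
The plan is to construct the bijection explicitly from the endpoints of admissible intervals and match it to the endpoints of geodesics in $\hyper$. Since this is a recollection of a result of Igusa--Todorov, I will sketch the architecture rather than redo their construction from the ground up.

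First, I would pin down admissible intervals in the straight descending case. Under that orientation there is a single sink at $-\infty$ and a single source at $+\infty$, so every element of $\widehat Q$ that can serve as an endpoint is red. Consequently, admissible intervals are exactly half-open intervals $I=(a,b]$ with $a\in[-\infty,+\infty)$ and $b\in(-\infty,+\infty]$, and $\Indr(Q)$ is in bijection with these by Proposition~\ref{prop:modified interval bijection}. Next, I would fix a homeomorphism $\varphi:\overline{\RR}\to \partial\hyper$, for instance via $\varphi(t)=e^{2i\arctan t}$ (with the conventions $\varphi(\pm\infty)=\mp 1$), and define $\phi(M_{(a,b]})$ to be the unique geodesic in $\hyper$ whose boundary endpoints are $\varphi(a)$ and $\varphi(b)$. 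Because a geodesic in $\hyper$ is uniquely determined by an unordered pair of distinct points on $\partial\hyper$ and $\varphi$ is a bijection, $\phi$ is a bijection between $\Indr(Q)$ and the set of all geodesics in $\hyper$.

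The heart of the matter is showing that $\Npi$-compatibility of $M_{(a,b]}$ and $M_{(c,d]}$ corresponds precisely to the non-crossing of the geodesics $\phi(M_{(a,b]})$ and $\phi(M_{(c,d]})$. I would apply Lemma~\ref{lem:Npi compatiblity as colors}: in the straight descending case all endpoints are red, so conditions (3) and (4) of that lemma collapse, and pairwise $\Npi$-compatibility reduces to the intervals being disjoint, nested, or sharing an endpoint on $\overline{\RR}$. Translating through $\varphi$, this is exactly the condition that the two pairs $\{a,b\}$ and $\{c,d\}$ do not interlock on $\partial\hyper$, which is the classical characterization of non-crossing geodesics in $\hyper$. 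Consequently a set $T\subseteq \Indr(Q)$ is pairwise $\Npi$-compatible iff $\phi(T)$ is a set of pairwise non-crossing geodesics, and maximality on one side transports to maximality on the other, so $\Phi(T):=\phi(T)$ is a bijection from maximally $\Npi$-compatible sets onto laminations of $\hyper$. The required bijection $\phi_T$ is then just the restriction $\phi|_T$.

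The only delicate point is the endpoint bookkeeping: ensuring that allowing $a=-\infty$ or $b=+\infty$ matches the convention that a geodesic may have an endpoint at $\varphi(\pm\infty)\in\partial\hyper$, and that the distinctness condition $a<b$ corresponds to the fact that a geodesic is determined by two \emph{distinct} boundary points. Once $\varphi$ is fixed coherently with $\pm\infty$, this is immediate. The substantive content is already contained in Lemma~\ref{lem:Npi compatiblity as colors}, so modulo the citation to \cite{IT15} for the geometric picture, the theorem follows.
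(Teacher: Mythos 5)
This theorem is cited in the paper as a recalled result from \cite{IT15}; the paper itself provides no proof, so there is no internal proof to compare against. Your reconstruction of the Igusa--Todorov argument is structurally sound and captures the main idea: identify admissible intervals with (unordered) pairs of distinct boundary points, check that $\Npi$-compatibility is precisely non-crossing of the corresponding geodesics, and transport maximality across that correspondence.

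Two small inaccuracies are worth flagging, though neither is fatal. First, in the straight descending case the source is $+\infty$, and representable modules have support bounded above (they are quotients of finite sums of $P_{a_i}$, each supported on $(-\infty,a_i]$); consequently admissible intervals have the form $(a,b]$ with $a\in[-\infty,+\infty)$ but $b\in\RR$ only. This is why the paper indexes $\partial\hyper$ by $\RR\cup\{-\infty\}$ (the one-point compactification), not by $\overline\RR$. Your formula $\varphi(t)=e^{2i\arctan t}$ in fact sends both $\pm\infty$ to the single point $-1$, so the claimed values $\varphi(\pm\infty)=\mp1$ are mis\-stated; the coherent picture is that there is a single boundary point ``at infinity'' and $b$ never reaches it. Second, the claim that conditions (3) and (4) of Lemma~\ref{lem:Npi compatiblity as colors} ``collapse'' should be sharpened: since every endpoint is red in the straight descending orientation, (3) is satisfied whenever $I\subset J$ (so nesting always gives compatibility, subsuming (2)), while (4) is never satisfied (the endpoints of $I\cap J$ are always both red). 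The resulting characterization is simply ``disjoint or nested,'' which is exactly the non-interlocking condition for the boundary pairs. With those corrections your sketch is a faithful rendering of the argument in \cite{IT15}.
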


Before we proceed we introduce some notation to make some remaining definitions and proofs in this section more readable.
First, we fix an indexing on $\partial\hyper$ in the following way.
To each point $x\in\RR\cup\{-\infty\}$ we assign the point $e^{i\arctan(x)}$ in $\partial\hyper$.
We now refere to points in $\partial\hyper$ as points in $\RR\cup\{-\infty\}$.
\begin{notation}
    Let $(L,\Meas)$ be a measured lamination of $\hyper$.
    \begin{itemize}
        \item For each $\gamma\in L$ we denote by $a_\gamma$ and $b_\gamma$ the unique points in $\partial\hyper$ that determine $\gamma$ such that $a_\gamma<b_\gamma$ in $\RR\cup\{-\infty\}$.
        \item For each $x\in\partial\hyper$ such that $x\neq-\infty$,
        \begin{align*}
            \frac{L}{x} :=& \{\gamma\in L \mid \gamma_a < x < \gamma_b\} \\
            L \cdot x :=& \{\gamma\in L \mid \gamma_b = x\} \\
            x \cdot L :=& \{\gamma\in L \mid \gamma_a = x\}.
        \end{align*}
        \item For $-\infty$,
        \begin{align*}
            \frac{L}{-\infty} :=& \emptyset \\
            L \cdot (-\infty) :=& \emptyset \\
            (-\infty) \cdot L :=& \{\gamma\in L\mid \gamma_a = -\infty\}.
        \end{align*}
        \item Finally, for some interval $I\subset \RR$,
        \begin{align*}
            I\cdot L := & \bigcup_{x\in I} x\cdot L = \{\gamma\in L\mid \gamma_b\in I\} \\
            L\cdot I := & \bigcup_{x\in I} L\cdot x = \{\gamma\in L \mid \gamma_a\in I\}.
        \end{align*}
    \end{itemize}

    We denote by $\mathcal{L}$ the set of measured laminations of $\hyper$ and by $\overline{\mathcal{L}}$ the set of laminations of $\hyper$ (without a measure).
\end{notation}

Now we define how to obtain a useful function $F$ from any measured lamination $L\in\mathcal{L}$.
We will use this to define a function $\mathcal{L}\to \mathcal{S}_\fpc(Q)$, where $Q$ is the continuous quiver of type $\mathbb{A}$ with straight descending orientation.

\begin{definition}\label{def:measured laminations to useful functions}
    Let $(L,\Meas)\in\mathcal{L}$.
    We will define a useful function $F$ on $-\infty$, $+\infty$, and then all of $\RR$.
    For $-\infty$, define
    \begin{align*}
        u_{-\infty}^- :=& 0 & u_{-\infty}^+ :=& -\Meas( (-\infty) \cdot L ) \\
        F(-\infty) :=& 0 & f(-\infty) :=& 0.
    \end{align*}
    For $+\infty$, define
    \begin{displaymath}
        u_{+\infty}^- = u_{+\infty}^+ = F(+\infty) = f(+\infty) = 0.
    \end{displaymath}
    For each $a\in\RR$, define
    \begin{align*}
        u_a^- := & \Meas( L \cdot a ) & u_a^+ := & -\Meas( a \cdot L ) \\
        F(a) := & -\Meas\left(\frac{L}{a}\right) & f(a) := & F(a) - \left(\sum_{x\leq a} u_x^-\right) - \left(\sum_{x<a} u_x^+\right).
    \end{align*}
\end{definition}

First, note that since $\Meas(L)<\infty$, each of the assignments is well-defined.
It remains to show that $F$ is a useful function.

\begin{proposition}\label{prop:measured laminations to useful functions}
    Let $(L,\Meas)\in\mathcal{L}$ and let $F$ be as in Definition~\ref{def:measured laminations to useful functions}.
    Then $F$ is useful.
\end{proposition}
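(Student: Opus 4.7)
The plan is to verify the two clauses of Definition~\ref{def:useful} in turn. The summability condition is immediate: the sets $\{L \cdot a\}_{a \in \RR}$ are pairwise disjoint subsets of $L$, so
\[
    \sum_{a \in \RR} |u_a^-| = \sum_{a \in \RR} \Meas(L \cdot a) \leq \Meas(L) < \infty,
\]
and likewise $\sum_{a \in \RR \cup \{-\infty\}} |u_a^+| \leq \Meas(L) < \infty$. In particular only countably many of the weights are nonzero, so the step-function series in Definition~\ref{def:measured laminations to useful functions} are well defined.

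For the decomposition $F = f + \sum u_x^- \Delta_x^- + \sum u_x^+ \Delta_x^+$ with $f$ continuous and of bounded variation, the key step is computing the one-sided limits of $F$ at each $a \in \RR$. Writing
\[
    F(a+\varepsilon) - F(a) = \Meas(L/a \setminus L/(a+\varepsilon)) - \Meas(L/(a+\varepsilon) \setminus L/a),
\]
I expand the two set differences as
\begin{align*}
    L/a \setminus L/(a+\varepsilon) &= \{\gamma : a_\gamma < a,\; a < b_\gamma \leq a+\varepsilon\}, \\
    L/(a+\varepsilon) \setminus L/a &= \{\gamma : a \leq a_\gamma < a+\varepsilon,\; b_\gamma > a+\varepsilon\},
\end{align*}
and check pointwise that, as $\varepsilon \to 0^+$, the indicator of the first set vanishes while the indicator of the second converges to that of $a \cdot L$. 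Dominated convergence against the finite measure $\Meas$ then gives $\lim_{\varepsilon \to 0^+} F(a+\varepsilon) = F(a) + u_a^+$; the analogous calculation yields $\lim_{\varepsilon \to 0^+} F(a-\varepsilon) = F(a) - u_a^-$. The endpoint values at $\pm\infty$ are handled directly from the conventions in Definition~\ref{def:measured laminations to useful functions}.

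Continuity of $f$ then reduces to matching those jumps against the two step-function series: $\sum_x u_x^- \Delta_x^-$ has a left-jump of exactly $u_a^-$ at $a$ and is right-continuous there (the tail $\sum_{x \in (a, a+\varepsilon]} |u_x^-|$ tends to $0$ as $\varepsilon \to 0^+$ by summability), with the symmetric statement for $\sum u_x^+ \Delta_x^+$ on the right. For bounded variation I would prove $\var_F(\RR) \leq 2\Meas(L)$: given any partition $x_0 < x_1 < \cdots < x_n$, each geodesic $\gamma$ contributes to $\sum_i |F(x_i) - F(x_{i-1})|$ only across the at-most-two partition intervals containing $a_\gamma$ or $b_\gamma$, each such contribution bounded by its $\Meas$-weight; summing via Fubini gives the global bound. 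Since the step-function part has total variation $\sum |u_x^-| + \sum |u_x^+| < \infty$, the difference $f$ is of bounded variation. The main nuisance throughout is the careful book-keeping of strict versus weak inequalities at the points where $u_a^\pm \neq 0$.
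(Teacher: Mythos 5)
Your proposal is correct and follows the same overall skeleton as the paper's proof — verify summability from $\Meas(L) < \infty$, show $f$ is continuous, show $f$ has bounded variation — but the two middle steps are executed somewhat differently. For continuity, the paper computes $\lim_{x\to a^\pm} f(x)$ directly by expanding the definition of $f$ and tracking how the sums shift; you instead compute the one-sided jumps of $F$ itself via the set decompositions $L/a \setminus L/(a+\varepsilon)$ and $L/(a+\varepsilon)\setminus L/a$, apply dominated convergence, and then observe that the step-function series exactly absorb those jumps — the same content, but your organization isolates the measure-theoretic limit cleanly from the algebraic cancellation. For bounded variation, the paper asserts a formula for $\var_f([a,b))$ in terms of $\Meas$ (stated as ``We see that\ldots'' with little justification) and then sums over the unit intervals $[i,i+1)$; your route is to bound $\var_F(\RR)$ over an arbitrary partition by the Fubini-style observation that each geodesic $\gamma$ contributes $\Meas$-weight to at most two of the telescoping differences (the intervals containing $a_\gamma$ and $b_\gamma$), giving $\var_F(\RR)\le 2\Meas(L)$, and then subtract off the step part whose variation is $\sum|u_x^-|+\sum|u_x^+|<\infty$. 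This is arguably the more transparent argument: it avoids committing to a precise formula for $\var_f$ and only uses the one-sided bound that is actually needed. Both proofs hinge on the same two facts, finiteness of $\Meas(L)$ and disjointness of the families $\{L\cdot x\}$, $\{x\cdot L\}$, so I would classify your version as the same proof, tidied.
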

\begin{proof}
    Since $\Meas(L)<\infty$, we see $\sum_{x\in\RR\cup\{+\infty\}} |u_x^-| + \sum_{x\in\RR\cup\{-\infty\}} |u_x^+| <\infty$.
    Now we show $f$ is continuous.
    Consider $\lim_{x\to a^-}f(x)$ for any $a\in\RR$:
    \begin{align*}
        \lim_{x\to a^-}f(x)
        &= \lim_{x\to a^-} \left[ F(x) - \left(\sum_{y\leq x} u_y^-\right) - \left(\sum_{y<x} u_y^+\right) \right] \\
        &= \lim_{x\to a^-} \left[ -\Meas\left(\frac{L}{x}\right) -\left(\sum_{y\leq x} \Meas(L\cdot y)\right) - \left( \sum_{y< x} \Meas(y\cdot L) \right) \right] \\
        &= -\Meas\left(\frac{L}{a}\right) - \Meas(L\cdot a) - \left(\sum_{x<a} \Meas(L\cdot a)\right) -\left(\sum_{x<a} \Meas(a\cdot L) \right) \\
        &= F(a) - \left(\sum_{x\leq a} u_x^-\right) - \left(\sum_{x <a} u_x^+\right) \\
        &= f(a).
    \end{align*}
    A similar computation shows $\lim_{x\to a^+}f(x) = f(a)$.
    Therefore, $f$ is continuous on $\RR$.
    We also note that $\lim_{x\to\pm\infty} f(x) = 0$, using similar computations.

    It remains to show that $f$ has bounded variation.
    Let $a<b\in \RR$ and let $F_0=f$.
    Denote by $\var_f([a,b))$ the variance of $f$ over $[a,b)$.
    We see that
    \[ \var_f([a,b)) = \Meas(([a,b)\cdot L)\cup(L\cdot[a,b))) - \sum_{x\in[a,b)} (\Meas(x\cdot L)+\Meas(L\cdot x). \]
    That is, $\var_f([a,b)$ is the measure the geodesics with endpoints in $[a,b)$ that are not discrete and do not belong to a fountain.
    So, \[ \var_f([a,b)) \leq \Meas(([a,b)\cdot L)\cup(L\cdot [a,b))).\]
    Then we have
    \[ \var_f(\RR) = \sum_{i\in\ZZ} \var_f([i,i+1)) \leq \sum_{i\in\ZZ} \Meas(([i,i+1)\cdot L)\cup(L\cdot [i,i+1))) < \infty. \]
    Thus, $f$ has bounded variation.
\end{proof}

We state the following lemma without proof, since the proof follows directly from Definition~\ref{def:red-blue function pair}~and~\ref{def:measured laminations to useful functions} and Proposition~\ref{prop:measured laminations to useful functions}.
\begin{lemma}
    Let $(L,\Meas)\in\mathcal{L}$ and let $F$ be as in Definition~\ref{def:measured laminations to useful functions}.
    Then $(F,0)$ is a red-blue function pair for the continuous quiver $Q$ of type $\mathbb{A}$ with straight descending orientation.
\end{lemma}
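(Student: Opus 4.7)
The plan is to verify the six axioms of Definition~\ref{def:red-blue function pair} for the pair $(R,B):=(F,0)$, using the straight descending orientation (so $\preceq$ is the usual order on $\RR$). In this orientation, $-\infty$ is the unique sink in $\Rbar$ and $+\infty$ is the unique source; in particular, the whole real line $\RR$ is a single red interval, and no point of $\RR$ is a sink or a source. This observation alone already disposes of axioms (\ref{def:red-blue function pair:no variation at sources}) and (\ref{def:red-blue function pair:R constant on blue}) vacuously, and reduces axiom (\ref{def:red-blue function pair:B constant on red}) to checking that $B\equiv 0$ is constant on $\RR$, which is obvious.

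For axiom (\ref{def:red-blue function pair:make u and v easy}), I would read off the signs from Definition~\ref{def:measured laminations to useful functions}: $u_a^- = \Meas(L\cdot a) \ge 0$ and $u_a^+ = -\Meas(a\cdot L) \le 0$ for every $a\in\RR$. Combining this with Proposition~\ref{prop:useful functions are useful}(\ref{prop:useful functions are useful:limit equations}), we obtain $\lim_{x\to a^-} F(x) = F(a)-u_a^- \le F(a)$ and $\lim_{x\to a^+} F(x) = F(a)+u_a^+ \le F(a)$, so $\maxx{F}(a) = F(a)$; the condition $\minn{B}(a) = B(a)$ is automatic. For axiom (\ref{def:red-blue function pair:graphs don't cross}), notice that $F(a) = -\Meas(L/a) \le 0$ for all $a\in\RR$, and the one-sided limits of $F$ are also differences of (nonnegative) measures of sets of geodesics passing strictly over $a$, hence also $\le 0$. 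Since $B\equiv 0$, both inequalities $F(x)\le \maxx{B}(x)$ and $\minn{F}(x)\le B(x)$ reduce to ``$F\le 0$ pointwise and in one-sided limits'', which is immediate.

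Axiom (\ref{def:red-blue function pair:equal at infinities}) is handled by the explicit values in Definition~\ref{def:measured laminations to useful functions}: $F(-\infty) = 0 = B(-\infty)$ by definition, and $F(+\infty) = 0 = B(+\infty)$ because $\lim_{x\to +\infty^-} F(x) = -\Meas(\RR\cdot L) + $ (discrete corrections), which simplifies via the sum formulas in the proof of Proposition~\ref{prop:measured laminations to useful functions} to $0$ once all discrete $u^-, u^+$ contributions are accumulated; together with $u_{+\infty}^- = 0$ this gives $F(+\infty)=0$. The only step that requires even a moment's thought is this last limit computation, but it is the same bookkeeping already carried out in the proof of Proposition~\ref{prop:measured laminations to useful functions} showing $\lim_{x\to\pm\infty} f(x) = 0$, so there is no real obstacle.

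Once all six axioms are verified, we conclude that $(F,0)$ is a red-blue function pair, completing the proof. The whole argument is essentially a sign-chase through Definition~\ref{def:measured laminations to useful functions}, with the straight descending orientation killing off the axioms involving sources and blue intervals, and the nonpositivity $F\le 0$ trivializing the ``graphs don't cross'' axiom against the zero function $B$.
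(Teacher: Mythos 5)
The paper records this lemma without proof, remarking only that it follows directly from Definitions~\ref{def:red-blue function pair}~and~\ref{def:measured laminations to useful functions} and Proposition~\ref{prop:measured laminations to useful functions}; your direct verification of the six axioms is exactly the argument the authors have in mind, and it is correct (noting that in the straight descending orientation $-\infty$ is the unique sink, $+\infty$ the unique source, so $\RR$ is one red interval, the signs $u_a^- \ge 0 \ge u_a^+$ give $\maxx{F}=F$, the nonpositivity $F\le 0$ handles the non-crossing axiom against $B\equiv 0$, and the boundary values $F(\pm\infty)=0$ give axiom (4)).
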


Now wow define the function $\mathcal L\to \mathcal{S}(Q)$.
\begin{definition}\label{def:measured lamination to stability conditions}
    Let $(L,\Meas)\in \mathcal{L}$, let $F$ be as in Definition~\ref{def:measured laminations to useful functions}, and let $Q$ be the continuous quiver of type $\mathbb{A}$ with straight descending orientation.
    The map $\Phi:\mathcal{L}\to \mathcal{S}(Q)$ is defined by setting $\Phi((L,\Meas))$ equal to the equivalence class of $(F,0)$.
\end{definition}

\begin{lemma}\label{lem:measured lamination and geodesics}
    Let $L\in \mathcal{L}$ and let $\partial\hyper$ be indexed as $\RR\cup\{-\infty\}$, as before.
    Suppose there are points $a,b\in \partial\hyper$ such that for all $x\in(a,b)$ we have $\Meas(\frac{L}{x})\geq \Meas(\frac{L}{a})$ and $\Meas(\frac{L}{x})\geq \Meas(\frac{L}{b})$.
    Then the geodesic in $\hyper$ uniquely determined by $a$ and $b$ is in $L$.
\end{lemma}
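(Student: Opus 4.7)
The plan is to argue by contradiction. Suppose $\gamma_{ab}\notin L$; by maximality of $L$ as a set of noncrossing geodesics, some $\gamma'\in L$ crosses $\gamma_{ab}$. Write $\gamma'$ as the geodesic determined by $c,d\in\partial\hyper$, where $c\in(a,b)$ is the inner endpoint and $d$ lies on the complementary arc. By symmetry between $a$ and $b$, I may assume $d$ lies on the $a$-side of $\gamma_{ab}$, so either $d\in(-\infty,a)$ or $d=-\infty$; the other case ($d$ on the $b$-side) is handled by an entirely analogous argument that instead contradicts the hypothesis $\Meas(\frac{L}{c})\geq \Meas(\frac{L}{b})$.

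The comparison I will make is $\Meas(\frac{L}{c})$ versus $\Meas(\frac{L}{a})$ through the decomposition
\[
\Meas\!\left(\frac{L}{c}\right)-\Meas\!\left(\frac{L}{a}\right) \;=\; \Meas\!\left(\frac{L}{c}\setminus\frac{L}{a}\right) \;-\; \Meas\!\left(\frac{L}{a}\setminus\frac{L}{c}\right).
\]
From the definitions one checks $\frac{L}{c}\setminus\frac{L}{a}=\{\gamma\in L : a\leq\gamma_a<c<\gamma_b\}$ and $\frac{L}{a}\setminus\frac{L}{c}=\{\gamma\in L : \gamma_a<a<\gamma_b\leq c\}$. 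The two claims to establish are (i) $\frac{L}{c}\setminus\frac{L}{a}=\emptyset$ and (ii) $\Meas(\frac{L}{a}\setminus\frac{L}{c})>0$; together these yield $\Meas(\frac{L}{c})<\Meas(\frac{L}{a})$, contradicting the hypothesis at $x=c\in(a,b)$.

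For (i), any $\gamma\in\frac{L}{c}\setminus\frac{L}{a}$ has $\gamma_a\in[a,c)$ and $\gamma_b>c$, so in the cyclic order on $\partial\hyper$ the four points appear as $d,\gamma_a,c,\gamma_b$, making the endpoints of $\gamma$ and $\gamma'$ interleaved; thus $\gamma$ crosses $\gamma'$, contradicting that $L$ is noncrossing. For (ii), I choose a small open arc $A\ni d$ disjoint from $[a,b]$ (an arc neighborhood of $-\infty$ if $d=-\infty$) together with a small open arc $B\ni c$ contained in $(a,b)$. The basic open set $O_{A,B}\subseteq L$ contains $\gamma'$, is nonempty, and therefore has positive measure by Definition~\ref{def:measured lamination}. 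The same interleaving argument applied to any $\gamma\in O_{A,B}$ with $\gamma_b>c$ would force $\gamma$ to cross $\gamma'$, so in fact $\gamma_b\leq c$ for every $\gamma\in O_{A,B}$; hence $O_{A,B}\subseteq\frac{L}{a}\setminus\frac{L}{c}$, giving (ii).

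The main obstacle will be executing the interleaving/noncrossing argument cleanly in cyclic coordinates on $\partial\hyper\cong\RR\cup\{-\infty\}$, particularly in the boundary case $d=-\infty$ where $A$ must be taken as an arc neighborhood of the wraparound point and shown to lie on the correct side of $\gamma_{ab}$. One also has to verify in each configuration that a geodesic $\gamma\in O_{A,B}$ with $\gamma_b$ slightly greater than $c$ really does cross $\gamma'$, so that the positive measure of $O_{A,B}$ is absorbed entirely into $\frac{L}{a}\setminus\frac{L}{c}$ rather than only partially. Once that is done, the rest of the argument is a direct computation.
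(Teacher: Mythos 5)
Your step (i) is correct, and it follows the same basic idea as the paper's use of the noncrossing condition. But step (ii) has a genuine gap, and it is precisely the issue you flag as a routine verification at the end: it is \emph{not} true that every $\gamma\in O_{A,B}$ with $\gamma_b>c$ must cross $\gamma'$. Take $\gamma\in L$ with $\gamma_a\le d=\gamma'_a$ and $\gamma_b>c=\gamma'_b$: then $\gamma$ is nested strictly outside $\gamma'$ (or shares the endpoint $d$ with it), the endpoints $\gamma_a,d,c,\gamma_b$ do \emph{not} interleave, and $\gamma$ and $\gamma'$ are disjoint. Such $\gamma$ can perfectly well be present in $L$ (a nested ``rainbow'' around $\gamma'$, or a fountain at $d$), they do land in $O_{A,B}$ whenever $A\ni d$ and $B\ni c$ (any open $A\ni d$ contains points $\le d$), and they lie in $\tfrac{L}{a}\cap\tfrac{L}{c}$ rather than in $\tfrac{L}{a}\setminus\tfrac{L}{c}$. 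So $O_{A,B}\not\subseteq\tfrac{L}{a}\setminus\tfrac{L}{c}$, and the positive measure of $O_{A,B}$ does not get ``absorbed entirely'' into the set you need. In fact one cannot avoid these geodesics by shrinking $A$: as long as $d\in A$, the left tail of $A$ is nonempty, and it is exactly the geodesics with one endpoint in that tail and one just to the right of $c$ that spoil the containment. Shrinking $A$ to exclude $d$ would remove $\gamma'$ from $O_{A,B}$, so you would lose the guarantee that $O_{A,B}\neq\emptyset$.

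It is worth noting that the obstruction is not an artifact of your bookkeeping: from (i) together with the hypothesis one already has $\tfrac{L}{c}\subseteq\tfrac{L}{a}$ and $\Meas(\tfrac{L}{c})=\Meas(\tfrac{L}{a})$, hence $\Meas(\tfrac{L}{a}\setminus\tfrac{L}{c})=0$, so in particular $\Meas(\{\gamma'\})=0$, i.e.\ $\gamma'$ is \emph{not} a discrete arc and must be approached by other leaves of $L$. The whole difficulty is to rule out the case where it is approached only from the outside, which is exactly what defeats your open-set argument. The paper's proof takes a different route: it fixes a crossing leaf $\alpha$, argues (via the discrete-arc/fountain/rainbow trichotomy of Definition~\ref{def:discrete, fountain, and rainbow} and Proposition~\ref{prop:discrete, fountain, rainbow are positive}) that there can be no fountain at the crossing point $c$ and that $c$ must support geodesics on both sides, and concludes $\alpha$ is a discrete arc, whence $\Meas(\{\alpha\})>0$ directly contradicts $\Meas(\tfrac{L}{a}\setminus\tfrac{L}{c})=0$. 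Whatever one thinks of the exposition there, the substantive step it contributes --- excluding the ``outer approach'' to $\alpha$ --- is the one your argument skips.
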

\begin{proof}
    For contradiction, suppose there is $\alpha\in L$ such that $\alpha$ is uniquely determined by $c$ and $d$, where $a<c<b<d$.
    Then, we must have $\beta\in L$ uniquely determined by $c$ and $d$, or else there is a set $K$ with positive measure such that $K\subset \frac{L}{b}$ but $K\not\subset\frac{L}{c}$.
    Similarly, we must have $\gamma\in L$ uniquely determineed by $a$ and $c$.
    Now, we cannot have a fountain at $c$ or else we will have a set with positive measure $K$ such that $K\subset \frac{L}{b}$ or $K\subset\frac{L}{a}$ but $K\not\subset\frac{L}{c}$.
    Since $c$ has a geodesic to both the left and right, both $\alpha$ must be discrete.
    But then $\{\alpha\}$ has positive measure, a contradiction.
    Thus, there is no $\alpha\in L$ such that $\alpha$ is uniquely determined by $c$ and $d$, where $a<c<b<d$.
    Similarly, there is no $\alpha\in L$ such that $\alpha$ is uniquely determined by $c$ and $d$, where $c<a<d<b$.
    Therefore, since $L$ is maximal, we must have the geodesic uniquely determined by $a$ and $b$ in $L$.
\end{proof}

\begin{proposition}\label{prop:measured lamination to four point conditions}
    Let $(L,\Meas)\in \mathcal{L}$, let $F$ be as in Definition~\ref{def:measured laminations to useful functions}, and let $Q$ be the continuous quiver of type $\mathbb{A}$ with straight descending orientation.
    Then $\Phi((L,\Meas))\in\mathcal{S}_\fpc(Q)$.
\end{proposition}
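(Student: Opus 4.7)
The plan is to argue by contradiction: a failure of the four point condition will produce two crossing geodesics in $L$, contradicting the non-crossing property of a lamination. Suppose $\Phi((L,\Meas)) \notin \mathcal{S}_\fpc(Q)$. Then, taking $(R,B) = (F, 0)$ as the distinguished representative of $\Phi((L,\Meas))$, there exist a $\sigma$-semistable module $M_I$, a height $h \in \RR$, and four distinct points $x_1 < x_2 < x_3 < x_4$ in $\widehat{I}$ such that $(x_i, h) \in \widehat{\mathcal{G}}(R,B)$ for each $i$.

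The first step will be to pin down the value of $F$ at each $x_i$. For the straight descending orientation, $-\infty$ is the unique sink and $+\infty$ is the unique source, so every element of $\widehat{Q}$ is red, and membership of $(x_i,h)$ in $\widehat{\mathcal{G}}(R,B)$ reduces to $\minn{F}(x_i) \leq h \leq \maxx{F}(x_i)$. At the two interior points $x_2, x_3$, condition (2) of Definition~\ref{def:stable module} gives $\maxx{F}(x_i) \leq h$, and combining this with $\maxx{F} = F$ from Definition~\ref{def:red-blue function pair}(1) forces $F(x_2) = F(x_3) = h$. At the endpoints $x_1, x_4$, one obtains only the one-sided bound $F(x_i) \geq h$. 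Throughout the open interval between $\min \widehat{I}$ and $\max \widehat{I}$, semistability gives $F(x) \leq h$, equivalently $\Meas(L/x) \geq -h$.

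Next I would apply Lemma~\ref{lem:measured lamination and geodesics} to two interlocking pairs among the $x_i$. Using the identification of $-\infty_+ \in \widehat{Q}$ with $-\infty \in \partial\hyper$, and of every other element of $\widehat{Q}$ with itself in $\partial\hyper$, I take the pair $(x_1, x_3)$: for each $x \in (x_1, x_3)$ the bounds above give $F(x) \leq h \leq F(x_1)$ and $F(x) \leq h = F(x_3)$, so the inequalities $\Meas(L/x) \geq \Meas(L/x_1)$ and $\Meas(L/x) \geq \Meas(L/x_3)$ hold, and the lemma delivers the geodesic $\gamma_{13} \in L$ determined by $x_1$ and $x_3$. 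The same argument applied to $(x_2, x_4)$ yields $\gamma_{24} \in L$. Since $x_1 < x_2 < x_3 < x_4$, the geodesics $\gamma_{13}$ and $\gamma_{24}$ cross in $\hyper$, which is the desired contradiction.

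The only genuine care is in the boundary case $x_1 = -\infty_+$: there the lemma's hypothesis $\Meas(L/x) \geq \Meas(L/x_1)$ is vacuous because $L/(-\infty) = \emptyset$, so the argument goes through unchanged. I do not foresee a deeper obstacle — once the modified graph condition and semistability are unpacked into pointwise inequalities on $F$, the elementary topological fact that two chords of $\partial\hyper$ with interleaved endpoints must cross does all the remaining work.
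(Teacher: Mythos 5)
Your proof is correct and follows essentially the same strategy as the paper's: unpack semistability at the four points, apply Lemma~\ref{lem:measured lamination and geodesics}, and derive a contradiction from crossing geodesics in $L$. The paper produces all six geodesics on the four points and observes that the quadrilateral contains both diagonals, while you more economically extract just the two interleaved diagonals $\gamma_{13}$ and $\gamma_{24}$; the contradiction is identical.
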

\begin{proof}
    For contradiction, suppose there exists a $\Phi((L,\Meas))$-semistable module $M_I$ such that $\lvert (\widehat{I}\times\{h\})\cap (\widehat{Q}\times \RR)\rvert \geq 4$.
    Choose 4 points $a<b<c<d$ in $\widehat{Q}$ corresponding to four intersection points.
    
    For the remainder of this proof, write $x\geo y$ to mean the geodesic in $\hyper$ uniquely determined by $x\neq y\in\partial\hyper$.
    By Lemma~\ref{lem:measured lamination and geodesics}, we have the following geodesics in $L$: $a\geo b$, $a\geo c$, $a\geo d$, $b\geo c$, $b\geo d$, and $c\geo d$.
    However, this is a quadrilateral with \emph{both} diagonals, as shown in Figure~\ref{fig:quadrilateral with diagonals}.
    Since $L$ is a lamination, this is a contradiction.
    \begin{figure}
    \centering
    \begin{tikzpicture}[scale=1.3]
        \draw(1,0) arc (0:360:1);
        \filldraw[fill=black, draw=black] (.707,.707) circle[radius=.4mm];
        \draw(.707,.707) node[anchor=south west] {$c$};
        \filldraw[fill=black, draw=black] (-.707,.707) circle[radius=.4mm];
        \draw(-.707,.707) node[anchor=south east] {$d$};
        \filldraw[fill=black, draw=black] (.707,-.707) circle[radius=.4mm];
        \draw(.707,-.707) node[anchor=north west] {$b$};
        \filldraw[fill=black, draw=black] (-.707,-.707) circle[radius=.4mm];
        \draw(-.707,-.707) node[anchor=north east] {$c$};
        \filldraw[draw=black,fill=black](-1,0) circle[radius=.4mm];
        \draw(-1,0) node[anchor=east] {$-\infty$};
        \draw (.707,.707) arc (315:225:1);
        \draw (.707,.707) arc (135:225:1);
        \draw (.707,-.707) arc (45:135:1);
        \draw (-.707,.707) arc (45:-45:1);
        \draw(-.707,-.707) -- (.707,.707);
        \draw (-.707,.707) -- (.707,-.707);
    \end{tikzpicture}
    \caption{The geodesics $a\geo b$, $a\geo c$, $a\geo d$, $b\geo  c$, $b\geo d$, and $c\geo d$ used in the proof of Proposition~\ref{prop:measured lamination to four point conditions}.
    Notice $a\geo b$, $b\geo c$, $c\geo d$, and $a\geo d$ form a quadrilateral and its diagonals, $a\geo c$ and $b\geo d$, cross.}\label{fig:quadrilateral with diagonals}
    \end{figure}
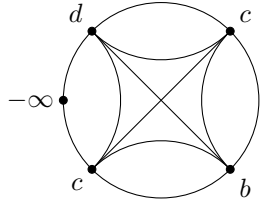
\end{proof}

\begin{theorem}\label{thm:measured laminations and straight four point conditions}
    Let $Q$ be the continuous quiver of type $\mathbb{A}$ with straight descending orienation.
    Then $\Phi:\mathcal{L}\to \mathcal{S}_\fpc(Q)$ is a bijection.
    Furthermore, for a measured lamination $L$ and stability condition $\Phi(L)$, there is a bijection $\phi_L$ from $L$ to $\Phi(L)$-semistable indecomposable modules.
\end{theorem}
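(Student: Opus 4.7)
The plan is to leverage Theorem~\ref{thm:igusa todorov}, which already gives a bijection between maximally $\Npi$-compatible sets and unmeasured laminations, together with Theorem~\ref{thm:four point equivalent to cluster}, which identifies $\mathcal{S}_\fpc(Q)$ with maximal $\Npi$-compatible sets via the semistable-module correspondence. The task then splits into (i) establishing $\phi_L$ by showing that the map $a\geo b\mapsto M_{(a,b]}$ is a bijection from $L$ onto the $\Phi(L)$-semistable indecomposables, and (ii) defining an inverse $\Psi$ to $\Phi$ at the level of measures.

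For (i): given $a\geo b\in L$, I would verify $M_{(a,b]}$ is $\Phi(L)$-semistable. The non-crossing property of a lamination forces every $\gamma'\in L/a$ to satisfy $\gamma'_b\geq b$, so $L/a\setminus L/b$ is a sub-fountain at $b$ and $L/b\setminus L/a$ is a sub-fountain at $a$. A comparison of these sub-fountains, pinned symmetrically by $a\geo b$, yields $\Meas(L/a)=\Meas(L/b)$, equivalently $F(a)=F(b)$. Non-crossing also gives $L/a\subseteq L/x$ for $a<x<b$, hence $F(x)\leq F(a)$, verifying the chord condition of Definition~\ref{def:stable module}. The reverse direction (each semistable $M_{(a,b]}$ corresponds to a geodesic $a\geo b\in L$) follows from Lemma~\ref{lem:measured lamination and geodesics}, and injectivity plus surjectivity of $\phi_L$ are then immediate.

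For (ii): given $\sigma\in\mathcal{S}_\fpc(Q)$ with representative $(R,0)$, I would set $\Psi(\sigma):=(L,\Meas)$, where $L$ is the lamination obtained by applying Theorem~\ref{thm:igusa todorov} to the set of $\sigma$-semistable modules (maximally $\Npi$-compatible by Theorem~\ref{thm:four point equivalent to cluster}), and $\Meas$ is assembled from $R$ as follows: atoms at fountain boundary points are read off via Proposition~\ref{prop: local variation at a is u+(a) + u-(a)}, with $|u_x^-|$ and $|u_x^+|$ giving the left- and right-fountain masses at $x$, while the continuous-variation part of $R$ is distributed across the rainbows of $L$. Positivity on nonempty basic open sets follows from Proposition~\ref{prop:discrete, fountain, rainbow are positive} together with the four-point condition, which precludes nonempty open sets of zero variation. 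To establish $\Psi\circ\Phi=\mathrm{id}$ and $\Phi\circ\Psi=\mathrm{id}$, I would track the formulas $F(a)=-\Meas(L/a)$, $u_a^-=\Meas(L\cdot a)$, and $u_a^+=-\Meas(a\cdot L)$ from Definition~\ref{def:measured laminations to useful functions}, noting that the equivalence of red-blue pairs absorbs any additive constant.

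The main obstacle is the construction of $\Meas$ in step (ii) and the verification that it is genuinely a measure in $\mathcal{L}$. This requires a clean dictionary between the three lamination features from Definition~\ref{def:discrete, fountain, and rainbow} (discrete arcs, fountains, rainbows) and the three analytic features of $R$ (isolated flat chords, atomic jumps $u_x^\pm$, and continuous variation of the continuous part of $R$). A secondary subtlety is the symmetry claim $\Meas(L/a)=\Meas(L/b)$ for $a\geo b\in L$ used in step (i): while geometrically natural, a rigorous argument must invoke the measured-lamination axioms and the finite-measure consequence to control the sub-fountain tails at $a$ and $b$.
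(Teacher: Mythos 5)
Your overall strategy mirrors the paper's: both use Theorem~\ref{thm:igusa todorov} to obtain an unmeasured lamination from the set of $\sigma$-semistable modules, Theorem~\ref{thm:four point equivalent to cluster} to identify $\mathcal{S}_\fpc(Q)$ with maximal $\Npi$-compatible sets, and then read off a candidate measure from the red function $R$ via the dictionary $\Meas(L\cdot x)=u_x^-$, $\Meas(x\cdot L)=-u_x^+$, $\Meas(L/x)=R(x)$. However, two steps in your outline have genuine problems.

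First, the claim in step (i) that non-crossing yields $\Meas(L/a)=\Meas(L/b)$ is false. Non-crossing only gives you $L/a\setminus L/b\subseteq L\cdot b$ and $L/b\setminus L/a\subseteq a\cdot L$, and these tails need not have equal mass. A full fountain through a single point $c$ gives an explicit counterexample: for $a=c$ and $b>c$ with $c\geo b\in L$, one has $\Meas(L/a)=0<\Meas(L/b)$. What actually rescues semistability is not equality of $F(a)$ and $F(b)$, but the fact that Definition~\ref{def:stable module} asks only for the vertical segments $[\minn{R}(a),\maxx{R}(a)]$ and $[\minn{R}(b),\maxx{R}(b)]$ to overlap; the atoms $u_a^\pm$ and $u_b^\pm$ (precisely the tail-fountain masses you identified) supply the slack. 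You then take $h=\sup_{a<x<b}R(x)$ and check $R_+(a)\le h\le R(a)$, $R_-(b)\le h\le R(b)$ using $L/a\subseteq L/x$ and $L/b\subseteq L/x$ for $x\in(a,b)$.

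Second, and more seriously, invoking Proposition~\ref{prop:discrete, fountain, rainbow are positive} to establish positivity is circular: that proposition \emph{assumes} $(L,\Meas)$ is already a measured lamination (in particular $\Meas(O_{A,B})>0$ for nonempty basic opens) and deduces positivity of discrete arcs, fountains and rainbows. In surjectivity you are trying to prove the hypothesis, not apply the conclusion. The paper's argument here is the real content of the theorem: for a nonempty $O_{A,B}$ containing $\gamma$, either the stability indicator for $M_I$ occurs at a range of heights $[h_0,h_1]$ (giving $\Meas(\{\gamma\})\ge h_1-h_0>0$ directly from a flat chord of $R$), or the height is unique and one produces a sequence of stability indicators with heights $h_i\to h$ and endpoints $a_i\to a$, $b_i\to b$ via Lemma~\ref{lem:local max and local min}(\ref{lem:local max and local min:closed box}) and the four-point condition, trapping a positive-measure sub-family inside $O_{A,B}$. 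This case analysis (the paper's Cases (1) and (2), including the edge cases $a=-\infty$ and the asymmetry in the strict versus weak inequalities) is where the work lives, and your sketch does not supply a substitute for it.
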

\begin{proof}
    By the proof of Proposition~\ref{prop:measured lamination to four point conditions}, we see that the second claim follows.
    Thus, we now show $\Phi$ is a bijection.

    \textbf{Injectivity.}
    Consider $(L,\Meas)$ and $(L',\Meas')$ in $\mathcal{L}$.
    Let $\sigma=\Phi(L,\Meas)$ and $\sigma'=\Phi(L',\Meas')$.
    If $L\neq L'$ then we see that the set of $\sigma$-semistable modules is different from the set of $\sigma'$-semistable modules.
    Thus, $\sigma\neq\sigma'$.
    If $L=L'$ but $\Meas\neq\Meas'$ there must be some $x\in\RR\cup\{-\infty\}$ such that $\Meas(\frac{L}{x})\neq \Meas'(\frac{L'}{x})$.
    But the functions $F$ and $F'$ from $L$ and $L'$, respectively using Definition~\ref{def:measured laminations to useful functions}, both have the same limits at $\pm\infty$.
    Thus, $\widehat{\mathcal{G}}(F,0)$ is not a vertical translation of $\widehat{\mathcal{G}}(F',0)$ in $\RR^2$.
    Therefore, $\sigma\neq \sigma'$.

    \textbf{Surjectivity.}
    Let $\sigma$ be a stability condition.
    Let $T$ be the maximal $\Npi$-compatible set of indecomposable modules determined by $\sigma$ (Theorem~\ref{thm:four point equivalent to cluster}).
    Let $L$ be the lamination of $\hyper$ uniquely determined by $T$ (Theorem~\ref{thm:igusa todorov}).
    In particular, the indecomposable $M_I$ corresponds to the geodesic uniquely determined by $\inf I$ and $\sup I$.

    Let $(R,B)$ be the representative of $\sigma$ such that $B=0$; that is, $(R,B)=(R,0)$.
    For each $x\in \partial\hyper$, let
    \begin{align*}
        \Meas(L\cdot x) &= u_x^- &
        \Meas(x\cdot L) &= -u_x^+ \\
        \Meas\left(\frac{L}{x}\right) &= R(x).
    \end{align*}
    Since $r$ must have bounded variation and $\sum_{x\in\overline{R}} |u_x^-|+|u_x^+|<\infty$, we see $\Meas(L) <\infty$.

    Let $O_{A,B}\subset L$ be a basic open subset.
    If $O_{A,B}=\emptyset$ then we're done.

    Now we assume $O_{A,B}\neq\emptyset$ and let $\gamma\in O_{A,B}$.
    If there exist two stability indicators for the indecomposable $M_I$ corresponding to $\gamma$, with heights $h_0<h_1$, then we know $\Meas(\{\gamma\})>|h_1-h_0|>0$ and so $\Meas(O_{A,B})>0$.

    We now assume there is a unique stability indicator of height $h$ for the indecomposable $M_I$ corresponding to $\gamma$.
    Without loss of generality, since $O_{A,B}=O_{B,A}$, assume $a=\gamma_a\in A$ and $b=\gamma_b\in B$.
    We know that, for all $a<x<b$, we have $R(x)\leq R(a)$ and $R(x)\leq R(b)$.
    There are two cases: (1) $R(x)<R(a)$ and $R(x)<R(b)$, for all $x\in(a,b)$, and (2) there exists $x\in(a,b)$ such that $R(x)=R(a)$ or $R(x)=R(b)$.

    \textbf{Case (1).}
    Let $e=\tan (\frac{1}{2}(\tan^{-1}(a)+\tan^{-1}(b)))$.
    Let $\{h_i\}_{i\in\NN}$ be a strictly increasing sequence such that $h_0=R(e)$ and $\lim_{i\to\infty} h_i = h$.
    By Lemma~\ref{lem:local max and local min}(\ref{lem:local max and local min:closed box}) and our assumption that $\Meas(\{\gamma\})=0$, for each $i>0$, there is a stability indicator with height $h_i$ and endpoints $a_i, b_i$ such that $a < a_i < b_i < b$.
    Then $\lim_{i\to\infty} a_i = a$ and $\lim_{i\to\infty} b_i = b$, again by Lemma~\ref{lem:local max and local min}(\ref{lem:local max and local min:closed box}).
    Since $A$ and $B$ are open, there is some $N\in\NN$ such that, for all $i\geq N$, we have $a_i\in A$ and $b_i\in B$.
    Let $C=(a,a_N)$ and $D=(b_N,b)$.
    Then, $\Meas(O_{C,D})\geq |h-h_N|$ and so $\Meas(O_{A,B})>0$.

    \textbf{Case (2).}
    Assume there exists $x\in(a,b)$ such that $R(x)=R(a)$ or $R(x)=R(b)$.
    Let $e$ be this $x$.
    If $R(x)=R(a)$ and $a=-\infty$, then $R(b)=0$ (or else $\gamma\notin O_{A,B}\subset L$).
    Then we use the technique from Case (1) with $b$ and $+\infty$ to obtain some $C=(b,d)$ and $D=(c,+\infty)$ such that $\Meas(O_{C,D})>0$.
    Thus, $\Meas(O_{A,B})>0$.

    Now we assume $a>-\infty$ and $R(x)=R(a)$ or $R(x)=R(b)$.
    We consider $R(x)=R(b)$ as the other case is similar.
    Since $\sigma$ satisfies the four point condition, we know that for any $\varepsilon > 0$ such that $R(b+\varepsilon) < R(b)$ we must have $0<\lambda<\varepsilon$ such that $R(b+\lambda)> R(b)$.
    Similarly, for any $\varepsilon > 0$ such that $R(a-\varepsilon) < R(b)$ we must have $0\leq\lambda<\varepsilon$ such that $R(a-\lambda)> R(b)$.
    Notice the strict inequality in the statement about $R(b+\lambda)$ and the weak inequality in the statement about $R(a-\lambda)$.

    Let $\{h_i\}$ be a strictly decreasing sequence such that $h_0=0$ and $\lim_{i\to\infty} h_i = h$.
    By Lemma~\ref{lem:local max and local min}(\ref{lem:local max and local min:closed box}) and our assumption that $\Meas(\{\gamma\})=0$, for each $i>0$, there is a stability indicator with height $h_i$ and endpoints $a_i, b_i$ such that $a_i \leq a < b < b_i$.
    Since $\sigma$ satisfies the four point condition, and again by Lemma~\ref{lem:local max and local min}(\ref{lem:local max and local min:closed box}), $\lim_{i\to\infty} b_i = b$.
    Since $A$ and $B$ are open, there is $N\in\mathbb{N}$ such that, if $i\geq N$, we have $a_i\in A$ and $b_i \in B$.
    If $a_i=a$ for any $i>N$, let $C$ be a tiny epsilon ball around $a$ that does not include $b$.
    Otherwise, let $C=(a_N,a)$.
    Let $D=(b,b_N)$.
    Then $\Meas(O_{C,D})\geq |h_N-h|$ and so $\Meas(O_{A,B})>0$.
    
    \textbf{Conclusion.}
    Since $\Meas(L)<\infty$, we know $\Meas(O_{A,B})<\infty$ for each $O_{A,B}$.
    This proves $(L,\Meas)$ is a measured lamination.
    By the definition of $\Phi$, we see that $\Phi(L,\Meas)=\sigma$.
    Therefore, $\Phi$ is surjective and thus bijective.
\end{proof}

\begin{corollary}[to Theorems~\ref{thm:stability tilting}~and~\ref{thm:measured laminations and straight four point conditions}]\label{cor:measured laminations and straight four point conditions for all orientations}
    Let $Q$ be a continuous quiver of type $\mathbb{A}$.
    Then there is a bijection $\Phi:\mathcal{L}\to \mathcal{S}_\fpc(Q)$.
    Furthermore, for a measured lamination $L$ and stability condition $\Phi(L)$, there is a bijection $\phi_L$ from $L$ to $\Phi(L)$-semistable indecomposable modules.
\end{corollary}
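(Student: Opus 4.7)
The plan is to bootstrap from the straight descending case using continuous tilting. Write $Q_0$ for the continuous quiver of type $\mathbb{A}$ with straight descending orientation, and let $\Phi_0:\mathcal{L}\to \mathcal{S}_\fpc(Q_0)$ and $\phi_{L,0}:L\to\{\Phi_0(L)\text{-semistable modules of }\modr(Q_0)\}$ be the bijections supplied by Theorem~\ref{thm:measured laminations and straight four point conditions}. Since the corollary will ultimately rest on transporting these data through tilting, the skeleton of the proof is: reduce to $Q_0$, invoke Theorem~\ref{thm:measured laminations and straight four point conditions}, and check that tilting preserves every piece of the correspondence.

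For any $Q$ with finitely many sinks and sources, I would choose a finite sequence of continuous tilts
\[
Q_0 = \widetilde Q_0 \to \widetilde Q_1 \to \cdots \to \widetilde Q_n = Q.
\]
At each step $\widetilde Q_i \to \widetilde Q_{i+1}$, Theorem~\ref{thm: continuous tilting gives bijection on max compatible sets} produces a bijection $\phi_i:\Ind(\modr(\widetilde Q_i))\to\Ind(\modr(\widetilde Q_{i+1}))$ on indecomposables, and Theorem~\ref{thm:stability tilting} produces a bijection $\Psi_i:\mathcal{S}_\fpc(\widetilde Q_i)\to\mathcal{S}_\fpc(\widetilde Q_{i+1})$ with the property that $\phi_i$ restricts to a bijection between the $\sigma$-semistable and the $\Psi_i(\sigma)$-semistable modules. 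Composing these steps gives $\Psi:=\Psi_{n-1}\circ\cdots\circ\Psi_0$ and $\phi:=\phi_{n-1}\circ\cdots\circ\phi_0$, and I would set
\[
\Phi := \Psi\circ\Phi_0, \qquad \phi_L := \phi\circ\phi_{L,0}.
\]
Bijectivity of $\Phi$ and of each $\phi_L$ is then immediate from bijectivity of the constituent maps, while the semistability compatibility of $\Psi$ and $\phi$ at each tilt step guarantees that $\phi_L$ lands in and bijects onto the set of $\Phi(L)$-semistable modules of $\modr(Q)$.

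The main obstacle I anticipate is well-definedness: a priori, different tilt sequences $Q_0 \to \cdots \to Q$ could yield different composite bijections. I would resolve this by observing that the composite $\phi$ on indecomposables is governed entirely by the reparametrizations $\overline{\mathfrak t}$ from Section~\ref{sec:continuous tilting section} acting on endpoints in $\widehat Q$, and hence depends only on the final orientation of $Q$, not on the chosen path. The composite $\Psi$ is then forced to be the unique bijection $\mathcal{S}_\fpc(Q_0)\to\mathcal{S}_\fpc(Q)$ compatible with $\phi$ under Theorem~\ref{thm:four point equivalent to cluster}, which identifies each $\sigma\in\mathcal{S}_\fpc(\cdot)$ with its maximal $\Npi$-compatible set of semistable modules. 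With well-definedness in hand, the corollary is a direct assembly of Theorems~\ref{thm:measured laminations and straight four point conditions} and~\ref{thm:stability tilting}.
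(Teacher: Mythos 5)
Your proposal is correct and is precisely the route the paper intends: the corollary is stated without a proof body, citing Theorems~\ref{thm:stability tilting}~and~\ref{thm:measured laminations and straight four point conditions}, and your assembly — take $\Phi_0,\phi_{L,0}$ from the straight descending case, transport through a chain of continuous tilts using $\Psi_i$ and $\phi_i$, and compose — is exactly that assembly. The observation that each $\Psi_i$ carries $\sigma$-semistables to $\Psi_i(\sigma)$-semistables under $\phi_i$ is the content of Theorem~\ref{thm:stability tilting}, and the four point condition passes along because $\phi_i$ preserves $\Npi$-compatibility (Lemma~\ref{lem: continuous tilting preserves compatibility}) and $\mathcal{S}_\fpc$ is characterized via maximal $\Npi$-compatible sets (Theorem~\ref{thm:four point equivalent to cluster}).

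One remark on your "main obstacle": the well-definedness concern is not actually needed, since the corollary asserts only that \emph{some} bijection $\Phi:\mathcal{L}\to\mathcal{S}_\fpc(Q)$ exists, not that it is canonical or path-independent. Existence follows immediately from your composition, full stop. Your stated resolution — that the composite on indecomposables "depends only on the final orientation of $Q$" — is also not obviously true as phrased: each $\overline{\mathfrak t}$ involves an explicit $\tan/\tan^{-1}$ reparametrization depending on the chosen tilting point $s$, so different tilt sequences can give genuinely different bijections $\widehat Q_0\to\widehat Q$ on the nose. That said, since canonicity isn't claimed, this does no harm; it is simply extra machinery the statement doesn't require. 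The one implicit assumption worth flagging, if you wanted to be fully rigorous, is that every continuous quiver of type $\mathbb{A}$ with finitely many sinks and sources is reachable from the straight descending one by a finite sequence of continuous tilts and their inverses — this is used without proof both in your sketch and in the paper's introductory Theorem~\ref{thm:B}, but it is an easy finite induction on the number of interior sinks and sources.
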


\begin{theorem}\label{thm:explicit statement about stability conditions, tilting, and measured laminations}
    Let $\sigma\in \mathcal S_{fpc}(Q)$ be the stability condition given by $(R,B)$ and let $\sigma'\in \mathcal S_{fpc}(Q')$ be given by $(R',B')$. Then $\sigma,\sigma'$ give the same measured lamination on the Poincar\'e disk.
\end{theorem}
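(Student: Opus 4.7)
The approach is to reduce to the straight descending case, where the bijection with measured laminations is established by Theorem \ref{thm:measured laminations and straight four point conditions}. By the construction in the proof of Corollary \ref{cor:measured laminations and straight four point conditions for all orientations}, the measured lamination associated to any $\sigma \in \mathcal{S}_\fpc(Q)$ is defined by first iteratively tilting $\sigma$ to a stability condition on the straight descending quiver, then applying the straight descending correspondence. Thus the theorem is equivalent to the statement that the one-step tilting $\sigma \mapsto \sigma'$ composes correctly with further iterated tiltings of $\sigma'$ all the way to the straight descending quiver.

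First, I would fix a finite sequence of continuous tiltings
\[
Q = Q^{(0)} \to Q^{(1)} \to \cdots \to Q^{(n)}
\]
connecting $Q$ to the straight descending quiver $Q^{(n)}$, chosen so that the first arrow $Q^{(0)} \to Q^{(1)}$ is precisely the tilting $Q \to Q'$ of the hypothesis. Each step exists since tilting at the leftmost source strictly decreases the number of red intervals in $\widehat{Q}$, so iterating this process terminates. Let $\widetilde{\Psi}_Q$ denote the composite bijection on stability conditions, and similarly $\widetilde{\Psi}_{Q'}$ for the subsequent sequence starting at $Q^{(1)} = Q'$. By construction we have the identity $\widetilde{\Psi}_Q = \widetilde{\Psi}_{Q'} \circ \Psi_{Q \to Q'}$, so $\widetilde{\Psi}_Q(\sigma) = \widetilde{\Psi}_{Q'}(\sigma')$.

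Second, I would invoke Theorem \ref{thm:measured laminations and straight four point conditions} to obtain a single measured lamination $L$ from the common terminal stability condition $\widetilde{\Psi}_Q(\sigma) = \widetilde{\Psi}_{Q'}(\sigma') \in \mathcal{S}_\fpc(Q^{(n)})$. By the very definition of the bijection in Corollary \ref{cor:measured laminations and straight four point conditions for all orientations}, this $L$ is simultaneously the measured lamination assigned to $\sigma$ on $Q$ and to $\sigma'$ on $Q'$, proving the theorem.

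The main obstacle is making rigorous the claim that Corollary \ref{cor:measured laminations and straight four point conditions for all orientations} defines the bijection $\mathcal{L} \to \mathcal{S}_\fpc(Q)$ in a way that is independent of the chosen tilting path. If the proof of that corollary fixes a canonical path, then one must verify that composing the tilting $Q \to Q'$ with the canonical path for $Q'$ yields the canonical path for $Q$; this may require verifying that tiltings at distinct sources commute in the appropriate sense. A more direct alternative, which I would pursue if this commutation step is delicate, is to bypass the reduction to the straight descending case: use Theorem \ref{thm:stability tilting} together with the explicit identifications from Lemmas \ref{lem: B' has same variation as R} and \ref{lem: B'=min B'} to check directly that the measure $\Meas$ on geodesics assembled from the red-blue pair $(R,B)$ via Definition \ref{def:measured laminations to useful functions} coincides with the one assembled from $(R',B')$, where the geodesic-endpoint bookkeeping is controlled by the map $\overline{\mathfrak{t}}$ applied to the endpoints of each $\sigma$-semistable interval.
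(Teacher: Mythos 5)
Your main reduction strategy does not work as written, and you essentially recognize why: invoking Corollary~\ref{cor:measured laminations and straight four point conditions for all orientations} to prove this theorem is circular. That corollary is stated without a proof of its own (it is said to follow from Theorems~\ref{thm:stability tilting} and~\ref{thm:measured laminations and straight four point conditions}), and the natural way to fill it in is to fix \emph{some} tilting path from $Q$ to the straight descending quiver and transport the bijection. The present theorem is precisely the statement that the resulting measured lamination does not depend on the tilting step taken, which is what you would need in order to justify the identity $\widetilde{\Psi}_Q = \widetilde{\Psi}_{Q'} \circ \Psi_{Q\to Q'}$. Unless the canonical path for $Q$ is guaranteed to begin with $Q\to Q'$ for every choice of $Q'$ arising from a single tilt --- which it is not, since a tilt happens at a chosen $s$ at or below the smallest source, and there are many choices --- your first approach presupposes exactly what must be shown. (There is an additional wrinkle: the tilting in Section~\ref{sec:continuous tilting section} converts the leftmost red interval to blue, so iterating it does not obviously terminate at the all-red straight descending quiver; the corollary is relying on something symmetric or dual that the paper does not spell out, which makes leaning on it even more hazardous.)

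Your ``more direct alternative'' at the end is the right idea and is in fact the route the paper takes, but you leave out the key ingredient. The paper's proof compares the measures of basic open sets $O_{(a,b),(x,y)}$ of geodesics directly, splitting the endpoints according to whether they lie in the tilted region $K=[-\infty,s)$ or its complement $Z$. The only nontrivial case is one endpoint interval in $K$ and the other in $Z$, and there the argument is that the measure of such a family of geodesics is the \emph{variation of the monotone auxiliary function $H$} (the island-flattening of $R$ from Section~\ref{sec:continuous tilting of stability conditions}) on $(a,b)$: the islands of $R$ contribute only to geodesics whose endpoints stay inside $K$, so they are invisible to chords crossing into $Z$. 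One then identifies $H'(z)=H_+(\mathfrak t(z))$ for the tilted pair and observes the variations agree, plus a boundary check that the local variation of $H$ at $-\infty$ matches that of $H'$ at $s$. Your sketch cites Lemma~\ref{lem: B' has same variation as R} and Lemma~\ref{lem: B'=min B'} and the endpoint bookkeeping via $\overline{\mathfrak t}$, which are relevant, but without the decomposition into the $K$/$Z$ cases and the role of $H$ and its variation, the sketch does not yet amount to a proof of the mixed case; this is where the real content lies. I would replace the reduction argument entirely and develop the direct argument along those lines.
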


\begin{proof}
The set of geodesics going from intervals $(a,b)$ to $(x,y)$ has the same measure as those going from $(\mathfrak t(a),\mathfrak t(b))$ to $(\mathfrak t(x),\mathfrak t(y))$ where we may have to reverse the order of the ends. We can break up the intervals into pieces and assume that $(a,b),(x,y)$ are either both in $K$, both in $Z$ or one is in $K$ and the other in $Z$. The only nontrivial case is when $(a,b)$ is in $K$ and $(x,y)$ is in $Z$. In that case, the measure of this set of geodesics for $\sigma$ is equal to the variation of $H$ on $(a,b)$ since the islands don't ``see'' $Z$. Similarly, the measure of the same set of geodesics for $\sigma'$, now parametrized as going from $(\mathfrak t(b),\mathfrak t(a))$ to $(x,y)$ is equal to the variation of $H'$ on $(\mathfrak t(b),\mathfrak t(a))$ where $H'(z)=H_+(\mathfrak t(z))$. 

There is one other case that we need to settle: We need to know that the local variation of $H$ at $-\infty$ is equal to the local variation of $H'$ at $r$. But this holds by definition of $H,H'$.
\end{proof}

An example of a stability condition $\sigma$ and corresponding measured lamination are shown in Figures \ref{fig:integral 1}, \ref{fig:lamination 1}. The continuously tilted stability condition $\sigma'$ is shown in Figure \ref{fig:integral 2}.

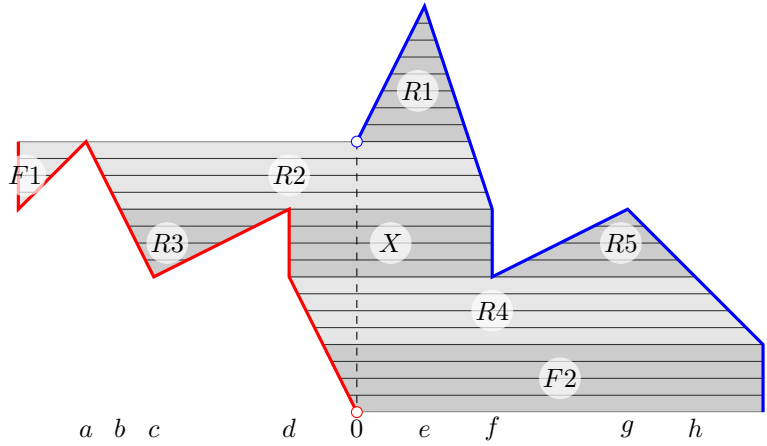
\begin{figure}
    \centering

    \begin{tikzpicture}[scale=.9]
        \begin{scope}
            \clip (-5,2) --(0,2)-- (1,4) -- (2,1) -- (2,0) -- (4,1) -- (6,-1) -- (6,-2) -- (0,-2) -- (-1,0) -- (-1,1) -- (-3,0) -- (-4,2) -- (-5,1) -- (-5,2);
            \filldraw[opacity=.1] (-5,-1) -- (6,-1) -- (6,0) -- (-5,0) -- (-5,-1);
            \filldraw[opacity=.2] (-5,-2) -- (-5, -1) -- (6,-1) -- (6,-2) -- (-5, -2);
            \filldraw[opacity=.2] (-5,1) -- (-5, 0) -- (6,0) -- (6,1) -- (-5, 1);
            \filldraw[opacity=.1] (-5,1) -- (-5, 2) -- (6,2) -- (6,1) -- (-5, 1);
            \filldraw[opacity=.2] (-5,4) -- (-5, 2) -- (6,2) -- (6,4) -- (-5, 4);
            \foreach \x in {0,...,30}
                \draw[white!20!black] (-5,\x*0.25-2) -- (6,\x*0.25-2);
        \end{scope}
        
        
        \draw [blue, very thick] (0,2)--(1,4) -- (2,1) -- (2,0) -- (4,1) -- (6,-1) -- (6,-2);
        \draw [red, very thick] (-5,2) -- (-5,1) -- (-4,2) -- (-3,0) -- (-1,1) -- (-1,0) -- (0,-2);
        \draw[dashed] (0,2) -- (0,-2);
        \filldraw[fill=white, draw=blue] (0,2) circle[radius=.8mm];
        \filldraw[fill=white, draw=red] (0,-2) circle[radius=.8mm];
        
        \filldraw[opacity=.7, draw=white, fill=white] (.9,2.75) circle[radius=.3cm];
        \draw(.9,2.75) node {$R1$};
        \filldraw[opacity=.7, draw=white, fill=white] (-4.9,1.5) circle[radius=.3cm];
        \draw (-4.9,1.5) node {$F1$};
        \filldraw[opacity=.7, draw=white, fill=white] (-1,1.5) circle[radius=.3cm];
        \draw (-1,1.5) node {$R2$};
        \filldraw[opacity=.7, draw=white, fill=white] (3.9,0.5) circle[radius=.3cm];
        \draw (3.9,0.5) node {$R5$};
        \filldraw[opacity=.7, draw=white, fill=white] (-2.8,0.5) circle[radius=.3cm];
        \draw (-2.8,0.5) node {$R3$};
        \filldraw[opacity=.7, draw=white, fill=white] (2,-0.5) circle[radius=.3cm];
        \draw (2,-0.5) node {$R4$};
        \filldraw[opacity=.7, draw=white, fill=white] (3,-1.5) circle[radius=.3cm];
        \draw (3,-1.5) node {$F2$};
        \filldraw[opacity=.7, draw=white, fill=white] (0.5,0.5) circle[radius=.3cm];
        \draw (0.5,0.5) node {$X$};
        
             \coordinate (A) at (-4,-2.5);
        \coordinate (B) at (-3.5,-2.5);
        \coordinate (C) at (-3,-2.5);
        \coordinate (D) at (-1,-2.5);
        \coordinate (O) at (0,-2.5);
        \coordinate (E) at (1,-2.5);
        \coordinate (F) at (2,-2.55);
        \coordinate (G) at (4,-2.5);
        \coordinate (H) at (5,-2.5);
        \foreach \x/\xtext in {A/a,B/b,C/c,D/d,E/e,F/f,G/g,H/h,O/0}
            \draw (\x) node[anchor=south]{$\xtext$};
    \end{tikzpicture}
    \caption{The modified~graph of the red-blue function pair $(R,B)$. 
    Horizontal lines indicated semistable indecomposable representations.
   The rectangle labeled $X$ represents one object with positive measure. The measure of a region is given by its height.
   }
\label{fig:integral 1}
\end{figure}

\begin{figure}
\centering
    \begin{tikzpicture}[scale=.8]
    
        \begin{scope}
            \clip (0,0) circle[radius=5cm];
            \filldraw[opacity=.1, rotate around={-27-9:(0,0)}] (-5,0) arc(90:-90:2.714+1.556); 
            \filldraw[fill=white, rotate around={-27:(0,0)}] (-5,0) arc(90:-90:2.714); 
            \filldraw[opacity=.1] (-5,0) arc(90:-90:1.339); 
            \filldraw[opacity=.1] (-5,0) arc(-90:90:1.2); 
            \filldraw[opacity=.1, rotate around={45:(0,0)}] (-5,0) arc (90:-90:3.836);
            \filldraw[opacity=.1, rotate around={36:(0,0)}] (5,0) arc (270:90:6.882);
            \filldraw[opacity=.1, rotate around={36:(0,0)}] (5,0) arc (90:270:1.339+4.214);
            \foreach \x in {0, 0.1, 0.2, 0.3, 0.4, 0.5, 0.6, 0.7, 0.8, 0.9, 1}
            {
                                        \draw[rotate around={-16+(\x*16):(0,0)}] (-5,0) arc(-90:90:0.1+\x*1.1); 
                \draw (-5,0) arc(90:-90:\x*1.339); 
               \draw[rotate around={-27-(\x*9):(0,0)}] (-5,0) arc(90:-90:2.714+\x*1.556); 
               \draw[rotate around={45+\x*15:(0,0)}] (-5,0) arc (90:-90:3.836-\x*3.836); 
                \draw[rotate around={36+\x*36:(0,0)}] (5,0) arc (270:90:6.882-\x*6.882); 
               \draw[rotate around={\x*36:(0,0)}] (5,0) arc (90:270:1.339+\x*4.214); 
                \draw (5,0) arc (90:270:\x*1.339); 
            }
            \draw[very thick,rotate around={-36:(0,0)}] (-5,0) arc (90:-90:23.523);
            
            \filldraw[opacity=.7, draw=white, fill=white] (-4.5,1) circle[radius=.3cm];
            \draw (-4.5,1) node {$R1$};
            \filldraw[opacity=.7, draw=white, fill=white] (-4.5,-1) circle[radius=.3cm];
            \draw (-4.5,-1) node {$F1$};
            \filldraw[opacity=.7, draw=white, fill=white] (-2.65,0) circle[radius=.3cm];
            \draw (-2.65,0) node {$R2$};
            \draw (-.3,0) node {$X$};
            \filldraw[opacity=.7, draw=white, fill=white] (0,3) circle[radius=.5cm];
            \draw (0,3) node {$R5$};
            \filldraw[opacity=.7, draw=white, fill=white] (-1,-3.5) circle[radius=.5cm];
            \draw (-1,-3.5) node {$R3$};
            \filldraw[opacity=.7, draw=white, fill=white] (2.5,-1) circle[radius=.3cm];
            \draw (2.5,-1) node {$R4$};
            \filldraw[opacity=.7, draw=white, fill=white] (4.5,-1) circle[radius=.3cm];
            \draw (4.5,-1) node {$F2$};
        \end{scope}
    
        \draw[blue, thick] (5,0) arc (0:180:5);
        \draw[red, thick] (-5,0) arc(180:360:5);
        \filldraw[fill=blue, draw=red, thick] (5,0) circle[radius=1mm];
        \draw (5,0) node[anchor=west] {$+\infty$ ($1^-$)};
        \filldraw[fill=red, draw=blue, thick] (-5,0) circle[radius=1mm];
        \draw (-5,0) node[anchor=east] {($1^+$) $-\infty$};
        \filldraw[rotate around={-150:(0,0)}, red] (5,0) circle[radius=1mm];
             \coordinate (A)  at  (-4.503, -2.6);
        \filldraw[rotate around={-120:(0,0)}, red] (5,0) circle[radius=1mm];
        \coordinate (B) at (-3.7, -3.7);
        \filldraw[rotate around={-135:(0,0)}, red] (5,0) circle[radius=1mm];
        \coordinate (C) at (-2.6, -4.503);
        \filldraw[rotate around={-60:(0,0)}, red] (5,0) circle[radius=1mm];
        \coordinate (D) at (2.6, -4.503);
        \filldraw[rotate around={72:(0,0)}, blue] (5,0) circle[radius=1mm];
       \coordinate (E) at (-5, 1.503);
        \filldraw[rotate around={162.8:(0,0)}, blue] (5,0) circle[radius=1mm];
        \coordinate (G) at (1.65, 5);
        \filldraw[rotate around={144:(0,0)}, blue] (5,0) circle[radius=1mm];
        \coordinate (H) at (4.25, 3.2);
        \filldraw[rotate around={36:(0,0)}, blue] (5,0) circle[radius=1mm];
        \coordinate (F) at (-4.206, 3.056);
        
        \foreach \x/\xtext in {A/a,B/b,C/c,D/d,E/e,F/f,G/g,H/h}
            \draw (\x) node{$\xtext$};
    \end{tikzpicture}
    \caption{The lamination of the hyperbolic plain \{corresponding to the stability condition shown in Figure \ref{fig:integral 1}. 
   The thick arc labeled $X$ is an isolated geodesic with positive measure. The measure is equal to the height of the rectangle labeled $X$ in Figure \ref{fig:integral 1}.}
    \label{fig:lamination 1}
\end{figure}
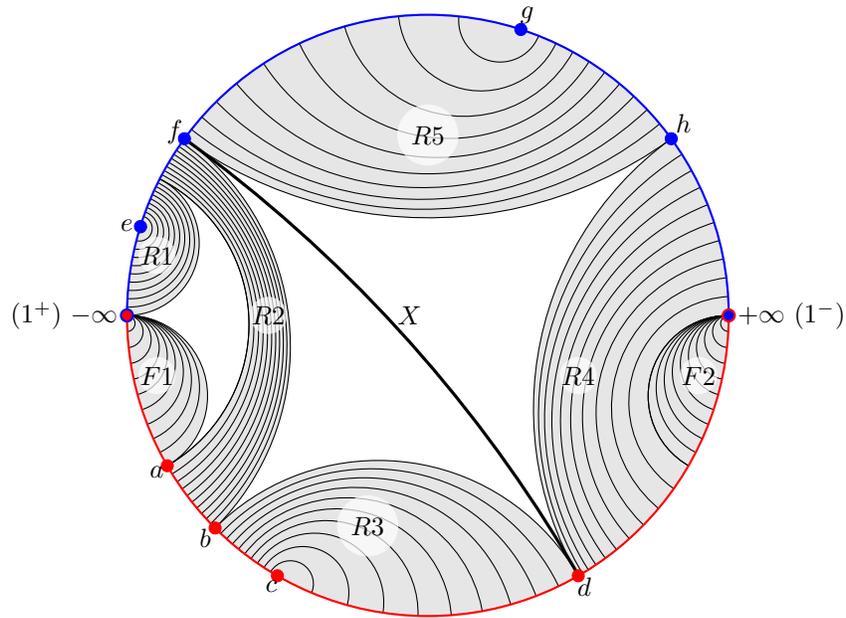


\begin{figure}
    \centering

    \begin{tikzpicture}[scale=.9]
        \begin{scope}
            \clip (-5,-2)--(-4,0)--(-4,1)--(-2,2)--(-1.5,1)--(-1,2)-- (0,3)--(0,2)-- (1,4) -- (2,1) -- (2,0) -- (4,1) -- (6,-1) -- (6,-2) -- (0,-2) -- (-5,-2);
            \filldraw[opacity=.1] (-5,-1) -- (6,-1) -- (6,0) -- (-5,0) -- (-5,-1);
            \filldraw[opacity=.2] (-5,-2) -- (-5, -1) -- (6,-1) -- (6,-2) -- (-5, -2);
            \filldraw[opacity=.2] (-5,1) -- (-5, 0) -- (6,0) -- (6,1) -- (-5, 1);
            \filldraw[opacity=.1] (-5,1) -- (-5, 2) -- (6,2) -- (6,1) -- (-5, 1);
            \filldraw[opacity=.2] (-5,4) -- (-5, 2) -- (6,2) -- (6,4) -- (-5, 4);
            \foreach \x in {0,...,30}
                \draw[white!20!black] (-5,\x*0.25-2) -- (6,\x*0.25-2);
        \end{scope}
        
        
        \draw [blue, very thick] (0,2)--(1,4) -- (2,1) -- (2,0) -- (4,1) -- (6,-1) -- (6,-2);
        \draw [blue, very thick] (0,2) -- (0,3) -- (-1,2)--(-1.5,1) -- (-2,2) -- (-4,1) -- (-4,0) -- (-5,-2);
        \draw[dashed] (0,2) -- (0,-2);
        
        \filldraw[opacity=.7, draw=white, fill=white] (.9,2.75) circle[radius=.3cm];
        \draw(.9,2.75) node {$R1$};
        \filldraw[opacity=.7, draw=white, fill=white] (-.3,2.3) circle[radius=.3cm];
        \draw (-.3,2.3) node {$F1$};
        \filldraw[opacity=.7, draw=white, fill=white] (-.5,1.5) circle[radius=.3cm];
        \draw (-.5,1.5) node {$R2$};
        \filldraw[opacity=.7, draw=white, fill=white] (3.9,0.5) circle[radius=.3cm];
        \draw (3.9,0.5) node {$R5$};
        \filldraw[opacity=.7, draw=white, fill=white] (-2.3,1.4) circle[radius=.3cm];
        \draw (-2.3,1.4) node {$R3$};
        \filldraw[opacity=.7, draw=white, fill=white] (2,-0.5) circle[radius=.3cm];
        \draw (2,-0.5) node {$R4$};
        \filldraw[opacity=.7, draw=white, fill=white] (3,-1.5) circle[radius=.3cm];
        \draw (3,-1.5) node {$F2$};
        \filldraw[opacity=.7, draw=white, fill=white] (0.5,0.5) circle[radius=.3cm];
        \draw (0.5,0.5) node {$X$};
        
            \coordinate (A) at (-0.93,-2.57);
        \coordinate (B) at (-1.5,-2.57);
        \coordinate (C) at (-2.07,-2.57);
        \coordinate (D) at (-4,-2.57);
        \coordinate (O) at (0,-2.5);
        \coordinate (E) at (1,-2.5);
        \coordinate (F) at (2,-2.55);
        \coordinate (G) at (4,-2.5);
        \coordinate (H) at (5,-2.5);
        \foreach \x/\xtext in {A/\overline{\mathfrak t}(a),B/\overline{\mathfrak t}(b),C/\overline{\mathfrak t}(c),D/\overline{\mathfrak t}(d),E/e,F/f,G/g,H/h,O/0}
            \draw (\x) node[anchor=south]{$\xtext$};
    \end{tikzpicture}
    \caption{This is the continuous tilting of Figure \ref{fig:integral 1}. There are two islands $F1$ and $R3$ which have been flipped up. The measured lamination (Figure \ref{fig:lamination 1}) is unchanged, only relabeled. }
\label{fig:integral 2}
\end{figure}
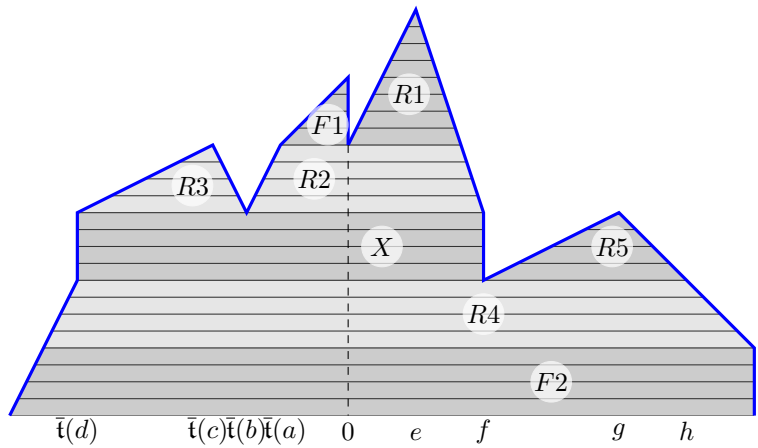

\subsection{Continuous cluster character}\label{sec:continuous cluster character}

We present a candidate for the continuous cluster character using the formula from \cite{maresca5} which applies in the continuous case. This lives in a hypothetical algebra having a variable $x_t$ for every real number $t$. In this algebra which we have not defined, we give a simple formula for the cluster variable of an admissible module $M_{ab}$ where $a<b$ and the quiver $Q$ is oriented to the left (is red) in a region containing $(a,b]$ in its interior. In analogy with the cluster character in the finite case \eqref{eq: CC(M) finite straight} or \cite{maresca5}, {replacing summation with integration}, we define $\chi(M_{ab})$ to be the formal expression:
\begin{equation}\label{eq: continuous cluster character-straight}
\chi(M_{ab})=\int_a^b \frac{x_ax_b\dd t}{x_t^2}.
\end{equation}
This could be interpreted as an actual integral of some function $x_t$. For example, if we let $x_t=t$ then we get $\chi(M_{ab})=b-a$, the length of the support of $M_{ab}$. The constant function $x_t=1$ gives the same result.

The same cluster character formula will be used for modules with support $[a,b)$ in the blue region (where the quiver is oriented to the right).

This can also be written as
\[
    \chi(M_{ab})=x_a\chi(P_b)-x_b\chi(P_a)
\]
where $P_b$ is the projective module at $b$ with cluster character
\[
\chi(P_b)=\int_{-\infty}^b \frac{x_b \dd t}{x_t^2}.
\]
Then the cluster mutation equation
\[
\chi(M_{ac})\chi(M_{bd})=\chi(M_{ab})\chi(M_{cd})+\chi(M_{bc})\chi(M_{ad})
\]
follows, as in the finite $A_n$ case, from the Pl\"ucker relation on the matrix:
\[
\left[\begin{matrix}
x_a & x_b & x_c& x_d\\
\chi(P_a) &\chi(P_b) &\chi(P_c) &\chi(P_d)
\end{matrix}
\right].
\]

In Figures \ref{fig:integral 1} and \ref{fig:lamination 1}, if the measure of $X=M_{df}$ is decreased to zero, the height of the rectangle in Figure \ref{fig:integral 1} will go to zero, the four point condition will be violated and we can mutate $X$ to $X^\ast=M_{bh}$. Then the cluster characters are mutated by the Ptolemy equation:
\[
\chi(X)\chi(X^\ast)=\chi(M_{fh})\chi(M_{bd})+\chi(M_{dh})\chi(M_{bf})
\]
where $\chi(M_{bd})$ and $\chi(M_{fh})$ are given by \eqref{eq: continuous cluster character-straight} and the other four terms have a different equation since there is a source (0) in the middle ($d<0<f$):
\[
\chi(X)=\chi(M_{df})=\int_d^0\int_0^f\frac{x_dx_0x_f}{x_s^2x_t^2}\dd s\dd t+\frac{x_dx_f}{x_0}.
\]
The double integral counts the proper submodules $M_{ds}\oplus M_{tf}\subset X$ and there is one more term for the submodule $X\subseteq X$.

The continuous cluster character will be explained in more detail in another paper.

\subsection{Every $\Npi$-cluster comes from a stability condition}\label{sec:all maximal compatible sets come from a stability condition}

Let $L$ be a lamination of $\hyper$.
Then there exists a measured lamination $(L,\Meas)$ in the following way.
There are at most countably many discrete arcs in $L$.
Assign each discrete arc a natural number $n$.
Then, set $\Meas(\{\gamma_n\})=\frac{1}{1+n^2}$, for $n\in\NN$.
Let $K$ be the set of all discrete geodesics in $L$.
On $L\setminus K$, give each $O_{A,B}$ it's transversal measure.
Thus, we have given $L$ a finite measure satisfying Definition~\ref{def:measured lamination}.
Therefore, $(L,\Meas)$ is a measured lamination.
This means the set of measured laminations, $\mathcal{L}$, surjects on to the set of laminations, $\overline{\mathcal{L}}$, by ``forgetting'' the measure.
Then, the set $\mathcal{S}_\fpc(Q)$, for some continuous quiver of type $\mathbb{A}$ with finitely many sinks and sources, surjects onto the set of $\Npi$-clusters, $\mathcal{T}_{\Npi}$, in the following way.
\begin{displaymath}
    \xymatrix{
        \mathcal{S}_\fpc(Q) \ar@{<->}[r]^-{\Phi} \ar@{-->>}[d]_-{\exists} & \mathcal{L} \ar@{->>}[d] \\
        \mathcal{T}_{\Npi} \ar@{<->}[r]_-{\overline{\Phi}} & \overline{\mathcal{L}}.
    }
\end{displaymath}
Essentially, there is a surjection $\mathcal{S}_\fpc(Q)\twoheadrightarrow \mathcal{T}_{\Npi}$ defined using the surjection $\mathcal{L}\twoheadrightarrow \overline{\mathcal{L}}$.
If we follow the arrows around, we see that each stability condition $\sigma$ is set to the set of $\sigma$-semistable modules, which form an $\Npi$-cluster.

\subsection{Maps between cluster categories of type $\mathbb{A}_n$}\label{sec:finite map using continuous tilting}
Let $Q$ be a quiver of type $\mathbb{A}_n$, for $n\geq 2$.
Label the vertices $1,\ldots,n$ in $Q$ such that there is an arrow between $i$ and $i+1$ for each $1\leq i <n$.

For each $i\in\{-1,0,\ldots,n,n+1,n+2\}$ let
\[ x_i = \tan\left(\frac{i+1}{n+3}\pi - \frac{\pi}{2}\right). \]
We define a continuous quiver $\mathcal{Q}$ of type $\mathbb{A}$ based on $Q$, called the \textdef{continuification} of $Q$.
If $1$ is a sink (respectively, source) then $-\infty$ is a sink (respectively, source) in $\mathcal{Q}$.
If $n$ is a sink (respectively, source) then $+\infty$ is a sink (respectively, source in $\mathcal{Q}$).
For all $i$ such that $2\leq i \leq n-1$, we have $x_i$ is a sink (respectively, source) in $\mathcal{Q}$ if and only if $i$ is a sink (respectively, source) in $Q$.

Define a map $\Omega:\Ind(\mathcal{C}(Q))\to\Indr(\mathcal{Q})$ in Figure~\ref{fig:omega map} on page \pageref{fig:omega map}.
\begin{sidewaysfigure*}
\vspace{82.5ex}
\begin{align*}
P_i &\stackrel{\Omega}{\mapsto} P_{x_i} \\
M_{[i,j]} &\stackrel{\Omega}{\mapsto}
    \begin{cases}
        M_{[x_i,x_j]} & i\text{ is blue},\ j\text{ is red}\\
        M_{[x_i,x_{j+2})} &  i\text{ is blue},\ j\text{ is blue},\ j+1\text{ is not a sink or }j+1=n\\
        M_{[x_i,x_\ell)} & i\text{ is blue},\ j\text{ is blue},\ j+1< n\text{ is a sink},\ \ell=(\text{smallest source}>j)+1\\
        M_{(x_{i-2},x_j]} & i\text{ is red},\ i-1\text{ is not a sink or }i-1=1,\ j\text{ is red}\\
        M_{(x_{i-2},x_{j+2})} & i\text{ is red},\ i-1\text{ is not a sink or }i-1=1,\ j\text{ is blue},\ j+1\text{ is not a sink or }j+1=n\\
        M_{(x_{i-2},x_\ell)} & i\text{ is red},\ i-1\text{ is not a sink or }i-1=1,\ j\text{ is blue},\ j+1< n\text{ is a sink},\ \ell=(\text{smallest source}>j)+1\\
        M_{(x_k,x_j]} & i\text{ is red},\ i-1> 1\text{ is a sink},\ k=(\text{largest source})<i)-1,\ j\text{ is red}\\
        M_{(x_k,x_{j+2})} & i\text{ is red},\ i-1> 1\text{ is a sink},\ k=(\text{largest source})<i)-1,\ j\text{ is blue},\ j+1\text{ is not a sink or }j+1=n\\
        M_{(x_k,x_\ell)} & {\scriptsize i\text{ is red},\ i-1> 1\text{ is a sink},\ k=(\text{largest source})<i)-1,\ j\text{ is blue},\ j+1< n\text{ is a sink},\ \ell=(\text{smallest source}>j)+1 }
    \end{cases} \\
    P_i[1] & \stackrel{\Omega}{\mapsto}
    \begin{cases}
        M_{(x_{i-1},x_{i+1})} & i\text{ is a source} \\
        M_{[x_0, x_{n+1}]} & i\text{ is the only sink} \\
        M_{(x_j,x_{n+1}]} & i\text{ is a sink},\ \not\exists\text{source}>i,\ \exists\text{sink}<i,\ j=(\text{largest source}<i)-1\\
        M_{[0,x_j)} & i\text{ is a sink},\ \not\exists\text{source}<i,\ \exists\text{sink}>i\, j=(\text{smallest source}>i)+1\\
        M_{(x_k,x_\ell)} & i\text{ is a sink},\ \exists\text{source}>i,\ \exists\text{source}<i,\ j=(\text{largest source}<i)-1,\ k=(\text{smallest source}>i)+1 \\
        M_{[x_0,x_{i+1})} & i\text{ is not a sink/source},\ i\text{ is blue},\ \not\exists\text{sink}<i\\
        M_{(x_{i-1},x_{n+1}]} & i\text{ is not a sink/source},\ i\text{ is red},\ \not\exists\text{sink}>i\\
        M_{(x_j,x_{i+1})} & i\text{ is not a sink/source},\ i\text{ is blue},\ \exists\text{sink}<i\, j=(\text{biggest source}<i)-1\\
        M_{(x_{i-1},x_j)} & i\text{ is not a sink/source},\ i\text{ is red},\ \exists\text{sink}>i\, j=(\text{smallest source}>i)+1
    \end{cases}
\end{align*}
\caption{The map $\Omega:\Ind(\mathcal{C}(Q))\to\Indr(\mathcal{Q})$.
In the formulae above, $M_{[i,j]}$ is assumed to be not projective.
We say a discrete vertex $i$ is \textdef{red} if there exists arrows $i-1\leftarrow i \leftarrow i+1$.
We say $i$ is blue if there exist arrows $i-1\rightarrow i \rightarrow i+1$.
Additionally, for a discrete interval $[i,j]$ we say $i$ is red if it meets the above criteria or $i$ is a source.
We say $i$ is blue if it meets the above criteria or $i$ is a sink.
Dually, we say $j$ is red if it satisfies the above criteria or $j$ is a sink.
We say $j$ is blue if it satisfies the above criteria or $j$ is a source.
The phrase ``largest source$<i$'' means the maximal element in the set of sources $s$ such that $s<i$.
The phrase ``smallest source$>i$'' means the minimal element in the set of sources $s$ such that $s<i$.
When dealing with the discrete values we always mean sinks and sources in $Q$.}\label{fig:omega map}
\end{sidewaysfigure*}

Let $1\leq m < n$ such that there is a path $1\to m$ in $Q$ or a path $m\to 1$ in $Q$ (possibly trivial).
Let $Q'$ be obtained from from $Q$ by reversing the path between $1$ and $m$ (if $m=1$ then $Q=Q'$).
It is well known that $\mathcal{D}^b(Q)$ and $\mathcal{D}^b(Q')$ are equivalent as triangulated categories.
Let $F:\mathcal{D}^b(Q)\to\mathcal{D}^b(Q')$ be a triangulated equivalence determined by sending $P_n[0]$ to $P_n[0]$.
Furthermore, we know $\tau\circ F(M) \cong  F \circ \tau(M)$ for every object $M$ in $\mathcal{D}^b(Q)$, where $\tau$ is the Auslander--Reiten translation.
Then this induces a functor $\overline{F}:\mathcal{C}(Q)\to\mathcal{C}(Q')$.
Overloading notation, we denote by $\overline{F}:\Ind(\mathcal{C}(Q))\to\Ind(\mathcal{C}(Q'))$ the induced map on isomorphism classes of indecomposable objects.

Let $\mathcal{Q}'$ be the continuification of $Q'$ and $\Omega':\Ind(\mathcal{C}(Q'))\to\Indr(\mathcal{Q}')$ the inclusion defined in the same way as $\Omega$.
Notice that that orientation of $\mathcal{Q}$ and $\mathcal{Q}'$ agree above $x_m$.
Furthermore, if $m>1$, the interval $(-\infty,x_m)$ is blue in $\mathcal{Q}$ if and only if it is red in $\mathcal{Q}'$ and vice versa.
Using Theorem~\ref{thm: continuous tilting gives bijection on max compatible sets}, there is a map $\phi:\Indr(\mathcal{Q})\to\Indr(\mathcal{Q}')$ such that $\{M,N\}\subset\Indr(\mathcal{Q})$ are $\Npi$-compatible if and only if $\{\phi(M),\phi(N)\}\subset\Indr(\mathcal{Q}'))$ are $\Npi$-compatible.
Following tedious computations, we have the following commutative diagram that preserves compatibility:
\begin{displaymath}
    \xymatrix{
        \Indr(\mathcal{Q}) \ar[r]^-{\phi} &
        \Indr(\mathcal{Q}) \\
        \Ind(\mathcal{C}(Q)) \ar[r]_-{\overline{F}} \ar[u]^-{\Omega} &
        \Ind(\mathcal{C}(Q)). \ar[u]_-{\Omega'}
    }
\end{displaymath}

\subsubsection{An example for $\mathbb{A}_4$ quivers}\label{sec:finite map and continuous tilting example}
Let $Q,Q'$ be the following quivers and let $\mathcal{Q},\mathcal{Q}'$ be the respective continuifications defined above with functions $\Omega,\Omega'$.

\begin{align*}
	Q&=\xymatrix{
	1 \ar[r] & 2 \ar[r] & 3 \ar[r] & 4
	} &
	Q'&=\xymatrix{
	1 & 2 \ar[l] & 3 \ar[l] \ar[r] & 4.
	} \\
	&\Omega: \Ind(\mathcal{C}(Q)) \to \Indr(\mathcal{Q}) &
	&\Omega': \Ind(\mathcal{C}(Q')) \to \Indr(\mathcal{Q'})
\end{align*}

Let $\overline{F}:\mathcal{C}(Q)\to\mathcal{C}(Q')$ be defined as above.
A visualization of the commutative diagram above in $\hyper$ is contained in Figure~\ref{fig:finite embedding in hyperbolic plane} on page~\pageref{fig:finite embedding in hyperbolic plane}.

For $\Omega: \Ind(\mathcal{C}(Q)) \to \Indr(\mathcal{Q})$:
\begin{align*}
	A &= \Omega(P_4) &
	F &= \Omega(M_{23}) &
	K &= \Omega(P_3[1])  \\
	B &= \Omega(P_3) &
	G &= \Omega(I_3) &
	L &= \Omega(I_1) \\
	C &= \Omega(P_2) &
	H &= \Omega(P_4[1])) &
	M &= \Omega(P_2[1]) \\
	D &= \Omega(P_1) &
	I &= \Omega(S_2) &
	N &= \Omega(P_1[1]) \\
	E &= \Omega(S_3) &
	J &= \Omega(I_2).
\end{align*}

To save space we will indicate an indecomposable module by its support interval.

In $\modr(\mathcal{Q}))$:
{\scriptsize
\begin{align*}
	A &= [\tan(3\pi/14),+\infty) &
	F &= [\tan(-\pi/14),\tan(5\pi/14)) &
	K &= [\tan(-5\pi/14),\tan(3\pi/14)) \\
	B &= [\tan(\pi/14),+\infty) &
	G &= [\tan(-3\pi/14),\tan(5\pi/14)) &
	L &= [\tan(-3\pi/14),\tan(\pi/14)) \\
	C &= [\tan(-\pi/14),+\infty) &
	H &= [\tan(-5\pi/14),\tan(5\pi/14)) &
	M &= [\tan(-5\pi/14),\tan(\pi/14)) \\
	D &= [\tan(-3\pi/14),+\infty) &
	I &= [\tan(-\pi/14),\tan(3\pi/14)) &
	N &= [\tan(-5\pi/14),\tan(-\pi/14)) \\
	E &= [\tan(\pi/14),\tan(5\pi/14)) &
	J &= [\tan(-3\pi/14),\tan(3\pi/14)).
\end{align*}
}

For $\Omega': \Ind(\mathcal{C}(Q')) \to \Indr(\mathcal{Q}')$:
\begin{align*}
	A &= \Omega'(P'_4) &
	F &= \Omega'(\textcolor{orange}{I'_2}) &
	K &= \Omega'(P'_3[1]) \\
	B &= \Omega'(P'_3) &
	G &= \Omega'(I'_3) &
	L &= \Omega'(\textcolor{orange}{P'_1}) \\
	C &= \Omega'(\textcolor{orange}{M'_{23}}) &
	H &= \Omega'(P'_4[1]) &
	M &= \Omega'(\textcolor{orange}{P'_2}) \\
	D &= \Omega'(\textcolor{orange}{I'_4}) &
	I &= \Omega'(\textcolor{orange}{P'_1[1]}) &
	N &= \Omega'(\textcolor{orange}{S'_2}) \\
	E &= \Omega'(\textcolor{orange}{I'_1}) &
	J &= \Omega'(\textcolor{orange}{P'_2[1]}).
\end{align*}

In $\modr(\mathcal{Q}'))$:
{\scriptsize
\begin{align*}
	A &= [\tan(3\pi/14),+\infty) &
	F &= (\textcolor{orange}{\tan(-5\pi/14)},\tan(5\pi/14)) &
	K &= (\textcolor{orange}{\tan(-\pi/14)},\tan(3\pi/14)) \\
	B &= (\textcolor{orange}{-\infty},+\infty) &
	G &= (\textcolor{plurp}{\tan(-3\pi/14)},\tan(5\pi/14)) &
	L &= (\textcolor{orange}{-\infty},\textcolor{orange}{\tan(-3\pi/14)}] \\
	C &= (\textcolor{orange}{\tan(-5\pi/14)},+\infty) &
	H &= (\textcolor{orange}{\tan(-\pi/14)},\tan(5\pi/14)) &
	M &= (\textcolor{orange}{-\infty},\tan(\pi/14)) \\
	D &= (\textcolor{plurp}{\tan(-3\pi/14)},+\infty) &
	I &= (\textcolor{orange}{\tan(-5\pi/14)},\tan(3\pi/14)) &
	N &= (\textcolor{plurp}{\tan(-5\pi/14)},\textcolor{plurp}{\tan(-\pi/14)}] \\
	E &= (\textcolor{orange}{-\infty},\tan(5\pi/14)) &
	J &= (\textcolor{plurp}{\tan(-3\pi/14)},\tan(3\pi/14)).
\end{align*}
}
The \textcolor{orange}{orange} highlights changes due to tilting.
The \textcolor{plurp}{purple} highlights a \emph{coincidental} fixed endpoint (but notice the change in open/closed).

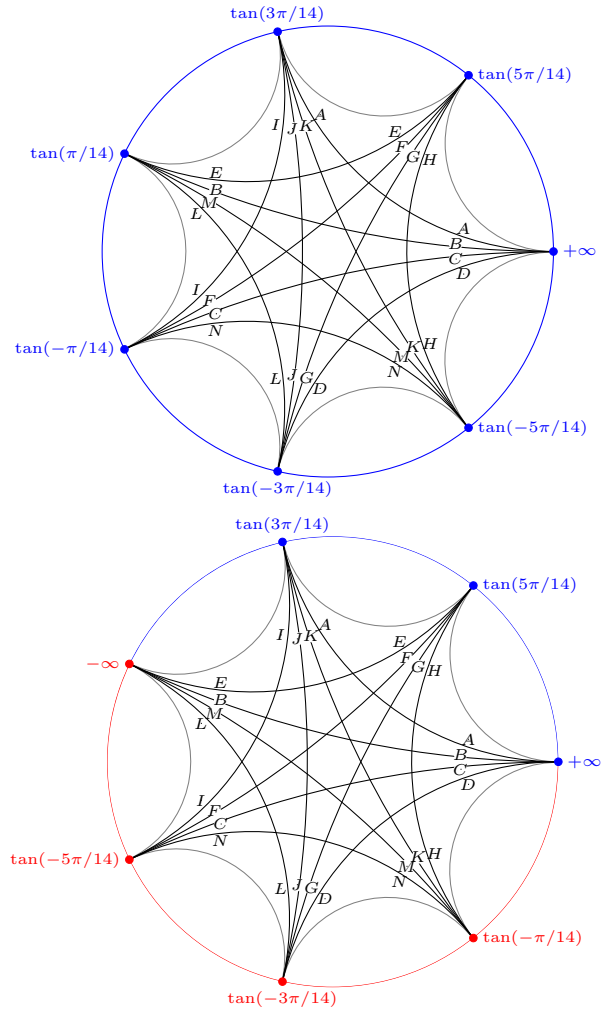
\begin{figure}[h]
\centering
\begin{tikzpicture}
	\begin{scope}
		\clip (3,0) arc (0:360:3);
		\foreach \x in 	{-154.28, -51.42, -102.85, 0, 51.42, 102.85, 154.28}
		{
			\draw [rotate around = {\x:(0,0)}, draw opacity = .5] (3,0) arc (270:-90:1.44);
			\draw [rotate around = {\x:(0,0)}] (3,0) arc (270:-90:3.76);
			\draw [rotate around = {\x:(0,0)}] (3,0) arc (270:-90:13.08);
		}
	\end{scope}
	\draw[blue] (3,0) arc (0:360:3);
	\foreach \x in {0, 51.42, 102.85, 154.28, -51.42, -102.85, -154.28}
	{
		\filldraw[fill=blue, draw=blue, rotate around = {\x:(0,0)}] (3,0) circle[radius=.5mm];
	}
	\draw[blue] (3,0) -- node[near start, right] {\tiny $+\infty$} (3,0);
	\draw[blue, rotate around = {51.42:(0,0)}] (3,0) -- node[near start, right] {\tiny $\tan(5\pi/14)$} (3,0);
	\draw[blue, rotate around = {102.85:(0,0)}] (3,0) -- node[near start, above] {\tiny $\tan(3\pi/14)$} (3,0);
	\draw[blue, rotate around = {154.28:(0,0)}] (3,0) -- node[near start, left] {\tiny $\tan(\pi/14)$} (3,0);
		\draw[blue, rotate around = {-154.28:(0,0)}] (3,0) -- node[near start, left] {\tiny $\tan(-\pi/14)$} (3,0);
	\draw[blue, rotate around = {-102.85:(0,0)}] (3,0) -- node[near start, below] {\tiny $\tan(-3\pi/14)$} (3,0);
	\draw[blue, rotate around = {-51.42:(0,0)}] (3,0) -- node[near start, right] {\tiny $\tan(-5\pi/14)$} (3,0);
	
	\foreach \x in {-154.28, -51.42, -102.85, 0, 51.42, 102.85, 154.28}
	{
		\filldraw[fill=white, draw opacity = 0, rotate around = {\x:(0,0)}] (1.7,.1) circle[radius=.9mm];
		\filldraw[fill=white, draw opacity = 0, rotate around = {\x:(0,0)}] (1.7,-.1) circle[radius=.9mm];
	}
	x	
	\draw (1.8,.3) -- node[near start] {\tiny $A$} (1.8,.3);
	\draw (1.7,.1) -- node[near start] {\tiny $B$} (1.7,.1);
	\draw (1.7,-.1) -- node[near start] {\tiny $C$} (1.7,-.1);
	\draw (1.8,-.3) -- node[near start] {\tiny $D$} (1.8,-.3);
	
	\draw[rotate around = {51.42:(0,0)}] (1.8,.3) -- node[near start] {\tiny $E$} (1.8,.3);
	\draw[rotate around = {51.42:(0,0)}] (1.7,.1) -- node[near start] {\tiny $F$} (1.7,.1);
	\draw[rotate around = {51.42:(0,0)}] (1.7,-.1) -- node[near start] {\tiny $G$} (1.7,-.1);
	\draw[rotate around = {51.42:(0,0)}] (1.8,-.3) -- node[near start] {\tiny $H$} (1.8,-.3);
	
	\draw[rotate around = {102.85:(0,0)}] (1.8,.3) -- node[near start] {\tiny $I$} (1.8,.3);
	\draw[rotate around = {102.85:(0,0)}] (1.7,.1) -- node[near start] {\tiny $J$} (1.7,.1);
	\draw[rotate around = {102.85:(0,0)}] (1.7,-.1) -- node[near start] {\tiny $K$} (1.7,-.1);
	\draw[rotate around = {102.85:(0,0)}] (1.8,-.3) -- node[near start] {\tiny $A$} (1.8,-.3);
	
	\draw[rotate around = {154.28:(0,0)}] (1.8,.3) -- node[near start] {\tiny $L$} (1.8,.3);
	\draw[rotate around = {154.28:(0,0)}] (1.7,.1) -- node[near start] {\tiny $M$} (1.7,.1);
	\draw[rotate around = {154.28:(0,0)}] (1.7,-.1) -- node[near start] {\tiny $B$} (1.7,-.1);
	\draw[rotate around = {154.28:(0,0)}] (1.8,-.3) -- node[near start] {\tiny $E$} (1.8,-.3);
	
	\draw[rotate around = {-154.28:(0,0)}] (1.8,.3) -- node[near start] {\tiny $N$} (1.8,.3);
	\draw[rotate around = {-154.28:(0,0)}] (1.7,.1) -- node[near start] {\tiny $C$} (1.7,.1);
	\draw[rotate around = {-154.28:(0,0)}] (1.7,-.1) -- node[near start] {\tiny $F$} (1.7,-.1);
	\draw[rotate around = {-154.28:(0,0)}] (1.8,-.3) -- node[near start] {\tiny $I$} (1.8,-.3);
	
	\draw[rotate around = {-102.85:(0,0)}] (1.8,.3) -- node[near start] {\tiny $D$} (1.8,.3);
	\draw[rotate around = {-102.85:(0,0)}] (1.7,.1) -- node[near start] {\tiny $G$} (1.7,.1);
	\draw[rotate around = {-102.85:(0,0)}] (1.7,-.1) -- node[near start] {\tiny $J$} (1.7,-.1);
	\draw[rotate around = {-102.85:(0,0)}] (1.8,-.3) -- node[near start] {\tiny $L$} (1.8,-.3);
	
	\draw[rotate around = {-51.42:(0,0)}] (1.8,.3) -- node[near start] {\tiny $H$} (1.8,.3);
	\draw[rotate around = {-51.42:(0,0)}] (1.7,.1) -- node[near start] {\tiny $K$} (1.7,.1);
	\draw[rotate around = {-51.42:(0,0)}] (1.7,-.1) -- node[near start] {\tiny $M$} (1.7,-.1);
	\draw[rotate around = {-51.42:(0,0)}] (1.8,-.3) -- node[near start] {\tiny $N$} (1.8,-.3);
\end{tikzpicture}

\begin{tikzpicture}
	\begin{scope}
		\clip (3,0) arc (0:360:3);
		\foreach \x in 	{-154.28, -51.42, -102.85, 0, 51.42, 102.85, 154.28}
		{
			\draw [rotate around = {\x:(0,0)}, draw opacity = .5] (3,0) arc (270:-90:1.44);
			\draw [rotate around = {\x:(0,0)}] (3,0) arc (270:-90:3.76);
			\draw [rotate around = {\x:(0,0)}] (3,0) arc (270:-90:13.08);
		}
		\draw[blue] (3,0) arc (0:154.82:3);
		\draw[red] (3,0) arc (0:-205.71:3);
	\end{scope}
	\foreach \x in {0, 51.42, 102.85}
		\filldraw[fill=blue, draw=blue, rotate around = {\x:(0,0)}] (3,0) circle[radius=.5mm];
	\foreach \x in {-154.28, -51.42, -102.85, 154.28}
		\filldraw[fill=red, draw=red, rotate around = {\x:(0,0)}] (3,0) circle[radius=.5mm];
		\draw[blue] (3,0) -- node[near start, right] {\tiny $+\infty$} (3,0);
	\draw[blue, rotate around = {51.42:(0,0)}] (3,0) -- node[near start, right] {\tiny $\tan(5\pi/14)$} (3,0);
	\draw[blue, rotate around = {102.85:(0,0)}] (3,0) -- node[near start, above] {\tiny $\tan(3\pi/14)$} (3,0);
	\draw[red, rotate around = {154.28:(0,0)}] (3,0) -- node[near start, left] {\tiny $-\infty$} (3,0);
		\draw[red, rotate around = {-154.28:(0,0)}] (3,0) -- node[near start, left] {\tiny $\tan(-5\pi/14)$} (3,0);
	\draw[red, rotate around = {-102.85:(0,0)}] (3,0) -- node[near start, below] {\tiny $\tan(-3\pi/14)$} (3,0);
	\draw[red, rotate around = {-51.42:(0,0)}] (3,0) -- node[near start, right] {\tiny $\tan(-\pi/14)$} (3,0);
	
	\foreach \x in {-154.28, -51.42, -102.85, 0, 51.42, 102.85, 154.28}
	{
		\filldraw[fill=white, draw opacity = 0, rotate around = {\x:(0,0)}] (1.7,.1) circle[radius=.9mm];
		\filldraw[fill=white, draw opacity = 0, rotate around = {\x:(0,0)}] (1.7,-.1) circle[radius=.9mm];
	}
	
	\draw (1.8,.3) -- node[near start] {\tiny $A$} (1.8,.3);
	\draw (1.7,.1) -- node[near start] {\tiny $B$} (1.7,.1);
	\draw (1.7,-.1) -- node[near start] {\tiny $C$} (1.7,-.1);
	\draw (1.8,-.3) -- node[near start] {\tiny $D$} (1.8,-.3);
	
	\draw[rotate around = {51.42:(0,0)}] (1.8,.3) -- node[near start] {\tiny $E$} (1.8,.3);
	\draw[rotate around = {51.42:(0,0)}] (1.7,.1) -- node[near start] {\tiny $F$} (1.7,.1);
	\draw[rotate around = {51.42:(0,0)}] (1.7,-.1) -- node[near start] {\tiny $G$} (1.7,-.1);
	\draw[rotate around = {51.42:(0,0)}] (1.8,-.3) -- node[near start] {\tiny $H$} (1.8,-.3);
	
	\draw[rotate around = {102.85:(0,0)}] (1.8,.3) -- node[near start] {\tiny $I$} (1.8,.3);
	\draw[rotate around = {102.85:(0,0)}] (1.7,.1) -- node[near start] {\tiny $J$} (1.7,.1);
	\draw[rotate around = {102.85:(0,0)}] (1.7,-.1) -- node[near start] {\tiny $K$} (1.7,-.1);
	\draw[rotate around = {102.85:(0,0)}] (1.8,-.3) -- node[near start] {\tiny $A$} (1.8,-.3);
	
	\draw[rotate around = {154.28:(0,0)}] (1.8,.3) -- node[near start] {\tiny $L$} (1.8,.3);
	\draw[rotate around = {154.28:(0,0)}] (1.7,.1) -- node[near start] {\tiny $M$} (1.7,.1);
	\draw[rotate around = {154.28:(0,0)}] (1.7,-.1) -- node[near start] {\tiny $B$} (1.7,-.1);
	\draw[rotate around = {154.28:(0,0)}] (1.8,-.3) -- node[near start] {\tiny $E$} (1.8,-.3);
	
	\draw[rotate around = {-154.28:(0,0)}] (1.8,.3) -- node[near start] {\tiny $N$} (1.8,.3);
	\draw[rotate around = {-154.28:(0,0)}] (1.7,.1) -- node[near start] {\tiny $C$} (1.7,.1);
	\draw[rotate around = {-154.28:(0,0)}] (1.7,-.1) -- node[near start] {\tiny $F$} (1.7,-.1);
	\draw[rotate around = {-154.28:(0,0)}] (1.8,-.3) -- node[near start] {\tiny $I$} (1.8,-.3);
	
	\draw[rotate around = {-102.85:(0,0)}] (1.8,.3) -- node[near start] {\tiny $D$} (1.8,.3);
	\draw[rotate around = {-102.85:(0,0)}] (1.7,.1) -- node[near start] {\tiny $G$} (1.7,.1);
	\draw[rotate around = {-102.85:(0,0)}] (1.7,-.1) -- node[near start] {\tiny $J$} (1.7,-.1);
	\draw[rotate around = {-102.85:(0,0)}] (1.8,-.3) -- node[near start] {\tiny $L$} (1.8,-.3);
	
	\draw[rotate around = {-51.42:(0,0)}] (1.8,.3) -- node[near start] {\tiny $H$} (1.8,.3);
	\draw[rotate around = {-51.42:(0,0)}] (1.7,.1) -- node[near start] {\tiny $K$} (1.7,.1);
	\draw[rotate around = {-51.42:(0,0)}] (1.7,-.1) -- node[near start] {\tiny $M$} (1.7,-.1);
	\draw[rotate around = {-51.42:(0,0)}] (1.8,-.3) -- node[near start] {\tiny $N$} (1.8,-.3);
\end{tikzpicture}
\caption{$\mathbb{A}_4$ example -- arcs in $\mathfrak{h}^2$. Continuous tilting doesn't move arcs in the hyperbolic plane. We can see this by relabeling the boundary of $\hyper$ accordingly. We also see how the diagonals of the heptagon (which models the cluster combinatorics for $\mathcal{C}(Q)$ and $\mathcal{C}(Q')$) are preserved by $\overline{F}$.}
\label{fig:finite embedding in hyperbolic plane}
\end{figure}

\section*{Future Work}
There are a few questions that naturally arise from our results.
What is the connection between our tilting and the reflection functors introduced in \cite{LZ22}?
What if we considered \emph{all} modules over a continuous quiver of type $\mathbb{A}$, instead of just those that are representable.
Can we expand Section~\ref{sec:continuous cluster character} and describe a continuous cluster algebra?
The authors plan to explore some of these questions in future research.
 
There is still much work to do with general continuous stability, as well. What can we learn by studying measured laminations of other surfaces? For example, can we connect a continuous type $\mathbb{D}$ quiver to measured laminations of the punctured (Poincar\'e) disk?
In the present paper, we consider stability conditions in the sense of King.
What about other kinds of stability conditions?
Furthermore, can the connections between stability conditions and moduli spaces be generalized to the continuous case?

\bibliographystyle{abbrv}
\bibliography{stability.bib}

\begin{thebibliography}{10}

\bibitem{abeasis}
S.~Abeasis.
\newblock Codimension 1 orbits and semi-invariants for the representations of
  an oriented graph of type {$A_n$}.
\newblock {\em Transactions of the American Mathematical Society},
  282(2):463--485, 1984.

\bibitem{A21}
S.~Asai.
\newblock The wall-chamber structures of the real {G}rothendieck groups.
\newblock {\em Adv. Math.}, 381, 2021.

\bibitem{APR}
M.~Auslander, M.~I. Platzeck, and I.~Reiten.
\newblock Coxeter functors without diagrams.
\newblock {\em Transactions of the American Mathematical Society}, 250:1--46,
  1979.

\bibitem{BC-B20}
M.~B. Botnan and W.~Crawley-Boevey.
\newblock Decomposition of persistence modules.
\newblock {\em Proc. Amer. Math. Soc.}, 148(11):4581--4596, 2020.

\bibitem{BMRRT}
A.~B. Buan, B.~R. Marsh, M.~Reineke, I.~Reiten, and G.~Todorov.
\newblock Tilting theory and cluster combinatorics.
\newblock {\em Advances in mathematics}, 204(2):572--618, 2006.

\bibitem{BST19}
T.~B\"ustle, D.~Smith, and H.~Treffinger.
\newblock Wall and chamber structure for finite-dimensional algebras.
\newblock {\em Adv. Math.}, 354, 2019.

\bibitem{CCC}
P.~Caldero and F.~Chapoton.
\newblock Cluster algebras as hall algebras of quiver representations.
\newblock {\em Commentarii Mathematici Helvetici}, 81(3):595--616, 2006.

\bibitem{FZ1}
S.~Fomin and A.~Zelevinsky.
\newblock Cluster algebras {I}: foundations.
\newblock {\em Journal of the American mathematical society}, 15(2):497--529,
  2002.

\bibitem{I20}
K.~Igusa.
\newblock Linearity of stability conditions.
\newblock {\em Communications in Algebra}, 48(4):1671--1696, 2020.

\bibitem{IRT22}
K.~Igusa, J.~D. Rock, and G.~Todorov.
\newblock Continuous quivers of type {$A$} ({I}) foundations.
\newblock {\em Rendiconti del Circolo Matematico di Palermo Series 2}, 2022.

\bibitem{IRT22b}
K.~Igusa, J.~D. Rock, and G.~Todorov.
\newblock Continuous quivers of type {$A$} ({III}) embeddings of cluster
  theories.
\newblock {\em Nagoya Mathematical Journal}, 2022.

\bibitem{IT15}
K.~Igusa and G.~Todorov.
\newblock Continuous cluster categories {I}.
\newblock {\em Algebras and Reresentation Theory}, 2015.

\bibitem{K94}
A.~D. King.
\newblock Moduli of representations of finite dimensional algebras.
\newblock {\em QJ Math}, 45(2):515--530, 1994.

\bibitem{LZ22}
Y.~Liu and M.~Zhao.
\newblock Reflection functors for continuous quivers of type a.
\newblock {\em arXiv:2208.07112 [math.RT]}, 2022.
\newblock Preprint.

\bibitem{maresca5}
R.~Maresca.
\newblock Five lectures on cluster theory.
\newblock {\em arXiv:2210.05717 [math.RT]}, 2022.
\newblock Preprint.

\bibitem{R19+}
J.~D. Rock.
\newblock Continuous quivers of type {$A$} ({II}).
\newblock {\em arXiv:1910.04140 [math.RT]}, 2019.
\newblock Preprint.

\bibitem{R22}
J.~D. Rock.
\newblock Continuous quivers of type {$A$} ({IV}).
\newblock {\em Algebras and Representation Theory}, 2022.

\end{thebibliography}

\end{document}